\documentclass[psamsfonts]{amsart}

\usepackage{amssymb,amsfonts}
\usepackage[all,arc]{xy}
\usepackage{enumerate}
\usepackage[margin=1.2in]{geometry}
\usepackage{mathrsfs}
\usepackage{hyperref}
\hypersetup{
pdftitle={},
pdfsubject={},
pdfauthor={Kolosov Petro},
pdfkeywords={}
}

\usepackage[usenames,dvipsnames]{xcolor}

\usepackage{bm}
\usepackage{pgfplots}
\usepackage{verbatim}

\numberwithin{equation}{section}
\newtheorem{thm}{Theorem}[section]
\newtheorem{cor}[thm]{Corollary}
\newtheorem{prop}[thm]{Proposition}
\newtheorem{lem}[thm]{Lemma}
\newtheorem{conj}[thm]{Conjecture}
\newtheorem{quest}[thm]{Question}

\theoremstyle{definition}
\newtheorem{defn}[thm]{Definition}

\theoremstyle{remark}
\newtheorem{rem}[thm]{Remark}

\newcommand{\note}[1]{\textbf{\textcolor{blue}{#1}} \\ \\}


\DeclareMathOperator{\Ric}{Ric}

\DeclareMathOperator{\arcsinh}{arcsinh}
\DeclareMathOperator{\Hess}{Hess}
\DeclareMathOperator{\tn}{tn}
\DeclareMathOperator{\cs}{cs}
\DeclareMathOperator{\sn}{sn}

\bibliographystyle{plain}

\title[Spectral Gap Estimates on conformally flat Manifolds ]{Spectral Gap Estimates on Conformally Flat Manifolds}
\author{Gabriel Khan}\address[Gabriel Khan]{Department of Mathematics, Iowa State University, Ames, IA, USA.}
  \email{gkhan@iastate.edu}  
\author{Malik Tuerkoen}
\address[Malik Tuerkoen]{Department of Mathematics, University of California,  Santa Barbara, CA, USA.}
  \email{mmtuerkoen@ucsb.edu}
\begin{document}
\maketitle
\begin{abstract}
The fundamental gap is the difference between the first two Dirichlet eigenvalues of a Schr\"odinger operator (and the Laplacian, in particular). For horoconvex domains in hyperbolic space, Nguyen, Stancu and Wei conjectured that it is possible to obtain a lower bound on the fundamental gap in terms of the diameter of the domain and the dimension \cite{nguyen2022fundamental}. In this article, we prove this conjecture by establishing conformal log-concavity estimates for the first eigenfunction. This builds off earlier work by the authors and Saha \cite{khan2024concavity} as well as recent work by Cho, Wei and Yang \cite{cho2023probabilistic}. We also prove spectral gap estimates for a more general class of problems on conformally flat manifolds and investigate the relationship between the gap and the inradius. For example, we establish gap estimates for domains in $\mathbb{S}^1 \times \mathbb{S}^{N-1}$ which are convex with respect to the universal affine cover.
\end{abstract}

\section{Introduction}

The goal of this paper is to establish lower bounds on the fundamental gap of sufficiently convex domains in conformally flat manifolds, with a particular focus on horoconvex domains in hyperbolic space. Given a bounded domain $\Omega$ in a Riemannian manifold $(M,g)$, we consider the Dirichlet eigenvalue problem
\begin{eqnarray}\label{Dirichlet-Laplacian}
    -\Delta _g u = \lambda(\Omega) u \quad\textup{in }\Omega, \quad \quad u = 0 \quad\textup{on }\partial \Omega,
\end{eqnarray}
where $\Delta_g$ is the Laplace operator.
There is a discrete sequences of eigenvalues 
$$0<\lambda_1(\Omega) \leq \lambda_2(\Omega)\leq \cdots \leq \lambda_k(\Omega)\leq \cdots\rightarrow +\infty,$$
repeated with multiplicity.
The \emph{fundamental gap} is the difference between the first and second eigenvalues
\begin{align*}
    \Gamma(\Omega) = \lambda_2(\Omega)-\lambda_1(\Omega)>0,
\end{align*}
which is non-zero whenever the domain is connected. For convex domains in $\mathbb R^N,$
Andrews and Clutterbuck \cite{andrews2011proof} proved that the fundamental gap of the Schr\"odinger operator $-\Delta + V$ is greater than the Dirichlet gap of a one-dimensional model: 
\begin{align*}
    \Gamma(\Omega, V) \geq \overline\lambda_2(\overline V) - \overline \lambda_1(\overline V).
\end{align*}
Here, $\overline \lambda_i$ is the $i$-th eigenvalue of the operator $\mathcal L = -\tfrac{d^2}{ds^2}+\overline V$ on $[-D/2,D/2],$ and $\overline V'$ is a \emph{modulus of convexity} for $V$. In other words, for any $x,y\in \Omega$, we have that 
\begin{align*}
    \langle \nabla V(y), \gamma_{x,y}'(\tfrac {d(x,y)}{2})\rangle -  \langle \nabla V(x), \gamma_{x,y}'(\tfrac {-d(x,y)}{2})\rangle \geq 2 \overline V'(\tfrac{d(x,y)}{2}), 
\end{align*}
 where $\gamma_{x,y}$ is the minimizing geodesic such that $\gamma_{x,y}(\tfrac{-d(x,y)}{2}) = x$ and $\gamma_{x,y}(\tfrac{d(x,y)}{2}) = y.$

There has been a large amount of work studying the fundamental gap in other geometries as well.
For convex regions on the sphere $\mathbb S^N$, Dai, He, Seto, Wang and Wei (in various subsets) proved the analogous result to the Andrews-Clutterbuck theorem
\cite{10.4310/jdg/1559786428, He-Wei2020, Dai-Seto-Wei2021} (see also \cite{lee1987estimate, wang2000estimation} for earlier work). Recently the authors along with Nguyen, Saha and Wei (in various subsets) made progress in obtaining spectral gap estimates for deformations of round spheres in dimension two \cite{surfacepaper1,khan2023modulus} and for conformal deformations of the sphere in all dimensions \cite{khan2024concavity}.

 For spaces of negative curvature, the behavior of the fundamental gap of convex domains can be remarkably different compared to flat or spherical geometry. In fact, Bourni et al. \cite{bourni2022vanishing} showed that for any $D>0$ there are convex domains $\Omega$ of any diameter $D$ in hyperbolic $N$-space $\mathbb{H}^N$   whose spectral gap is arbitrarily small. The first-named author and Nguyen then extended this result to spaces of mixed sectional curvature (for small diameter) and negatively pinched curvature for arbitrary diameter \cite{khan2022negative}.

Since assuming convexity is not enough to ensure uniform lower bounds on the fundamental gap in such geometries, it is natural to consider domains which satisfy stronger convexity assumptions. In particular, Nguyen, Stancu and Wei  \cite{nguyen2022fundamental} conjectured that there is a lower bound on the fundamental gap of domains which are \emph{horoconvex} (see Definition \ref{Def: horoconvexity} for the definition of horoconvexity).

\begin{conj}\label{conjecture: horoconvex-conjecture}
    There exists a constant $c(N,D)>0,$ such that for any horoconvex domain $\Omega \subset \mathbb H^N$, the following inequality holds:
    \begin{align*}
        \Gamma(\Omega) \geq c(N,D).
    \end{align*}
\end{conj}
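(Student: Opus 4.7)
The plan is to exploit the fact that $\mathbb{H}^N$ is conformally flat by working in the Poincar\'e ball model, so that $g_{\mathbb{H}} = e^{2\phi} g_0$ on $B_1 \subset \mathbb{R}^N$ with $e^{\phi} = 2/(1-|x|^2)$ and $g_0$ the Euclidean metric. In this model, horoballs tangent to $\partial B_1$ correspond to Euclidean balls tangent at the same point, so a horoconvex domain $\Omega \subset \mathbb{H}^N$ is a Euclidean-convex subset of $B_1$, and the hyperbolic diameter bound $D$ translates into a quantitative constraint on the Euclidean size and position of $\Omega$. The Dirichlet problem \eqref{Dirichlet-Laplacian} rewrites on $\Omega \subset \mathbb{R}^N$ as a weighted Schr\"odinger-type problem $-\Delta_0 u - (N-2)\nabla\phi \cdot \nabla u = \lambda e^{2\phi} u$, which after a substitution of the form $v = e^{(N-2)\phi/2} u$ becomes $-\Delta_0 v + V v = \lambda e^{2\phi} v$ with an explicit potential $V$ built from $\phi$.

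The central technical step is a \emph{conformal log-concavity} estimate for the first eigenfunction: there exists a quantitative modulus $\psi = \psi_{N,D}$ such that $\log u_1 - a\phi$ is concave in the Euclidean metric with modulus $\psi'$, for an appropriate constant $a$. I would establish this by running a two-point maximum principle on
\[
Z(x,y) = \log u_1(y) - \log u_1(x) - a\bigl(\phi(y) - \phi(x)\bigr) - 2\psi\bigl(\tfrac{|y-x|}{2}\bigr)
\]
over $\Omega \times \Omega$, following the Andrews--Clutterbuck framework \cite{andrews2011proof} and its conformal adaptations in \cite{khan2024concavity} and \cite{cho2023probabilistic}. The constant $a$ and the ODE satisfied by $\psi$ are chosen so that the interior inequality closes, while Euclidean convexity of $\Omega$ (inherited from horoconvexity) handles the boundary case in the usual way.

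Once the modulus is in hand, the fundamental gap estimate follows from comparison with a one-dimensional Schr\"odinger operator $\mathcal{L} = -\tfrac{d^2}{ds^2} + \overline V$ on $[-D/2, D/2]$, whose potential $\overline V$ is built from $\phi$ and $\psi$: the Andrews--Clutterbuck comparison gives
\[
\Gamma(\Omega) \;\geq\; \overline\lambda_2(\overline V) - \overline\lambda_1(\overline V) \;=:\; c(N,D) \;>\; 0.
\]

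The main obstacle is producing the quantitative conformal log-concavity modulus. In the negatively curved setting the naive two-point computation carries a bad-sign curvature term, and the conformal correction $a\phi$ must be tuned precisely so that the positive Hessian of $\phi$ (large near $\partial B_1$) absorbs it. Horoconvexity enters twice: it yields Euclidean convexity so the two-point argument sees a convex domain, and it forces $\Omega$ to stay uniformly away from $\partial B_1$ in a controlled way, so that the conformal factor along $\Omega$ remains bounded in terms of $N$ and $D$. Showing that the resulting ODE for $\psi$ admits a positive, non-degenerate solution on $[0,D/2]$ with the required modulus is the principal technical hurdle; this is where both horoconvexity and the dimension/diameter dependence of $c(N,D)$ are ultimately encoded.
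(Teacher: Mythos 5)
Your overall architecture matches the paper's: pass to the Poincar\'e ball model, observe that horoconvex domains are Euclidean-convex there, rewrite the problem as a weighted Schr\"odinger equation $-\Delta_0 v + Vv = \lambda e^{2\phi}v$, prove a two-point log-concavity modulus for the first eigenfunction, and compare with a one-dimensional model. (The paper runs the two-point argument via coupled diffusions rather than a maximum principle, but it notes the maximum-principle route works too, and it does not use your ansatz of correcting by $a\phi$; it instead treats $e^{2\phi}$ as a weight with its own modulus of concavity.)

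There is, however, a genuine gap in how you claim horoconvexity is used, and it is precisely the point on which the whole argument turns. You say horoconvexity enters by (i) giving Euclidean convexity and (ii) keeping $\Omega$ away from $\partial B_1$ so the conformal factor is controlled by $N$ and $D$. Both of those properties are shared by arbitrary Euclidean-convex domains of bounded hyperbolic diameter in the disk model — and for those the conjecture is \emph{false}: thin Euclidean rectangles in the disk have fixed diameter but arbitrarily small fundamental gap (this is exactly the content of the paper's Appendix A, adapting \cite{khan2022negative}). The reason your ODE for $\psi$ cannot close with constants depending only on $(N,D)$ under your stated hypotheses is that the weight $\rho = e^{2\phi}$ is \emph{convex}, so its modulus-of-concavity contribution enters the two-point inequality with the wrong sign multiplied by $\lambda_1(\Omega)$; compensating it forces the one-dimensional potential $\overline V$ (and hence $c$) to depend on $\lambda_1(\Omega)$, which is not controlled by the diameter alone for Euclidean-convex domains. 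The missing ingredient is Borisenko's theorem: horoconvexity forces the \emph{inradius} to be bounded below by an explicit function of the diameter, which yields an upper bound $\lambda_1(\Omega) \leq \lambda_1(B_r, \Delta_{\mathbb H^N})$ depending only on $N$ and $D$. Only after this step can $\overline V$, and therefore $c(N,D)$, be made to depend solely on the dimension and the diameter. Without some quantitative input of this kind that distinguishes horoconvex domains from thin convex slabs, the "principal technical hurdle" you identify (a positive solution $\psi$ on $[0,D/2]$ with constants depending only on $N,D$) is not merely hard — it is unattainable.
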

In recent work \cite{khan2024concavity}, the authors along with Saha showed that for $N  = 2$ and $D \leq 2  \textup{arccsch}(2 \sqrt{11/3} )$, the gap satisfies $\Gamma(\Omega) \geq\tfrac{32}{3 (7 + \sqrt{33})} \tfrac{\pi^2}{ D^2} + \tfrac{4}{3}.$ Therefore, the conjecture holds in two-dimensions when the diameter is not too large.
    
In this article, we drop the restrictions on the the dimension and the diameter in order to prove the general conjecture. 
\begin{thm} \label{The horoconvexity theorem}
    Conjecture \ref{conjecture: horoconvex-conjecture} holds.
\end{thm}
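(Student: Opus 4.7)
The plan is to reduce the problem on $\mathbb{H}^N$ to one on a Euclidean convex domain by exploiting conformal flatness, and then to extend the conformal log-concavity technique developed for the round sphere in \cite{khan2024concavity} to the hyperbolic setting.

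First I would work in the Poincar\'e ball model, where the hyperbolic metric satisfies $g_{\mathbb{H}} = \phi^2 g_{\mathrm{Euc}}$ with $\phi(x) = 2/(1-|x|^2)$. In this model a horoball is exactly a Euclidean ball internally tangent to the unit sphere, so a horoconvex domain $\Omega$ pulls back to a Euclidean convex set $\tilde\Omega \subset \mathbb{B}^N$ that is bounded away from $\partial \mathbb{B}^N$. If $u_1$ is the first Dirichlet eigenfunction of $-\Delta_{g_{\mathbb{H}}}$, then after a suitable power rescaling $v = \phi^{\alpha} u_1$ the function $v$ vanishes on $\partial \tilde\Omega$ and solves a linear elliptic equation of the form $-\Delta_E v + W v = \lambda_1 \phi^2 v$ on $\tilde\Omega$, with a potential $W$ that is smooth and explicit in $\phi$ and its derivatives.

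The heart of the argument is to establish a conformal log-concavity estimate for $v$: that $\log v$ inherits a modulus of concavity comparable to that of a one-dimensional model eigenfunction on $[-D/2, D/2]$. Following Andrews-Clutterbuck together with the refinements in \cite{khan2024concavity} and \cite{cho2023probabilistic}, I would introduce the two-point function
\[
Z(x,y) = \log v(y) - \log v(x) - 2\,\overline{\varphi}\!\left(\tfrac{d_E(x,y)}{2}\right)
\]
on $\tilde\Omega\times \tilde\Omega$, where $d_E$ is the Euclidean distance and $\overline\varphi$ is the logarithmic derivative of the appropriate one-dimensional model. If $Z$ attains a positive interior maximum at $(x_0,y_0)$, the transformed eigenvalue equation together with Euclidean convexity of $\tilde\Omega$ should produce a differential inequality along the chord from $x_0$ to $y_0$ that contradicts the defining ODE for $\overline\varphi$. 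A second two-point argument then compares $\lambda_2(\Omega)-\lambda_1(\Omega)$ with the explicit Dirichlet gap of $-\tfrac{d^2}{ds^2} + \overline V$ on $[-D/2, D/2]$, which is a strictly positive quantity depending only on $N$ and $D$.

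The main obstacle is closing that differential inequality, since several terms flip sign relative to the spherical case. Here $\phi = 2/(1-|x|^2)$ is log-\emph{convex} on the ball, so the gradient contributions involving $\nabla\log\phi$ enter the $Z$-inequality with an unfavorable sign and, without further input, would swamp the positive contribution coming from the eigenvalue. Horoconvexity must be used quantitatively at exactly this point: each supporting horosphere of $\Omega$ corresponds in the ball model to a Euclidean sphere tangent to $\partial\mathbb{B}^N$, and this geometric constraint yields uniform bounds on how $|\nabla\log\phi|$ and $\mathrm{Hess}\log\phi$ can grow along chords of $\tilde\Omega$. The delicate step will be choosing $\alpha$ and $\overline V$ so that these horoconvex bounds exactly absorb the bad terms and leave a nontrivial modulus of log-concavity; once this succeeds, the estimate $\Gamma(\Omega)\geq c(N,D)>0$ follows from the standard one-dimensional comparison.
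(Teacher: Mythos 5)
Your overall reduction is the same as the paper's: pass to the Poincar\'e ball model, where horoconvex domains are Euclidean-convex and the eigenvalue problem becomes a weighted Schr\"odinger problem $-\Delta_E v + Wv = \lambda_1 e^{2\varphi} v$ on a Euclidean-convex set, then run a two-point log-concavity argument against a one-dimensional model and conclude by a gap comparison. That skeleton is correct. The gap is in the last paragraph, where you identify \emph{where} horoconvexity enters quantitatively.

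You propose to use horoconvexity to get ``uniform bounds on how $|\nabla\log\phi|$ and $\mathrm{Hess}\,\log\phi$ can grow along chords of $\tilde\Omega$.'' Those bounds are not the missing ingredient: for \emph{any} convex domain in the disk model, the circumradius is controlled by the diameter (a Jung-type theorem of Dekster), so the domain sits in a Euclidean ball of radius $\tanh(R/2)<1$ and the conformal factor together with all its derivatives is already bounded in terms of $D$ and $N$ alone. If such bounds sufficed to close the differential inequality, the theorem would hold for all Euclidean-convex domains in the disk model with a constant depending only on $(N,D)$ --- and the paper's appendix shows this is false: thin Euclidean rectangles in the disk have fixed diameter but arbitrarily small fundamental gap. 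The unfavorable term you correctly flag (coming from the convexity of the weight $\rho = e^{2\varphi}$) appears multiplied by $\lambda_1(\Omega)$, so the compensating potential $\overline V$ in the one-dimensional model must be taken proportional to $\lambda_1(\Omega)$; the resulting gap bound therefore a priori degenerates as $\lambda_1\to\infty$, i.e.\ as the inradius shrinks. The essential quantitative use of horoconvexity is a theorem of Borisenko giving a \emph{lower bound on the inradius in terms of the diameter} for horoconvex domains; combined with an upper bound for the first Dirichlet eigenvalue of hyperbolic balls, this yields $\lambda_1(\Omega)\le C(N,D)$ and hence lets $\overline V$, and the final constant, depend only on $N$ and $D$. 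Without this step (or an equivalent substitute controlling $\lambda_1$ by the diameter), your argument cannot produce a constant $c(N,D)$ and the proof does not close.
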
 

Apart from this question, we are able to obtain fundamental gap estimates for domains in conformally flat manifolds which are convex with respect to a flat connection. 
\begin{thm}\label{thm: Gap-Estimate-for-conformally-flat-manifolds}
    Suppose that $\Omega \subset (M, g = e^{2\varphi}g_{\textup{E}})$ is a convex domain with respect to Euclidean geometry, where $g_{\textup{E}}$ denotes the Euclidean metric. Then the fundamental gap of $\Omega$ satisfies \begin{align}
        \Gamma(\Omega, \Delta_g) \geq \frac{\min \overline \rho}{\max_\Omega \exp(2\varphi)}\overline \Gamma(\overline \rho, \overline V)-\frac{N-2}{4(N-1)}\left(\max_\Omega R_g -\min_\Omega R_g\right),
    \end{align}
    where $R_g$ denotes the scalar curvature and $\overline \rho$ and $\overline V$ which can be solved explicitly in terms of $\varphi$ and $\lambda_1(\Omega)$.
\end{thm}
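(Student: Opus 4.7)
The strategy is to pass from the Laplace--Beltrami operator to the conformally covariant conformal Laplacian, and then exploit its explicit covariance to reduce the problem to a weighted Dirichlet eigenvalue problem on the underlying convex Euclidean domain. As a first step, introduce $L_g = -\Delta_g + c_N R_g$ with $c_N = \tfrac{N-2}{4(N-1)}$. Since $c_N R_g$ is a bounded multiplicative perturbation of $-\Delta_g$, the min--max principle gives
\[
\Gamma(\Omega, \Delta_g) \;\geq\; \Gamma(\Omega, L_g) - c_N\bigl(\max_\Omega R_g - \min_\Omega R_g\bigr),
\]
which already produces the negative error term in the theorem. It therefore suffices to prove a matching lower bound on $\Gamma(\Omega, L_g)$.

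The second step uses the classical conformal covariance of $L_g$ together with $R_{g_\textup{E}} = 0$. The substitution $v = e^{(N-2)\varphi/2} u$ converts the eigenvalue equation $L_g u = \mu u$ on $\Omega \subset (M,g)$ into the weighted Euclidean problem
\[
-\Delta_{g_\textup{E}} v \;=\; \mu\,\rho\, v \quad \text{in } \Omega, \qquad v = 0 \text{ on } \partial\Omega,
\]
with weight $\rho = e^{2\varphi}$. Because $u \mapsto v$ is an invertible positive rescaling, the full Dirichlet spectrum is preserved together with its multiplicities, so $\Gamma(\Omega, L_g)$ equals the fundamental gap of this weighted Euclidean problem on the (by hypothesis) Euclidean-convex domain $\Omega$.

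For the final step, I would apply a conformal log-concavity gap comparison of the type developed in our earlier work with Saha \cite{khan2024concavity}. The plan is to construct a one-dimensional model on $[-D/2, D/2]$ whose weight $\overline\rho$ and potential $\overline V$ are determined explicitly by $\varphi$ and $\lambda_1(\Omega)$ through an auxiliary ODE, and to show that along every Euclidean segment joining antipodal points of $\Omega$, the first eigenfunction $v_1$ of the weighted problem is at least as log-concave as its one-dimensional counterpart $\overline v_1$. A standard two-point maximum principle argument applied to $v_2/v_1$ then yields
\[
\mu_2 - \mu_1 \;\geq\; \frac{\min \overline\rho}{\max_\Omega \rho}\, \overline\Gamma(\overline\rho, \overline V),
\]
the scalar prefactor arising from comparing the weighted and unweighted Rayleigh quotients in the two settings. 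Combined with the first reduction, this gives the claimed bound.

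The main obstacle is the conformal log-concavity estimate itself. Running the two-point maximum principle requires showing that the elliptic inequality satisfied by $\Hess(\log v_1)$ along Euclidean segments is dominated by the corresponding one-dimensional inequality, and this closure imposes that $\overline\rho$ and $\overline V$ be chosen as solutions of a carefully calibrated auxiliary ODE involving $\varphi$ and $\lambda_1(\Omega)$. The Euclidean convexity of $\Omega$ enters here to supply the geometric term that absorbs the residual sign, exactly as in \cite{khan2024concavity}; the novelty and bulk of the work is in tracking the dependence on the general conformal factor $\varphi$ rather than on the specific hyperbolic or spherical factors treated previously.
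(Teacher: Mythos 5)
Your proposal follows essentially the same route as the paper: reduce to the weighted Euclidean problem $-\Delta_{g_{\textup{E}}} v = \mu e^{2\varphi} v$ via the conformal change (the paper does this in Proposition \ref{prop: conformally-flat-gap-scalarcurvature}, paying the $\tfrac{N-2}{4(N-1)}\textup{osc}_\Omega(R_g)$ error exactly as you do), then apply the log-concavity/gap comparison of Theorem \ref{main-thm} with a one-dimensional model whose weight $\overline\rho$ and potential $\overline V$ are calibrated so that condition \eqref{condition-2} holds, which is what forces the dependence on $\lambda_1(\Omega)$. The only cosmetic differences are that the paper proves the log-concavity estimate by coupled diffusions rather than the two-point maximum principle (it notes both work), and that the explicit choices are simpler than an auxiliary ODE: $\overline\rho(s) = \sigma_{\max}\tfrac{s^2}{2}+C$ with $\sigma_{\max}$ the positive part of the largest Hessian eigenvalue of $e^{2\varphi}$, and $\overline V$ a constant taken large enough that $\overline\lambda(\overline\rho,\overline V)\geq\lambda_1$.
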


It is worthwhile to compare this result to those Oden-Song-Wang and Ramos et al. \cite{oden1999spectral,ramos2023integral}, which establish lower bounds on the fundamental gap for domains which satisfy an interior rolling ball condition. However, this is a comparatively restrictive class of domains and requires that the boundary is uniformly $C^2$. On the other hand, our estimates require conformal flatness but only depend on the diameter, conformal factor (and possibly the inradius).

The strategy to prove these results is to study the eigenfunctions of the problem \eqref{Dirichlet-Laplacian} with respect to Euclidean metric, using a conformal deformation. More generally, if $\tilde g = e^{2\varphi}g$ and $\tilde u$ is a $\tilde g$ Dirichlet eigenfunction, then $u = e^{\tfrac{N-2}{2}\varphi}\tilde  u$ solves a Schr\"odinger equation of the form  \begin{eqnarray}\label{eqn: Weighted eigenfunction equation}
 -\Delta_g u + V u = \lambda(\Omega, \rho, V) \rho u \quad\textup{in }\Omega, \qquad u = 0 \quad\textup{on }\partial \Omega,
\end{eqnarray}
where $V$ is a potential and $\rho$ is a weighting function (see Section \ref{Background Section} for details). Therefore, we can establish estimates on the gap of domains in conformally flat geometry by studying them in terms of an underlying conformal connection (which will typically be chosen to have constant or vanishing curvature).

To obtain spectral gap estimates, we compare the gap of the problem of \eqref{eqn: Weighted eigenfunction equation} to the one-dimensional problem:  
\begin{align}\label{eqn: one-dimensional-model-weighted}
    -\varphi ''+ \overline V \varphi= \overline\lambda(\overline \rho, \overline V)\overline \rho \varphi\quad \textup{in }[-\tfrac D2, \tfrac D2],   \quad \varphi(-\tfrac D2) = \varphi( \tfrac D2) = 0
\end{align}
where $\overline{V}'$ is a modulus of convexity for $V$ and $\overline \rho'$ is a modulus of concavity of $\rho$. 
 More precisely, the main theorem which provides the gap estimates is the following.
\begin{thm}\label{main-thm}
    Let $\Omega \subset (M,g_{\mathbb M^N_K})$ be a convex domain, where $g_{\mathbb M^N_K}$ denotes the metric of the space of constant sectional curvature $K \geq 0$. Suppose that $\overline \rho' $ is a modulus of concavity for $\rho$ and that $\overline V$ is a modulus of convexity of $V.$ Moreover, assume the additional inequalities
    \begin{align}\label{condition-2}
        [\overline \rho(s) \cs_K^2(s)]' \left(\overline \lambda(\overline V, \overline \rho)-\lambda_1(\Omega, \rho, V)\right)\geq 0,  
    \end{align}
     where $\cs_K(s) = \cos(\sqrt K s)$ and (if $K>0$) that
    \begin{align}\label{Additional assumption for K>0}
        \overline \rho \leq \min \rho, \quad \overline V \geq \max V.
    \end{align}
    Then the spectral gap of the problem \eqref{eqn: Weighted eigenfunction equation} satisfies
    \begin{align}\label{ineq: main-inequality}
        \Gamma(\Omega, \rho, V) \geq \frac{\min \overline \rho}{\max_\Omega \rho}\overline \Gamma(\overline \rho, \overline V),
    \end{align}
    where $\overline \Gamma(\overline \rho, \overline V)$ is the spectral gap of \eqref{eqn: one-dimensional-model-weighted}.
\end{thm}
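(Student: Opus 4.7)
The strategy is to adapt the Andrews–Clutterbuck two-point maximum principle to the weighted Schrödinger problem on the constant-curvature model space $\mathbb M^N_K$. Let $u_1 > 0$ denote the first eigenfunction of \eqref{eqn: Weighted eigenfunction equation} and $\varphi_1 > 0$ the first eigenfunction of the one-dimensional model \eqref{eqn: one-dimensional-model-weighted}. The proof splits into two stages: first, establish a modulus-of-log-concavity estimate comparing $\log u_1$ to $\log \varphi_1$; then deduce the gap bound $\Gamma(\Omega,\rho,V) \geq \tfrac{\min\overline\rho}{\max_\Omega \rho}\overline\Gamma(\overline\rho,\overline V)$ from this estimate.

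For the first stage, I would introduce the two-point function
\[ Z(x,y) = \bigl\langle \nabla \log u_1(y),\, \gamma_{x,y}'(\tfrac{d}{2})\bigr\rangle - \bigl\langle \nabla \log u_1(x),\, \gamma_{x,y}'(-\tfrac{d}{2})\bigr\rangle - 2(\log \varphi_1)'(\tfrac{d}{2}), \]
where $d = d(x,y)$, and prove $Z \leq 0$ on $\Omega \times \Omega \setminus \{x=y\}$. The argument is by contradiction: at a positive interior maximum $(x_0,y_0)$, the first-order conditions pin down $\nabla \log u_1(x_0)$ and $\nabla \log u_1(y_0)$ in terms of $\varphi_1'/\varphi_1$, and the second-order condition, tested against variations generated by a parallel orthonormal frame along $\gamma_{x_0,y_0}$, produces a Jacobi-type inequality in which curvature enters through $\cs_K$. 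Substituting the PDE for $u_1$ and the ODE for $\varphi_1$ collapses this to an algebraic inequality whose decisive term is $[\overline\rho(s)\cs_K^2(s)]'\bigl(\overline\lambda - \lambda_1\bigr)$; hypothesis \eqref{condition-2} is exactly what makes this term have the correct sign, while \eqref{Additional assumption for K>0} absorbs the remaining lower-order errors in the positive-curvature case.

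For the second stage, factor the second eigenfunction as $u_2 = u_1 w$, where $w$ is sign-changing. A direct computation converts \eqref{eqn: Weighted eigenfunction equation} into the drift eigenvalue problem
\[ -\Delta_g w - 2\langle \nabla \log u_1,\, \nabla w\rangle = \Gamma(\Omega,\rho,V)\, \rho\, w \quad \text{in } \Omega, \]
whose first nontrivial eigenvalue is exactly $\Gamma(\Omega,\rho,V)$. The log-concavity from stage one shows that the drift $-2\nabla \log u_1$ dominates (in the geodesic direction) the 1D drift $-2(\log \varphi_1)'$, so a Rayleigh-quotient comparison in the direction of steepest descent of $w$ reduces the drift problem to the 1D gap $\overline\Gamma$. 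The weight ratio $\min \overline\rho/\max_\Omega \rho$ arises from replacing the ambient weighted measure $\rho\,dV_g$ by its 1D analogue $\overline\rho\, ds$ in this reduction.

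The hard part is stage one: isolating the precise combination of second derivatives so that hypothesis \eqref{condition-2} emerges as the critical sign condition. In the unweighted case $(\rho \equiv 1,\ V \equiv 0)$ this reduces to the classical Andrews–Clutterbuck argument on $\mathbb M^N_K$; the present novelty is tracking how $\rho$ and $V$ interact with the Jacobi-field geometry of the model space and assemble into the scalar $[\overline\rho\,\cs_K^2]'$. The curvature restriction $K \geq 0$ is used to sign the transverse Hessian contributions through the concavity of $\cs_K$, and \eqref{Additional assumption for K>0} is the natural comparison condition that makes the 1D model an honest barrier for the ambient problem.
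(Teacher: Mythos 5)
Your two-stage outline matches the architecture of the paper's proof (log-concavity modulus for $\log u_1$, then a gap comparison via $w=u_2/u_1$), and the paper itself notes that stage one can be done by a two-point maximum principle instead of the coupled-diffusion argument it actually uses. However, there are two genuine gaps in what you propose.

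First, your two-point function omits the curvature correction. The estimate that is actually available (and that the paper proves as Proposition \ref{thm: super-log-concavity-general}) is
\[
\mathcal F_{\nabla \log u_1}(x,y)\;\leq\; 2(\log\overline\varphi_1)'(\tfrac d2)\;+\;(N-1)\tn_K(\tfrac d2),
\]
whereas you assert $Z\le 0$ with $Z=\mathcal F_{\nabla\log u_1}-2(\log\overline\varphi_1)'(\tfrac d2)$, i.e.\ a strictly stronger statement when $K>0$. The term $(N-1)\tn_K$ is not lower order: it is exactly what absorbs the second variation of the distance function ($\sum_i\nabla^2_{E_i,E_i}\,d=-2(N-1)\tn_K(\tfrac d2)$) in the second-order test at the maximum, and it reappears in stage two, where it cancels against the drift of the radial process. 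Because the one-dimensional model \eqref{eqn: one-dimensional-model-weighted} here is a \emph{flat} model (no $(N-1)\tn_K\varphi'$ term), the sharper inequality you claim is not what the known arguments produce and would not close at the maximum point for $K>0$. You should carry the ansatz $2\psi(\tfrac d2)+(N-1)\tn_K(\tfrac d2)$ through the computation; with it, the combination $[\overline\rho\cs_K^2]'(\overline\lambda-\lambda_1)$ and the barriers \eqref{Additional assumption for K>0} do appear as you predict.

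Second, stage two as described is not yet an argument. Converting to the drift problem $\Delta w+2\langle\nabla\log u_1,\nabla w\rangle=-\Gamma\rho w$ with Neumann conditions is correct, and the factor $1/\max_\Omega\rho$ does come from a Rayleigh-quotient step removing the weight. But "a Rayleigh-quotient comparison in the direction of steepest descent of $w$" does not reduce the first nontrivial Neumann eigenvalue of a drift Laplacian on $\Omega$ to the one-dimensional gap: the min--max is over two-dimensional subspaces of functions on $\Omega$, and restricting test functions to geodesic directions gives no comparison. What is needed is either the Andrews--Clutterbuck parabolic argument (show that the oscillation of $w$ under the drift heat flow decays like $e^{-\min\overline\rho\,\overline\Gamma\,t}$ using the modulus-of-concavity of $\log u_1$ and the ratio $\Phi=\overline\varphi_2/\overline\varphi_1$ as a barrier), or the equivalent coupling/Feynman--Kac argument the paper uses. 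This is also where the remaining factor $\min\overline\rho$ in \eqref{ineq: main-inequality} comes from (the decay rate of $\Phi(\xi_t)$), not from a change of measure as you suggest. Finally, in the maximum-principle version of stage one you still need to rule out maxima on $\partial(\Omega\times\Omega)$ and on the diagonal, and to perturb by $\varepsilon e^{Ct}$ (or run a parabolic version) to make the second-order test legitimate; these are standard but must be addressed.
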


This result generalizes previous results from \cite{andrews2011proof,10.4310/jdg/1559786428}, which considered the case $\rho\equiv 1$. A major step of the proof is to establish a modulus of concavity for the log of the principle eigenfunction of \eqref{eqn: Weighted eigenfunction equation}. In essence, this estimate shows that the principle Dirichlet eigenfunction is at least as log-concave as the solution to the one-dimensional model.  
 \begin{prop}\label{thm: super-log-concavity-general}
    Under the assumptions of Theorem \ref{main-thm}, we have that for $ v = \log u_1$ that for  $x,y \in \Omega$
    \begin{align}\label{ineq: integral-over-hessian}
        \langle \nabla v(y), \gamma_{x,y}'(\tfrac{d(x,y)}{2})\rangle -\langle \nabla v(x), \gamma_{x,y}'(\tfrac{-d(x,y)}{2})\rangle\leq 2(\log \overline\varphi_1)'(\tfrac{ d(x,y)}{2})+(N-1)\textup{tn}_K(\tfrac{d(x,y)}{2})
    \end{align}
   where $\overline \varphi_1$ is the first eigenfunction of the problem \ref{eqn: one-dimensional-model-weighted} and $\textup{tn}_K$ is defined as $ \textup{tn}_K(s)=\sqrt{K} \tan (\sqrt{K}s)$.
\end{prop}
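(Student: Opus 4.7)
The plan is to adapt the two-point parabolic maximum principle of Andrews--Clutterbuck \cite{andrews2011proof}, together with the curvature corrections developed in the spherical setting \cite{10.4310/jdg/1559786428}, to the weighted setting of \eqref{eqn: Weighted eigenfunction equation}. I would first pass to the parabolic flow $\rho\,\partial_t u = \Delta_g u - V u$ on $\Omega$ with Dirichlet boundary conditions, whose positive solutions converge after normalization to a multiple of $u_1$. Writing $v = \log u$ and $\phi := (\log \overline \varphi_1)'$ for the drift of the one-dimensional model \eqref{eqn: one-dimensional-model-weighted}, the task becomes to show that the two-point function
\begin{align*}
    Z(x,y,t) := \langle \nabla v(y,t), \gamma_{x,y}'(\tfrac{d}{2})\rangle - \langle \nabla v(x,t), \gamma_{x,y}'(-\tfrac{d}{2})\rangle - 2\phi(\tfrac{d}{2}) - (N-1)\tn_K(\tfrac{d}{2})
\end{align*}
stays non-positive for all $t > 0$, where $d = d(x,y)$. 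Letting $t \to \infty$ then recovers the claimed estimate on the log of the principal eigenfunction.

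After a small perturbation to make $Z(\cdot,\cdot,0)<0$, I will argue by contradiction and suppose $Z$ first attains $0$ at some time $t_0>0$ and interior points $x_0\neq y_0$ in $\Omega$. Boundary contact is ruled out by the blowup of $|\nabla v|$ near $\partial\Omega$ together with the inward-pointing character of $-\nabla v$. At the interior touching point the first-order maximum conditions pin the tangential components of $\nabla v$ at $x_0$ and $y_0$ to $\pm\phi(d/2)$ along $\gamma_{x_0,y_0}$, while $\partial_t Z \geq 0$ and a spatial inequality $\mathcal L_{x,y} Z \leq 0$ hold for a suitable elliptic operator acting in both variables. The time derivative is evaluated by differentiating the log of the parabolic equation, which gives $\rho\,\partial_t v = \Delta v + |\nabla v|^2 - V$; combining this with the eigenvalue identity $\Delta v_\infty + |\nabla v_\infty|^2 - V = -\lambda_1(\Omega,\rho,V)\,\rho$ at the limit reduces the contradiction to a scalar inequality along $\gamma_{x_0,y_0}$.

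The main second-variation computation evaluates $\mathcal L_{x,y} Z$ using a parallel orthonormal frame along $\gamma_{x_0,y_0}$. In constant curvature $K$, each of the $N-1$ directions transverse to $\gamma_{x_0,y_0}$ admits an explicit Jacobi field whose second variation of arclength at the midpoint contributes exactly $\tn_K(d/2)$, producing the $(N-1)\tn_K(d/2)$ correction. Along the tangential direction, the ODE satisfied by $\overline\varphi_1$ yields the Riccati identity $\phi' = -\phi^2 + \overline V - \overline\lambda_1 \overline\rho$, which balances the Bochner-type terms coming from $\Delta v$ restricted to $\gamma_{x_0,y_0}$. The principal obstacle is coupling three independent sources of error in a single scalar inequality: the modulus of concavity of $\rho$, the modulus of convexity of $V$, and the curvature correction. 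Hypothesis \eqref{condition-2} is precisely what forces the correct sign for the cross-term arising from differentiating $\overline\rho\,\cs_K^2$ in the comparison between $\lambda_1(\Omega,\rho,V)\,\rho$ and $\overline\lambda_1 \overline\rho$, while hypothesis \eqref{Additional assumption for K>0} preserves this sign when $K>0$. Keeping the bookkeeping of these three contributions consistent so that the final inequality closes is the delicate part of the argument.
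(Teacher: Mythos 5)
Your strategy is the two-point parabolic maximum principle, whereas the paper proves this proposition by the coupled-diffusion method of \cite{Gong-Li-Luo2016,cho2023probabilistic}: it runs the mirror-coupled diffusion \eqref{SDE-3} with $\alpha=1$, derives the SDE for $F_t=Z(X_t,Y_t)$ from Proposition \ref{Second-order-derivative} via It\^o's formula, and closes the estimate with an integrating factor, stopping at the coupling time $\tau$, and Fatou's lemma, using $F_\tau=\Psi(\xi_\tau)=0$. The paper itself records in Subsubsection \ref{Sketch of Maximum Principle Proof} that your route works ``with a slight variation,'' so the plan is legitimate; the second-variation identities you would need at the touching point are exactly those of Proposition \ref{Second-order-derivative}, and your reading of where \eqref{condition-2} and \eqref{Additional assumption for K>0} enter matches their role in the stochastic proof. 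What the stochastic route buys is that no maximum point is ever selected (no $\varepsilon e^{Ct}$ perturbation, no attainment or regularity issues at the sup); what your route buys is avoiding the frame-bundle construction, Lemma \ref{lem:process}, and the a.s.\ finiteness of the coupling time.

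Two steps in your sketch need repair before it is a proof. First, the claim that the first-order conditions ``pin the tangential components of $\nabla v$ at $x_0$ and $y_0$ to $\pm\phi(d/2)$'' is false: the vanishing of $\nabla Z$ at a maximum constrains entries of the \emph{Hessian} of $v$ (for instance, $\nabla_{E_N}Z=0$ expresses $v_{NN}(x_0)+v_{NN}(y_0)$ in terms of $\phi'$ and $\tn_K'$), not $\nabla v$ itself. What is actually used is the zeroth-order touching condition $v_N(y_0)-v_N(x_0)=2\phi(\tfrac d2)+(N-1)\tn_K(\tfrac d2)$. Second, and relatedly, your sketch omits the step that makes the Riccati identity close: the Bochner computation produces the good-signed quadratic gradient terms
\begin{align*}
2\tn_K(\tfrac d2)\bigl(v_N^2(x_0)+v_N^2(y_0)\bigr)+\frac{2}{\sn_K(d)}\sum_{i=1}^{N-1}\bigl(v_i(y_0)-v_i(x_0)\bigr)^2,
\end{align*}
and one must discard the transverse sum and apply $v_N^2(x_0)+v_N^2(y_0)\ge\tfrac12\bigl(v_N(y_0)-v_N(x_0)\bigr)^2=\tfrac12 Z^2$ so that, after substituting the touching condition, the quadratic term $\phi^2$ appearing in the Riccati equation for $\phi$ is matched. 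Without this inequality the first-order terms in $\nabla v$ do not cancel and the scalar inequality does not close. With these corrections the argument goes through as in \cite{andrews2011proof,10.4310/jdg/1559786428}.
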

It is well-known that log-concavity estimates for the principle eigenfunction imply lower bounds on the fundamental gap (see, e.g., \cite{singer1985estimate,andrews2011proof}). 
Dividing \eqref{ineq: integral-over-hessian} by $d(x,y)$ and passing to the limit as $d(x,y) \rightarrow 0^+$, 
gives $\Hess \log u_1 \leq (\log \overline \varphi_1)''(0)+\tfrac{(N-1)K}{2}.$ Using this upper bound one can derive a lower bound of $\Gamma(\Omega)$ using known Neumann eigenvalue comparisons \cite{charalambous2015eigenvalue} (see Section \ref{section: rough-bounds-for-horoconvex}).
In the literature, there are currently two methods to prove \eqref{ineq: integral-over-hessian}. One can either use a maximum principle argument (see e.g. \cite{andrews2011proof,10.4310/jdg/1559786428,Ni2013,khan2023modulus}) or a stochastic analysis argument using coupled diffusions. In this paper, we use the stochastic method, but with a slight variation it is possible to obtain the same result using the maximum principle (see Subsubsection \ref{Sketch of Maximum Principle Proof} for a brief discussion of this). The diffusion approach was introduced in \cite{Gong-Li-Luo2016}, where they gave an alternative proof of the fundamental gap conjecture. Recently, Cho, Wei and Yang \cite{cho2023probabilistic} generalized this technique to spherical geometry, and our work follows their approach closely.

The assumptions \eqref{condition-2} and \eqref{Additional assumption for K>0} in Theorem \ref{main-thm} appear to be technical. However, Condition \eqref{condition-2} is necessary in the following sense: in hyperbolic space, there are domains which are convex with respect to the flat connection with very small inradius (i.e., large eigenvalue) but whose spectral gap is arbitrarily small (see Section \ref{But why is the gap small for small diameter?} and the Appendix \ref{We need the inradius} for details). Therefore, it is necessary for our estimates to decay as the eigenvalue becomes large. In fact, we show that if the inradius is bounded below, the spectral gap is bounded from below by a positive constant (see Corollary \ref{Estimate-for-inradius-bounded-below}).

In the case that $\rho$ is concave, 
we can take $\overline{\rho}$ to be a constant (or even decreasing), in which case \eqref{condition-2} is not a major obstacle and the functions $\overline \rho$ and $\overline V$ can be chosen to only depend on the diameter $D$ and the conformal factor $\exp(2\varphi)$ (see e.g. \eqref{Estimate-for-rho-concave}). When the underlying geometry is hyperbolic space (or $\mathbb{S}^1 \times \mathbb{S}^{N-1}$), the weighting function $\rho$ will not be concave, which will affect the estimates that can be established and force $\overline V$ (and thus the gap estimate) to depend on the first eigenvalue $\lambda_1(\Omega).$

\subsection{Overview of the Paper}

In Section \ref{Background Section}, we briefly review some background on hyperbolic and conformal geometry that will be used throughout the paper. In particular, we show that one can bound fundamental gaps a conformally flat metric in terms of the spectral gap of a related problem in Euclidean metric (see Proposition \ref{prop: conformally-flat-gap-scalarcurvature}). In Subsection \ref{subsection two-point-derivatives}, we consider the two-point function $Z(x,y)$ defined by (with respect to the Levi-Civita connection of $g_{\mathbb M^N_K}$)
\begin{align}\label{defn: of Z}
    Z(x,y) = \langle \nabla v(y) , \gamma_{x,y}'(\tfrac{d(x,y)}{2})\rangle - \langle \nabla v(x) , \gamma_{x,y}'(\tfrac{-d(x,y)}{2})\rangle,
\end{align}
where $\gamma_{x,y}$ denotes the geodesic such that $\gamma_{x,y}(\tfrac{-d(x,y)}{2}) = x$ and $\gamma_{x,y}(\tfrac{d(x,y)}{2}) = y$ and compute its derivatives for $ v = \log u_1,$ where $u_1$ denotes the first eigenfunction of the problem \eqref{eqn: Weighted eigenfunction equation}. The computation is similar to the one presented in \cite{10.4310/jdg/1559786428, khan2023modulus} but here we calculate this at any point $(x,y)$, not necessarily a maximal point (see also \cite{cho2023probabilistic}).

In Section \ref{Stochastic Analysis on Manifolds}, we first review some basic facts about stochastic analysis on manifolds. We then prove Theorem \ref{main-thm} via a coupled diffusion method, following \cite{cho2023probabilistic}. 
  In Section \ref{One-dimensional-comparison}, we study the eigenvalues of the model \eqref{eqn: one-dimensional-model-weighted} and prove a lower bound on the fundamental gap in dimension one, which we will then use for the comparison of the gap. In Section \ref{Section: horoconvex-estimates}, we turn our attention to horoconvex domains and prove Theorem \ref{The horoconvexity theorem}. We then derive a more explicit lower bound of the spectral gap for horoconvex domains using a comparison of Neumann eigenvalues. More precisely, we consider $w= \frac{u_2}{u_1}$, which satisfies
\begin{equation} \label{The gap is the eigenvalue}
       \Delta w +2\nabla \log u_1\cdot \nabla w=-\Gamma(\Omega, \rho)\rho w
    \end{equation}
    with Neumann boundary conditions \cite{singer1985estimate}. Then from Proposition  \ref{thm: super-log-concavity-general} one obtains an upper bound on the Hessian of $ v = \log u$ which then induces a lower bound on the Bakry-\'Emery-Ricci curvature \cite{charalambous2015eigenvalue}.
    
 In Section \ref{Section: general-conformally-flat}, we prove Theorem \ref{thm: Gap-Estimate-for-conformally-flat-manifolds}. We also investigate the relationship between the inner radius of a given domain $\Omega$ and its spectral gap. In particular, we show that if the conformal factor is concave with respect to the Euclidean metric $g_{\textup{E}}$ , one can prove lower bounds for $g_{\textup{E}}$-convex domains that are independent of the inner radius. However, in general it is only possible to prove lower bounds depending on the diameter if we restrict ourselves to domains which are $g_{\textup{E}}$ convex and whose inradius is controlled by the diameter (see Section \ref{Section: general-conformally-flat} for more details). We then prove a lower bound on the fundamental gap for small conformal deformation of the round sphere (see Theorem \ref{thm: small-deformation-of-sphere}).

\subsection*{Acknowledgements}

Malik T. wishes to thank Guofang Wei for her helpful comments and guidance.  G. Khan would like to thank David Herzog for some helpful conversations. He is supported in part by Simons Collaboration Grant 849022.

\section{Background and Preliminaries}\label{Background Section}

\subsection{Horospheres and horoconvexity}

Since Theorem \ref{The horoconvexity theorem} concerns horoconvex domains in hyperbolic geometry, we review some basic properties of such domains.

In hyperbolic space, a horosphere is a specific type of hypersurface which can be formed as the limit of expanding balls in hyperbolic space which share a tangent hyperplane and its point of tangency. Equivalently, one can consider them as spheres whose center lies on the ideal boundary of $\mathbb{H}^N$. In the Poincar\'e disk model of hyperbolic space, horospheres are Euclidean spheres inside the unit ball $B$ which are tangent to $\partial B.$

\begin{defn}
\label{Def: horoconvexity}
We say that a domain $\Omega \subset \mathbb H^N$ is horoconvex if at every point $p \in \partial \Omega$ there exists a horosphere passing through $p$ which contains $\Omega$ entirely.
\end{defn}

Equivalently, a domain $\Omega$ is horoconvex if and only if all the principal
curvatures of its boundary hypersurface are at least one. In this way, horoconvexity is a strengthening of convexity.

\subsection{Conformal deformations of the Laplace operator}

We now review some properties of conformal geometry. Given a function $\varphi: M\rightarrow \mathbb R,$ we consider the conformal metric  $\tilde g = e^{2\varphi}g.$ There are known formulas the Hessian and the Laplace operator of under conformal change, which states that for a smooth function $F:M \rightarrow \mathbb R$,
\begin{align}
\textup{Hess}_{\tilde g} F &= \textup{Hess}_g F - 2 d\varphi \otimes d F +(\nabla \varphi \cdot \nabla F) g \label{eqn: conformal change of Hessian} \\
\label{eqn: conformal change laplacian formula}
 \Delta _{\tilde g}F &= e^{-2\varphi}\Bigl(\Delta_g F +(N-2)\nabla \varphi\cdot \nabla F\Bigr).
\end{align}
 Moreover, the scalar curvature $R_g$ of the conformal metric satisfies: 
\begin{align}\label{eqn: Scalar-curvature-under-conformal-deformation}
   R_{e^{2\varphi}g}  = e^{-2\varphi}\left(R_g-2(N-1) \Delta_g \varphi-(N-1)(N-2)|\nabla \varphi|^2\right). 
\end{align}

Now, we consider a  $\tilde g$-eigenfunction $\psi$ satisfying
\begin{align}\label{conformal eigenfunction equation}
    \Delta_{\tilde g} \psi = -\lambda \psi  \quad \textup{in }\Omega \quad \textup{and }\quad  \psi = 0 \quad \textup{on }  \partial \Omega.
\end{align}
A straightforward argument shows the function $u = \psi e^{\tfrac{N-2}{2}\varphi}$ is a weighted eigenfunction of the following Schr\"odinger operator of $g$.
\begin{equation}\label{eqn: Conformal eigenfunction equation, Schrodinger form}
      \begin{cases}
       -\Delta_g u +\Bigl[ \tfrac{(N-2)^2}{4}  |\nabla \varphi |^2+\tfrac{N-2}{2}  \Delta_g \varphi \Bigr]u = \lambda   e^{2\varphi}  u\\
      u \vert_{\partial \Omega} = 0
   \end{cases},
\end{equation}
which is a weighted Schr\"odinger equation with weight $\displaystyle \rho =e^{2\varphi}$ and potential \begin{align}\label{eqn: V and scalar-curvature}
    V =  \tfrac{(N-2)^2}{4}  |\nabla \varphi |^2+\tfrac{N-2}{2}  \Delta_g \varphi= - \frac{N-2}{4(N-1)}e^{2\varphi}R_{\tilde g}+\frac{N-2}{4(N-1)}R_g.
\end{align}

In this paper, we study equations of the type \eqref{eqn: Weighted eigenfunction equation} and so we choose the conformal factor to facilitate this analysis. We denote the eigenvalues by $\lambda_i(\Omega,\Delta_g, \rho, V).$ In case $V \equiv 0,$ we just write $\lambda_ i(\Omega, \Delta_g, \rho).$ If clear from the context, we omit the dependence of $\Delta_g.$ Moreover, for any function $f :\Omega \rightarrow \mathbb R,$ we denote $\textup{osc}_\Omega (f) = \sup_\Omega f -\inf_\Omega f$ to be its oscillation.
\begin{prop}\label{prop: conformally-flat-gap-scalarcurvature}
Let $\Omega \subset(M,g= e^{2\varphi}g_{\textup{E}}).$ Then 
\begin{align*}
    \Gamma(\Omega, \Delta_{g})  \geq \Gamma(\Omega, \Delta_{g_{\textup{E}}}, e^{2\varphi})-\frac{N-2}{4(N-1)}\textup{osc}_\Omega (R_g),
\end{align*} 
where $R_g$ denotes the scalar curvature of $g.$
\end{prop}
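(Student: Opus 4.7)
The plan is to reduce the $g$-Dirichlet eigenvalue problem to a weighted Schr\"odinger problem in Euclidean background (with the conformal factor serving as weight), and then compare the eigenvalues via the Courant-Fischer min-max principle. The starting observation is that \eqref{eqn: Conformal eigenfunction equation, Schrodinger form}--\eqref{eqn: V and scalar-curvature}, read with $g_{\textup{E}}$ as the base metric and $g=e^{2\varphi}g_{\textup{E}}$ in the role of $\tilde g$, identify the Dirichlet spectrum of $\Delta_g$ with that of the weighted Schr\"odinger operator on $(\Omega, g_{\textup{E}})$,
\begin{equation*}
-\Delta_{g_{\textup{E}}} u + V u = \lambda\, e^{2\varphi}\, u, \qquad u|_{\partial\Omega}=0,
\end{equation*}
with weight $\rho=e^{2\varphi}$ and potential $V=-\tfrac{N-2}{4(N-1)}\,e^{2\varphi}R_g$ (the $R_{g_{\textup{E}}}$ term drops out since $g_{\textup{E}}$ is flat). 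The correspondence $\psi \mapsto u = \psi\, e^{(N-2)\varphi/2}$ is an $L^2$-isometry from $L^2(\Omega, dV_g)$ onto $L^2(\Omega, e^{2\varphi}dV_{g_{\textup{E}}})$ by a direct volume-element comparison, so $\lambda_i(\Omega, \Delta_g) = \lambda_i(\Omega, \Delta_{g_{\textup{E}}}, \rho, V)$ for every $i$.

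Next I would exploit the fact that the potential is proportional to the weight. Setting $W := V/\rho = -\tfrac{N-2}{4(N-1)} R_g$, the weighted Rayleigh quotient decomposes as
\begin{equation*}
R_V(u) \;:=\; \frac{\int_\Omega |\nabla u|^2 + V u^2}{\int_\Omega \rho\, u^2} \;=\; R_0(u) + \frac{\int_\Omega W\, \rho\, u^2}{\int_\Omega \rho\, u^2},
\end{equation*}
where $R_0(u)$ denotes the Rayleigh quotient for the $V=0$ problem. The second term is a $\rho u^2$-weighted mean of $W$ and hence lies in $[\inf_\Omega W, \sup_\Omega W]$ for every admissible $u$. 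Since the pointwise bound $R_0(u) + \inf_\Omega W \leq R_V(u) \leq R_0(u) + \sup_\Omega W$ is preserved under $\min\max$ over $k$-dimensional subspaces of $H^1_0(\Omega)$, I obtain for each $k$
\begin{equation*}
\mu_k + \inf_\Omega W \;\leq\; \lambda_k(\Omega, \Delta_g) \;\leq\; \mu_k + \sup_\Omega W, \qquad \mu_k := \lambda_k(\Omega, \Delta_{g_{\textup{E}}}, e^{2\varphi}).
\end{equation*}

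Subtracting the $k=1$ upper bound from the $k=2$ lower bound, and noting that $\textup{osc}_\Omega(W) = \tfrac{N-2}{4(N-1)}\,\textup{osc}_\Omega(R_g)$, yields
\begin{equation*}
\Gamma(\Omega, \Delta_g) \;=\; \lambda_2 - \lambda_1 \;\geq\; (\mu_2-\mu_1) - \textup{osc}_\Omega(W) \;=\; \Gamma(\Omega, \Delta_{g_{\textup{E}}}, e^{2\varphi}) - \tfrac{N-2}{4(N-1)}\,\textup{osc}_\Omega(R_g),
\end{equation*}
which is the claim. There is no real obstacle here: the substantive ingredient is the conformal-to-Schr\"odinger isometry already set up in \eqref{eqn: Conformal eigenfunction equation, Schrodinger form}--\eqref{eqn: V and scalar-curvature}, after which the result reduces to the min-max characterization applied to a potential that differs from zero by a bounded multiple of the weight.
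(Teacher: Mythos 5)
Your proposal is correct and follows essentially the same route as the paper: reduce to the weighted Schr\"odinger problem with $\rho=e^{2\varphi}$ and $V=-\tfrac{N-2}{4(N-1)}\rho R_g$ via the conformal substitution $u=\psi e^{(N-2)\varphi/2}$, then observe that $V/\rho$ is bounded so the min-max characterization sandwiches each $\lambda_k(\Omega,\Delta_g)$ between $\mu_k+\inf_\Omega W$ and $\mu_k+\sup_\Omega W$. Your write-up of the two-sided eigenvalue bound is exactly the paper's inequality \eqref{Boundedness-of-evalues}, stated with the substitution $W=V/\rho$ made explicit.
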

\begin{proof}
    In view of \eqref{eqn: Conformal eigenfunction equation, Schrodinger form}, we only need to study the eigenvalues of the problem  \begin{align*}
    -\Delta \tilde u +V\tilde u = \lambda e^{2\varphi}\tilde u \quad \textup{in } \Omega,\quad \tilde u = 0 \quad \textup{on }\partial \Omega.
\end{align*}
Note that 
\begin{align*}
    V = -\frac{N-2}{4(N-1)}e^{2\varphi}R_g,
\end{align*}
where $R_g$ is the scalar curvature of the metric $g.$ 
Note that for $\rho = \exp (2\varphi)$, the eigenvalues satisfy 
\begin{align*}
    \lambda_1(\Omega, g) &= \inf \frac{\int_\Omega |\nabla f|^2+Vf^2\, dx}{\int _\Omega \rho f^2\, dx}\\
    &= \inf \frac{\int_\Omega |\nabla f|^2-\tfrac{N-2}{4(N-1)}\rho R_g f^2\, dx}{\int _\Omega \rho f^2\, dx}.
\end{align*}
Thus, 
\begin{align}\label{Boundedness-of-evalues}
    \lambda_i(\Omega, \rho)-\frac{N-2}{4(N-1)}\max_\Omega R_g\leq\lambda_i(\Omega, g)\leq \lambda_i(\Omega, \rho) -\frac{N-2}{4(N-1)}\inf_\Omega R_g.
\end{align}
In particular, we have that for the fundamental gap, 
\begin{align*}
    \Gamma(\Omega, g) \geq \Gamma(\Omega,\Delta_{g_{\textup{E}}}, \rho)-\frac{N-2}{4(N-1)}\textup{osc}_\Omega (R_g).
\end{align*}
\end{proof}
\begin{rem}
    The proof above shows that if $(M,g)$ has constant scalar curvature, then we have that 
    \begin{align}\label{Eigenvalue-under-conformal-change-explicit-scalarcurvature}
        \lambda_i(\Omega, \Delta_g) = \lambda_i(\Omega, \Delta_{
        g_{\textup{E}}}, e^{2\varphi})-\frac{(n-2)R_g}{4(n-1)}
    \end{align}
\end{rem}

From Proposition \ref{prop: conformally-flat-gap-scalarcurvature}, in the case of constant scalar curvature, it is possible to bound the fundamental gap if we can obtain a bound for the problem
\begin{align}\label{eqn: only-weighted-no-potential}
    -\Delta_{g_{\textup{E}}} u = e^{2\varphi} \lambda u \quad \textup{in }\Omega \quad u = 0 \quad \textup{on }\partial \Omega.
\end{align}

\subsection{Derivatives of Two-Point Functions in Curved Geometries}\label{subsection two-point-derivatives}
In this section, we compute the second-derivatives of a two-point function $Z$ in a Riemannian manifold. In previous works \cite{10.4310/jdg/1559786428, khan2023modulus}, these derivatives were computed explicitly at the points which maximize the function. In this paper, we calculate the derivative without assuming that we are at a maximum, since this will be needed for the probabilistic argument to obtain a modulus of concavity.

Let $\Omega \subset (M, g_{\mathbb M^N_K})$ be uniformly convex. In other words, for any two different points $x,y\in \Omega$, there is a unique minimizing geodesic which is contained entirely in $\Omega$. We denote this geodesic by $\gamma_{x,y}$ and use the convention that $\gamma_{x,y}(\tfrac{-d(x,y)}{2}) = x$ and $\gamma_{x,y}(\tfrac{d(x,y)}{2}) = y.$ We then let $v = \log u_1,$ where $u_1$ is the first Dirichlet eigenfuntion of the problem \eqref{eqn: Weighted eigenfunction equation} and consider the function $Z:\Omega\times \Omega \rightarrow \mathbb R$ defined by \eqref{defn: of Z}

 Now, we let $x,y$ be a distinct pair of points in $\Omega$ and denote the set of such points as $\textrm{Conf}_2(\Omega)$ (short for configuration space).
  In order to compute derivatives of $Z$ at $(x,y)$, we first define a frame along $\gamma_{x,y}$, which will be helpful in the computations. More precisely, we let $\{e_i\}_{1\leq i\leq N}$
 be an orthonormal basis at $x$ with $e_n=\gamma'_{x,y}(\tfrac{-d}{2})$ and parallel transport $\{e_i\}_{1\leq i\leq N}$ along $\gamma_{x,y}$. We then define the vectors
\[ E_i= e_i\oplus e_i,\ 1\leq i\leq N-1,\quad \textup{and } \quad E_N=e_N\oplus( -e_N).  \] 
For notational purposes, we introduce the following notation. For a vector field $X$ on $\Omega,$ we define the two-point function $\mathcal F_X(x,y)$ as follows 
\begin{align}\label{defn: notation for mathcal D}
    \mathcal F_X(x,y) = \langle X(y), \gamma_{x,y}'(\tfrac d2)\rangle - \langle X(x), \gamma_{x,y}'(\tfrac {-d}{2})\rangle .
\end{align}
Recall that $\overline \rho$ is said to be a \textit{modulus of concavity of $\rho$} if 
\begin{align*}
    \mathcal F_{\nabla \rho}(x,y) \leq 2 \overline \rho'(\tfrac d2)
\end{align*}
and we call $\overline V$ a \textit{modulus of convexity} of $V$ if $-\overline V$ is a modulus of concavity of $-V,$ i.e. if 
\begin{align*}
    \mathcal F_{\nabla V}(x,y) \geq 2 \overline V'(\tfrac d2).
\end{align*}
Finally, given a function $f$ on $\Omega,$ we define the two point function $\mathcal C_f(x,y)$ as follows \begin{align}\label{defn of mathcal C}
    \mathcal C_f(x,y) = f(x) + f(y).
\end{align}

\begin{prop}\label{Second-order-derivative}
 Using the above notation, one has that 
 \begin{align*}
     \sum_{i =1 }^N \nabla ^2_{E_i,E_i}Z(x,y) &= -2 \tn_K(\tfrac d2)\nabla _{E_N}Z(x,y) -2\nabla_{\nabla v(x) \oplus \nabla v(y)}Z(x,y)+(N-1)[K-\tn_K^2(\tfrac d2)] Z(x,y) \\
     &\quad-2\tn_K(\tfrac d2)\mathcal C_{[V-\lambda_1\rho]}(x,y)+\mathcal F_{\nabla [V-\lambda_1 \rho]}(x,y)    \\
     &\quad +2\tn_K(\tfrac d2)\left[v_N^2(x)+v_N^2(y)\right]+\frac{2}{\sn_K(d)}\sum_{i =1 }^{N-1}\left(v_i(y)-v_i(x)\right)^2.
 \end{align*}
\end{prop}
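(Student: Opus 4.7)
The plan is to realize each $\nabla^2_{E_i,E_i} Z(x,y)$ through an explicit geodesic variation of $\gamma_{x,y}$, and to assemble the resulting expression by combining the Jacobi equation in constant curvature with the eigenfunction identity $\Delta v + |\nabla v|^2 = V - \lambda_1 \rho$ (obtained by substituting $u_1 = e^v$ into \eqref{eqn: Weighted eigenfunction equation}). A convenient starting point is the integral representation
$$Z(x,y) \;=\; \int_{-d/2}^{d/2} \nabla^2 v\bigl(\gamma'_{x,y}(t),\gamma'_{x,y}(t)\bigr)\,dt,$$
which is valid since $\gamma_{x,y}$ is a geodesic.

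For each $i$, I would construct a geodesic variation $\sigma_i(s,t)$ with $\sigma_i(0,t) = \gamma_{x,y}(t)$ whose variation field $J_i(t) = \partial_s \sigma_i(s,t)|_{s=0}$ is the unique Jacobi field along $\gamma_{x,y}$ matching the boundary data prescribed by the components of $E_i$. In constant sectional curvature $K$ these Jacobi fields are explicit:
\begin{align*}
  J_i(t) &= \frac{\cs_K(t)}{\cs_K(d/2)}\,e_i(t), \qquad 1 \leq i \leq N-1, \\
  J_N(t) &= \frac{2t}{d}\,e_N(t),
\end{align*}
where $e_i(t)$ denotes the parallel transport of $e_i$ along $\gamma_{x,y}$. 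Differentiating the integral representation of $Z$ twice in $s$ is a standard index-type computation: the second derivative produces third covariant derivatives of $v$ contracted against $(J_i,\gamma',\gamma')$ together with the curvature term $R(J_i,\gamma')\gamma' = K\bigl(J_i - \langle J_i,\gamma'\rangle \gamma'\bigr)$ from the Jacobi equation, as well as contributions from the motion of the integration bounds in the case $i = N$.

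Summing over $i$, I would apply the Ricci identity to commute mixed covariant derivatives and then substitute the eigenfunction identity to eliminate $\Delta v$ in favor of $V - \lambda_1\rho - |\nabla v|^2$. Standard trigonometric identities relating $\cs_K$, $\sn_K$, and $\tn_K$ (together with $\cs_K' = -K\sn_K$ and the double-angle formula for $\sn_K$) convert the integrated Jacobi coefficients into the factors $\tn_K(d/2)$, $\sn_K(d)^{-1}$, and $K - \tn_K^2(d/2)$ appearing in the claim. The tangential field $J_N$ is responsible for the $-2\tn_K(d/2)\nabla_{E_N} Z$ term (from the motion of the integration bounds combined with the rescaling of $\gamma'$) and for $2\tn_K(d/2)\bigl[v_N^2(x)+v_N^2(y)\bigr]$ (from the $|\nabla v|^2$ substitution at the endpoints). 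The perpendicular fields yield $(N-1)[K - \tn_K^2(d/2)]Z$ from the curvature, and upon completing squares in the cross terms between $\nabla v$ and $J_i(\pm d/2)$ one obtains $\tfrac{2}{\sn_K(d)}\sum_{i<N}(v_i(y)-v_i(x))^2$ together with $-2\nabla_{\nabla v(x) \oplus \nabla v(y)} Z$. The remaining terms $\mathcal{F}_{\nabla[V-\lambda_1\rho]}(x,y)$ and $-2\tn_K(d/2)\mathcal{C}_{[V-\lambda_1\rho]}(x,y)$ arise by differentiating the eigenfunction equation at each endpoint.

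The main obstacle is bookkeeping. Unlike in \cite{10.4310/jdg/1559786428,khan2023modulus}, where the computation is carried out at a critical point of $Z$ and many first-order contributions vanish, here every variation term must be tracked, and the perpendicular cross contributions must be reorganized as a single perfect square $(v_i(y)-v_i(x))^2$ rather than left as an asymmetric quadratic form. Once this organization is in place, the rest is routine algebra, and the overall structure parallels the computation of Cho--Wei--Yang \cite{cho2023probabilistic} in the spherical setting.
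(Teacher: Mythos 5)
Your proposal follows essentially the same route as the paper: second variation of $Z$ along explicit geodesic variations whose variation fields are the constant-curvature Jacobi fields $\frac{\cs_K(t)}{\cs_K(d/2)}e_i$, combined with the identity $\Delta v + \|\nabla v\|^2 = V - \lambda_1\rho$, the Bochner/Ricci commutation, the auxiliary first-order variation in the direction $\nabla v(x)\oplus\nabla v(y)$ to absorb $\mathcal F_{\nabla\|\nabla v\|^2}$, and the identity $\tn_K(\tfrac d2) = \tfrac{1-\cs_K(d)}{\sn_K(d)}$. One bookkeeping attribution is off — the term $-2\tn_K(\tfrac d2)\nabla_{E_N}Z$ does not come from the tangential variation $J_N$ (which only produces the pure third-derivative term $v_{NNN}$ since the geodesic is unchanged), but from rewriting $\sum_{i<N}v_{ii}$ as $\Delta v - v_{NN}$ and using $v_{NN}(x)+v_{NN}(y) = -\nabla_{E_N}Z$ — but this would be corrected in carrying out the computation and does not affect the validity of the method.
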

\begin{proof} To compute these derivatives, we first compute $\nabla^2_{E_N,E_N}Z(x,y)$. Since $\nabla_{e_N} \gamma'_{x,y}(s) =0$ we immediately have
\begin{align*}
    \nabla^2 _{E_N,E_N}Z(x,y)=\langle \nabla _{e_N}\nabla _{e_N}\nabla v(y),e_N\rangle -\langle \nabla _{e_N}\nabla _{e_N}\nabla v(x),e_N\rangle.
\end{align*}
 To compute the derivatives $\nabla ^2_{E_i,E_i}Z(x,y)$
 with $i=1,\dots, N-1$, we introduce variations of $\gamma_{x,y}(s)$, which are denoted
 \begin{equation} \label{Definition_of_eta}
     \eta_i(r,s): (-\delta, \delta) \times [-\tfrac{d}{2}, \tfrac{d}{2}] \rightarrow \Omega.
 \end{equation} 
To define this variation, we let $\sigma_1(r)$ be the geodesic with $\sigma_1(0) =x, \tfrac{\partial}{\partial r} \sigma_1(0)=e_i(\tfrac{-d}{2})$ and  $\sigma_2(r)$ be the geodesic with $\sigma_2(0) =y, \tfrac{\partial}{\partial r} \sigma_2(0)=e_i(\tfrac{d}{2})$.
We define $\eta_i(r, s)$, $s \in [-\tfrac{d}{2},\tfrac{d}{2}]$ to be the minimal geodesic connecting $\sigma_1(r)$ and $\sigma_2(r)$.  Since $\Omega$ is strongly convex, the variation $\eta(r,s)$ is smooth.

For fixed $r \neq 0$, the curves $\eta_i(r,\cdot)$ will not be unit speed geodesics in general. Hence we define
   \begin{align*}
    T_i(r,s):=\frac{\eta_i'}{\|\eta_i'\|},
\end{align*}
where we denoted $\partial/\partial s$ by $'$, which is a convention we will use throughout the rest of the paper.

Doing so, we have the following identity:  
\begin{align}  \label{Z_ii1}
   & \nabla^2_{E_i,E_i}Z(x,y)  =\frac{d^2}{dr^2}Z(\eta_i(r,\tfrac{-d}{2}),\eta_i(r,\tfrac{d}{2}))|_{r=0} \\
    &=\langle \nabla_{e_i}\nabla _{e_i}\nabla v(y),e_N\rangle -\langle \nabla_{e_i}\nabla _{e_i}\nabla v(x),e_N\rangle \nonumber\\
   & \quad +2\langle \nabla_{e_i}\nabla v (y),\nabla _r T_i(r,\tfrac{d}{2})\rangle -2\langle \nabla_{e_i}\nabla v(x_0),\nabla _r T_i(r,\tfrac{-d}{2})\rangle  \nonumber  \\
    & \quad+\langle \nabla v(y),\nabla _r\nabla _rT_i(0,\tfrac{d}{2})\rangle - \langle \nabla v(x),\nabla _r\nabla _r T_i(0,\tfrac{-d}{2})\rangle. \nonumber
    \end{align}
We then denote the variation field 
\begin{equation} \label{Definition of J_1}
  J_i(r,s) =  \tfrac{\partial}{\partial r} \eta_i(r,s),
  \end{equation} which is the Jacobi field along $\eta(s)$ satisfying $J_i(r, -\tfrac{d}{2})  =\sigma_1'(r),  \ J_i(r,\tfrac{d}{2})  = \sigma_2'(r)$. For simplicity, we will often drop the initial $0$ and denote $J_i(0,s)= J_i(s)$. 

We can simplify this expression using several formulas derived on \cite[Page 363]{10.4310/jdg/1559786428}. In particular, they showed that for $\gamma = \gamma_{x,y}$
\begin{eqnarray} 
    \nabla _r T_i(0,s)&=&-\langle \gamma',J_i'\rangle e_N+J_i', \label{derivative of V} \\
    \nabla _r\nabla _r T_i(0,s)&=&\Bigl(3\langle \gamma',J_i'\rangle^2 -\|J_i'\|^2-\langle \nabla _r\nabla _r \tfrac{\partial \eta_i}{\partial s},e_N\rangle \Bigr)e_N -2\langle \gamma',J_i'\rangle J_i'+\nabla _r\nabla _r \tfrac{\partial \eta_i}{\partial s}.  \nonumber
\end{eqnarray}
As the tangential component of a Jacobi field is linear and $\langle \gamma',J_i\rangle=0,$ at the end points, we have $\langle \gamma',J_i\rangle=0.$ Therefore $\langle \gamma',J_i'\rangle=0.$
Moreover, since the metric has constant sectional curvature $K,$ we have that the Jacobi fields are given explicitly by 
\begin{align*}
    J_i(s)  = \frac{\cs_K(s)}{\cs_K(\tfrac d2)}e_i(s), 
\end{align*}
so that we find 
\begin{equation} \label{T-derivative}
   \nabla _r T_i(0,s)= -K\frac{\sn_K(s)}{\cs_K(\tfrac d2)}, \ \  \  \nabla _r\nabla _r T_i(0,s)=-K^2\frac{\sn_K^2(s)}{\cs_K^2(\tfrac d2)}e_N+\sum_{j = 1}^{N-1}\langle \nabla _r\nabla _r \tfrac{\partial \eta_i}{\partial s}, e_j\rangle .
\end{equation}
In \cite{10.4310/jdg/1559786428}, it was shown that $\langle \nabla _r\nabla _r \tfrac{\partial \eta_i}{\partial s}, e_j\rangle = 0$ for all $j = 1, \dots, N-1,$ 
\eqref{T-derivative} and \eqref{Z_ii1}, we have 
\begin{align*}
    & \nabla^2_{E_i,E_i}Z(x,y) =\\
    &\langle \nabla_{e_i}\nabla _{e_i}\nabla v(y),e_N\rangle -\langle \nabla_{e_i}\nabla _{e_i}\nabla v(x),e_N\rangle -2\tn_K(\tfrac d2) \left[v_{ii}(x)+v_{ii}(y)\right]    
   -\tn_K^2(\tfrac d2) Z(x,y).
\end{align*}
We now sum these up for all $ i = 1, \dots, N$ and obtain that 
\begin{align*}
 \sum_{i=1}^N \nabla^2_{E_i,E_i}Z(x,y) 
    &=\langle \Delta \nabla v(y),e_N\rangle -\langle \Delta \nabla v(x),e_N\rangle -2\tn_K(\tfrac d2) \left[\Delta v(y) +\Delta v(x)\right]\\
    &\quad -2\tn_K(\tfrac d2)\nabla_{E_N}Z(x,y)-(N-1)\tn_K^2(\tfrac d2) Z(x,y).
\end{align*}
Then using the identities \begin{eqnarray} 
\nonumber -\lambda \rho +V-\Delta v &= & \|\nabla v\|^2 \label{lap-log} \\
 \Delta\nabla v -\Ric(\nabla v, \cdot) &= & \nabla\Delta v, \label{Bochner}\nonumber
 \end{eqnarray}
 we find that 
 \begin{align*}
\sum_{i=1}^N \nabla^2_{E_i,E_i}Z(x,y) 
    &= -2\tn_K(\tfrac d2)\mathcal C_{[V-\lambda_1\rho]}(x,y)+\mathcal F_{\nabla[ V-\lambda_1\rho]}(x,y)    \\
&  \quad  +2\tn_K(\tfrac d2) \mathcal C_{\|\nabla v\|^2}(x,y)-\mathcal F_{\nabla \|\nabla v\|^2}(x,y)\\
    &\quad -2\tn_K(\tfrac d2)\nabla_{E_N}Z(x,y)-(N-1)\tn_K^2(\tfrac d2) Z(x,y).
 \end{align*}
We now compute $-\mathcal F_{\nabla \|\nabla v\|^2}(x,y)$ to show that   \begin{align*}
    -\mathcal F_{\nabla \|\nabla v\|^2}(x,y) = -2\nabla_{\nabla v(x) \oplus \nabla v(y)}Z(x,y) -2\tn_K(\tfrac d2)\sum_{i = 1}^{N-1}\left[v_i^2(x)+v_i^2(y)\right]+\frac{2}{\sn_K(d)}\sum_{i =1 }^{N-1}\left(v_i(y)-v_i(x)\right)^2.
\end{align*}
We choose the variation $\eta(r,s)$ for the vector $\nabla v(x) \oplus \nabla v(y)$, and the unit variation field  $T$ as before. 
We then have that 
\begin{align*}
    \nabla_{\nabla v(x) \oplus \nabla v(y)}Z(x,y) &= \frac{d}{dr}Z(\eta_i(r,\tfrac{-d}{2}),\eta_i(r,\tfrac{d}{2}))|_{r=0} \\
    &=\langle \nabla _{\nabla v(y)}\nabla v(y),e_N\rangle -\langle \nabla _{\nabla v(x)}\nabla v(x),e_N\rangle \nonumber\\
   & \quad +2\langle \nabla v (y),\nabla _r T(r,\tfrac{d}{2})\rangle -2\langle \nabla v(x_0),\nabla _r T(r,\tfrac{-d}{2})\rangle|_{r=0}.  
\end{align*}
We then let $J(s)$ be the Jacobi field along $\gamma$ with $J(-\tfrac d2) = \nabla v (x), \ J(\tfrac d2) =\nabla v(y)$, that is, 
  $$J(s) = \left(  [\tfrac12-\tfrac{s}{d}]v_N(x)+[\tfrac12+\tfrac{s}{d}]v_N(y)\right) e_N   + \sum_{i=1}^{N-1} \left[v_i(x) \frac{\sn_K(\tfrac d2 -s)}{\sn_K(d)}+v_i(y) \frac{\sn_K(\tfrac d2 +s)}{\sn_K(d)} \right]e_i. $$
  Using \eqref{derivative of V}, we find that
  $$\nabla_{\tfrac{\partial}{\partial r}} T(r, s)|_{r=0} = \sum_{i=1}^{N-1} \left[-v_i(x) \frac{\cs_K(\tfrac d2 -s)}{\sn_K(d)}+v_i(y)\frac{\cs_K(\tfrac d2 +s)}{\sn_K(d)} \right]e_i.$$
Plugging these in, together with the fact that $\nabla_{\nabla v} \nabla v=\frac{1}{2}\nabla\Vert \nabla v \Vert^2$, we get
\begin{align}
2\nabla _{\nabla v(x) \oplus \nabla v(y) }Z(x,y) 
&=\langle \nabla\| \nabla v(y)\|^2 , \gamma'(\tfrac{d}{2}) \rangle-\langle \nabla \|\nabla v(x)\|^2 , \gamma'(\tfrac{-d}{2}) \rangle\nonumber\\
&\quad +2\sum_{i=1}^{N-1}v_i(y)\left(\frac{\cs_K(d)}{\sn_K(d)} v_i(y) -\frac{1}{\sn_K(d)}v_i(x)  \right)\nonumber\\
&\quad -2\sum_{i=1}^{N-1}v_i(x)\left(\frac{1}{\sn_K(d)}v_i(y) -\frac{\cs_K(d)}{\sn_K(d)}v_i(x)  \right).\nonumber
\end{align}
The claim then follows since by straight-forward computation, using $\tn_K(\tfrac d2) = \tfrac{1-\cs_K(d)}{\sn_K(d)}.$
\end{proof}

\section{Proof of Theorem \ref{main-thm} via Stochastic Analysis}
\label{Stochastic Analysis on Manifolds}

In order to prove Theorem \ref{main-thm}, we use the technique of coupled diffusions. This method was previously used in \cite{Gong-Li-Luo2016,cho2023probabilistic} to prove the fundamental gap theorem in Euclidean and spherical geometry. The proof is divided into two steps. We first prove Proposition \ref{thm: super-log-concavity-general} and then Proposition \ref{Gap-Comparison-Improved}, a spectral gap comparison (see also \cite{daifundamental}). The proof of Theorem \ref{main-thm} then follows from combining these two propositions. 
\subsection{Stochastic Analysis}
In order to apply coupled diffusions in this setting, we set some notation. We let $O(M)$ denote the orthonormal frame bundle over $M$ and $\pi : O(M) \rightarrow M$ be the canonical projection. Similar to \cite{hsu2002stochastic}, we define $H_i(U)$ to be the horizontal vector field on $O(M)$, which satisfies  $\pi_*(H_i) = U e_i,$ where $\{e_i\}_{i=1}^N$ is the standard basis on $\mathbb R^N$ and $U \in O(M).$ 

  Following the strategy used in earlier work, we now construct a diffusion $(X_t, Y_t)$ on the product manifold $M\times M$ using the Eells-Elworthy-Malliavin approach. 
  We first construct a diffusion $(U_t,V_t)$ on $O(M)\times O(M)$ and then project it to $M\times M$ via $\pi.$ To introduce the coupling, let $m_{x,y}:T_xM \rightarrow T_yM$ be the \emph{mirror map}, defined as follows: for each $ w \in T_xM,$ $m_{x,y}(w)$ is obtained by first parallel transporting $w$ among $\gamma_{x,y}$ onto $T_yM$ and then reflecting the resulting vector with respect to the hyperplane perpendicular to the geodesic at $y.$ Recall that for $x,y\in \Omega$ we let $\{e_i\}_{1\leq i\leq N}$ be an orthonormal basis at $x$ with $e_N=\gamma'_{x,y}(\tfrac{-d}{2})$ and parallel transport $\{e_i\}_{1\leq i\leq N}$ along $\gamma_{x,y}$. We then have that $m_{x,y}(e_i) = e_i$ for $ i = 1, \dots , N-1$ and $m_{x,y}(e_N) = -e_N.$
  For any function $G: M\times M \rightarrow \mathbb R,$ we denote $\tilde G :O(M)\times O(M)\rightarrow \mathbb R$ to be its lift onto $O(M)$ defined by $\tilde G = G \circ (\pi, \pi).$
  
  In particular, we take $(B_t)_{t\geq 0}$ to be a standard $\mathbb R^N$-valued Brownian motion and for $\alpha \in [0,1]$, we construct the $\alpha$-parametrized family of diffusions on $O(M)\times O(M)$ given by
 \begin{align}\label{SDE-3}
 \begin{cases}
&dU_t=\sum_{i=1}^{N}\sqrt{2}H_{e_i}(U_t)\circ dB^i_t+2H_{\nabla v}(U_t)dt+2\alpha\tn_K\left(\tilde \xi(U_t,V_t)\right)\tilde{\gamma}'(U_t)dt, \quad U_0 = u_x
   \\
&dV_t=\sum_{i=1}^{N}\sqrt{2}H_{e_i}(V_t)\circ dW^i_t+2H_{\nabla v}(V_t)dt-2\alpha\tn_K\left(\tilde \xi(U_t,V_t)\right)\tilde{\gamma}'(V_t)dt,
\quad  V_0 = m_{x,y}u_x  \\
&dW_t=V_tm_{X_tY_t}(U_t)^{-1} dB_t,\\
 &X_t=\pi(U_t),\ \ \ \ \ \ Y_t=\pi(V_t).
 \end{cases}
\end{align} 
Here, we denote $\xi_t=d(X_t,Y_t)/2$ and $\tilde \gamma$ to be the horizontal lift of $\gamma $ onto $O(M)$. Furthermore, we let $u_x$ be the frame $(e_i)_i$ at $x.$

The key ingredient in the stochastic calculus is It\^o's formula  (see, e.g., Chapter 4 in \cite{oksendal2013stochastic}). However, before doing the main computation, we first recall several lemmas from \cite{cho2023probabilistic}. The first shows that if $X_0,Y_0\in \Omega$, the processes $X_t,Y_t$ will stay in $\Omega$ for all $t\geq 0$.
		\begin{lem}\label{lem:process}
		Let $\Omega$ be a bounded strictly convex domain in a Riemannian manifold with smooth boundary $\partial \Omega$.	If $X_0, Y_0\in\Omega$, then $X_t,Y_t\in\Omega$ for all $t\geq 0$.
		\end{lem}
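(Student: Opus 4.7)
The key observation is that the singular drift $2\nabla v = 2\nabla \log u_1$ appearing in both SDEs of \eqref{SDE-3} blows up at $\partial\Omega$ and points strongly inward there (by Hopf's lemma, $u_1 > 0$ in $\Omega$ and $\partial_\nu u_1 < 0$ on $\partial\Omega$). Therefore the function $w := -\log u_1$ is smooth on $\Omega$ with $w(z) \to +\infty$ as $z \to \partial\Omega$, and can serve as a Lyapunov barrier showing that neither $X_t$ nor $Y_t$ can hit $\partial\Omega$ in finite time. The plan is to apply It\^o's formula to $w$ along each process, bound the resulting generator from above, and conclude via a standard localization argument.

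For $X_t$, whose generator is $L_X = \Delta_g + 2\nabla v \cdot \nabla$, a direct computation using the eigenvalue equation $-\Delta_g u_1 + V u_1 = \lambda_1 \rho u_1$ yields
\[
L_X w \;=\; \Delta_g w - 2|\nabla \log u_1|^2 \;=\; (V - \lambda_1 \rho) + |\nabla\log u_1|^2 - 2|\nabla\log u_1|^2 \;\leq\; \max_\Omega V + \lambda_1 \max_\Omega \rho \;=: C.
\]
Define the stopping times $\tau_n := \inf\{t\geq 0 : w(X_t) \geq n\}$ and $\tau := \inf\{t\geq 0 : X_t \in \partial\Omega\} = \lim_n \tau_n$. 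On $[0,\tau_n)$ all coefficients are smooth, so It\^o's formula gives a local martingale $M_t$ such that $w(X_{t\wedge\tau_n}) = w(X_0) + \int_0^{t\wedge\tau_n} L_X w(X_s)\,ds + M_t$. Taking expectations yields $\mathbb{E}[w(X_{t\wedge\tau_n})] \leq w(X_0) + Ct$. If $\mathbb{P}(\tau \leq t) > 0$, then on that event $w(X_{t\wedge\tau_n}) \geq n$ for all $n$, forcing $\mathbb{E}[w(X_{t\wedge\tau_n})] \to \infty$ as $n \to \infty$, a contradiction. Hence $\tau = \infty$ almost surely.

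The argument for $Y_t$ is analogous, but the generator picks up an additional term from the coupling drift $-2\alpha \tn_K(\xi_t)\tilde\gamma'(V_t)$, giving the extra contribution $2\alpha\, \tn_K(\xi_t) \langle \nabla \log u_1(Y_t), \gamma'_{X_t,Y_t}(\tfrac{d}{2})\rangle$ to $L_Y w(Y_t)$. Since $\xi_t \leq D/2$ where $D$ is the diameter of $\Omega$ (a finite quantity bounded away from $\pi/(2\sqrt{K})$ when $K>0$ by the assumptions of Theorem \ref{main-thm}), $\tn_K(\xi_t)$ remains uniformly bounded. Applying Young's inequality $2\alpha|\tn_K|\cdot |\nabla\log u_1| \leq \alpha^2 \tn_K^2 + |\nabla\log u_1|^2$ absorbs the gradient term into the $-|\nabla\log u_1|^2$ already present and yields $L_Y w \leq C'$ for some constant $C'$. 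The same localization argument then gives $Y_t \in \Omega$ for all $t\geq 0$.

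The main subtlety is a technical one: rigorously setting up It\^o's formula for the singular drift requires stopping at the level sets $\{w \leq n\}$ and verifying that the local martingale contribution is a true martingale on each stopped interval (standard, since on $\{w\leq n\}$ all coefficients are smooth and bounded). A secondary point is confirming that the mirror-coupling drift does not create issues at the cut locus of $M$; this is automatic because $\Omega$ is strictly convex, so $\gamma_{X_t,Y_t}$ stays inside $\Omega$ and avoids the cut locus as long as $X_t, Y_t \in \Omega$, meaning the two verifications reinforce each other.
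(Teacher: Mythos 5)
Your Lyapunov argument is correct and is essentially the standard barrier argument that the paper defers to by citing Lemma 3.2 of \cite{cho2023probabilistic}: one applies It\^o's formula to a function of $u_1$ that blows up at $\partial\Omega$, uses the eigenvalue equation to cancel the singular part of the generator against the singular drift, and localizes. Two small corrections. First, a sign slip: from $\Delta u_1/u_1 = V-\lambda_1\rho$ one gets $\Delta_g w = \lambda_1\rho - V + |\nabla v|^2$, not $(V-\lambda_1\rho)+|\nabla v|^2$, so the correct bound is $L_X w \le \lambda_1\max_\Omega\rho - \min_\Omega V - |\nabla v|^2$; the constant is different from yours but finite, which is all that matters. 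Second, the coupling drift $+2\alpha\tn_K(\tilde\xi)\tilde\gamma'$ appears in the equation for $U_t$ as well as (with opposite sign) for $V_t$, so the Young-inequality absorption you perform for $Y_t$ is needed for $X_t$ too; since the treatment is symmetric this is cosmetic. It is also worth noting that your barrier $w=-\log u_1$ only uses $u_1>0$ in $\Omega$ and $u_1=0$ on $\partial\Omega$, whereas the paper's citation additionally invokes $\inf_{\partial\Omega}|\nabla u_1|>0$ (via the Hopf lemma, Lemma 3.4 of \cite{gilbarg1977elliptic}); your route sidesteps that ingredient, which is a mild simplification rather than a gap.
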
 
  \begin{proof}
      The proof is nearly identical to Lemma 3.2 of \cite{cho2023probabilistic}. In order to follow their proof, one needs to justify that $\inf_{\partial \Omega}|\nabla u_1|>0$ which follows from Lemma 3.4 of \cite{gilbarg1977elliptic}.
  \end{proof}
We then consider the 'coupling time` $\tau$, which is defined to be the smallest time at which the diffusions collide
\begin{equation}
    \tau :=
\inf \{t \geq 0 : X_t = Y_t \}.
\end{equation}
The second lemma shows that this time is finite almost surely.
  \begin{lem}\label{lem: finite-coupling-time}
      For any $X_0,Y_0 \in \Omega,$ we have that $\tau <\infty $ almost surely.
  \end{lem}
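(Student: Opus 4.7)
The plan is to adapt the argument of \cite[Lemma 3.3]{cho2023probabilistic} to the weighted Schr\"odinger setting considered here. The key observation is that the mirror coupling in \eqref{SDE-3} is designed precisely so that the distance process $\xi_t = d(X_t, Y_t)/2$ is a scalar semimartingale with nonvanishing diffusion coefficient and a drift one can control; since $\tau$ is the hitting time of $0$ for $\xi_t$, it suffices to show that $\xi_t$ reaches $0$ in finite time almost surely.

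First I would apply It\^o's formula to the horizontal lift of the distance function along $(U_t, V_t)$. Because the mirror map $m_{x,y}$ reverses the component of the noise tangent to $\gamma_{x,y}$ at $y$, the contributions of the two Brownian motions add in the radial direction and cancel in the transverse directions, yielding an SDE of the form
\[
d\xi_t \;=\; \sqrt{2}\, d\beta_t \;+\; \bigl(b_{\mathrm{curv}}(\xi_t) + \tfrac{1}{2}\mathcal F_{\nabla v}(X_t, Y_t) - 2\alpha\, \tn_K(\xi_t)\bigr)\, dt,
\]
where $\beta_t$ is a standard one-dimensional Brownian motion and $b_{\mathrm{curv}}$ is the second-variation contribution (which vanishes when $K=0$ and is smooth on $(0, D/2]$ when $K>0$, since for a convex $\Omega \subset \mathbb M^N_K$ the diameter lies below the injectivity radius). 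The curvature and coupling pieces of the drift are bounded on any subinterval of the form $[\varepsilon, D/2]$, and $\mathcal F_{\nabla v}(X_t, Y_t)$ is bounded on compact subsets of $\Omega \times \Omega$.

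The main obstacle is that $\nabla v = \nabla \log u_1$ blows up as either $X_t$ or $Y_t$ approaches $\partial \Omega$, so $\mathcal F_{\nabla v}$ is not globally bounded. I would handle this by localization: set
\[
\sigma_n \;=\; \inf\bigl\{t \ge 0 : \min\{\mathrm{dist}(X_t, \partial\Omega),\, \mathrm{dist}(Y_t, \partial\Omega)\} \le 1/n\bigr\}.
\]
On $[0, \sigma_n]$ the drift is bounded in absolute value by some constant $C_n$, so by a standard one-dimensional comparison theorem $\xi_{t \wedge \sigma_n}$ is stochastically dominated by the diffusion $d\hat{\xi}_t = \sqrt{2}\, d\hat{\beta}_t + C_n\, dt$ on the bounded interval $[0, D/2]$, whose hitting time of $0$ is almost surely finite. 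Hence $\tau \wedge \sigma_n < \infty$ almost surely for each $n$. Combining this with the Hopf-type lower bound $\inf_{\partial \Omega} |\nabla u_1| > 0$ invoked in the proof of Lemma \ref{lem:process} (which ensures the inward-pointing drift $\nabla v$ prevents the processes from hitting $\partial\Omega$), one obtains $\sigma_n \to \infty$ almost surely as $n \to \infty$. Letting $n \to \infty$ then yields $\tau < \infty$ almost surely.
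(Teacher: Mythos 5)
The paper does not actually reprove this lemma; it only cites Lemma 3.6 of \cite{cho2023probabilistic}, so your reconstruction has to stand on its own, and it does not close. The fatal step is the last one. For each $n$ you establish (at best) that $\tau\wedge\sigma_n<\infty$ almost surely, and separately that $\sigma_n\to\infty$ almost surely. These two facts do not imply $\tau<\infty$: on the event $\{\tau=\infty\}$, the statement $\tau\wedge\sigma_n<\infty$ merely forces $\sigma_n<\infty$ for every $n$, which is perfectly compatible with $\sigma_n\uparrow\infty$ (schematically, $\sigma_n=n$ and $\tau=\infty$ satisfy everything you proved). Nothing in your argument rules out the scenario in which, each time $\xi_t$ approaches $0$, one of the two particles wanders into the $1/n$-collar of $\partial\Omega$ where your drift bound is lost and the coupling is postponed forever. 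One could try to rescue the scheme by proving $\mathbb E[\tau\wedge\sigma_n]\le C$ \emph{uniformly} in $n$ and then using monotone convergence (note $\tau\wedge\sigma_n\uparrow\tau$), but your constant $C_n$ blows up as $n\to\infty$ precisely because $\nabla\log u_1$ blows up at $\partial\Omega$, so the localization yields no uniform bound. A correct proof needs either a global control of the radial drift $\tfrac12\mathcal F_{\nabla v}(X_t,Y_t)$ up to the boundary, or a recurrence-plus-Borel--Cantelli argument showing the pair returns infinitely often to a fixed compact subset of $\Omega\times\Omega$ with a uniformly positive probability of coupling on each return; this is the content of the cited argument in \cite{Gong-Li-Luo2016,cho2023probabilistic}, not a routine localization.

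There is also a smaller error in the per-$n$ step: the comparison process $d\hat\xi_t=\sqrt2\,d\hat\beta_t+C_n\,dt$ with $C_n>0$ does \emph{not} hit $0$ almost surely (it does so only with probability $e^{-C_n\hat\xi_0}$), so stochastic domination by it proves nothing; the phrase ``on the bounded interval $[0,D/2]$'' does not confine the comparison process. What one actually needs is the confinement $\xi_t\le D/2$ of the \emph{original} process together with a Lyapunov function such as $e^{-a\xi}$ with $a>C_n$, which gives $\mathbb E[\tau\wedge\sigma_n]<\infty$. That repairs the per-$n$ conclusion but, as explained above, not the passage to the limit.
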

The proof is identical to the proof of Lemma 3.6 in \cite{cho2023probabilistic}.

 With these two lemmas in hand, we now use It\^o's formula to obtain the following proposition.
\begin{prop}\label{prop: Itos formula for distance}
   For $t < \tau,$ we have that 
      \begin{align}\label{SDE for xi}
          d \xi_t = \sqrt 2d \beta_t+ \left\{-(N-1+2\alpha )\tn_K(\xi_t)+F_t\right\}dt,
      \end{align}
      where $\beta_t$ is a one-dimensional Brownian motion and  $\xi_t = d(X_t,Y_t)/2.$ 
      In particular, for any smooth function $\phi,$ we have that 
      \begin{align}\label{SDE of phi(xi)}
d\phi(\xi_t)=\sqrt{2}\phi'(\xi_t)d\beta_t+\left\{-(N-1+2\alpha)\tn_K(\xi_t)\phi'(\xi_t)+\phi'(\xi_t)F_t+\phi''(\xi_t) \right\}dt.
      \end{align}
  \end{prop}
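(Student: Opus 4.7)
The plan is to apply It\^o's formula to the function $d(X_t, Y_t)$ along the coupled diffusion $(X_t, Y_t) = (\pi U_t, \pi V_t)$. By Lemma \ref{lem:process} the processes remain in $\Omega$, and on $\{t<\tau\}$ they are distinct, so the minimizing geodesic $\gamma_{X_t,Y_t}$ and the distance function are smooth at $(X_t, Y_t)$. Given \eqref{SDE for xi}, the identity \eqref{SDE of phi(xi)} is an immediate consequence of a second application of It\^o's formula using the quadratic variation $d\langle \xi\rangle_t = 2\,dt$, so the entire content of the proposition is in \eqref{SDE for xi}.

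I would split $d[d(X_t, Y_t)]$ into four contributions and halve at the end. Let $\{e_i\}_{i=1}^N$ denote the parallel-transported frame along $\gamma_{X_t, Y_t}$ with $e_N$ the unit geodesic tangent, so $\nabla_X d = -e_N(X_t)$ and $\nabla_Y d = e_N(Y_t)$, and recall that $m_{X_t, Y_t}$ fixes $e_i$ for $i<N$ and sends $e_N \mapsto -e_N$. For the martingale part, the tangential Brownian contributions from $X_t$ and $Y_t$ then cancel (synchronous coupling) while the normal contributions reinforce to give $-2\sqrt{2}\,dB^N_t$; setting $\beta_t := -B^N_t$, which is a standard Brownian motion by L\'evy's characterization, the martingale part of $d\xi_t$ is $\sqrt{2}\,d\beta_t$. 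For the first-order drift, the $2H_{\nabla v}$ terms in \eqref{SDE-3} contribute $2(v_N(Y_t) - v_N(X_t)) = 2\mathcal F_{\nabla v}(X_t, Y_t)$ to $d[d]$, which produces $F_t := \mathcal F_{\nabla v}(X_t, Y_t)$ in $d\xi_t$, while the coupling terms $\pm 2\alpha\tn_K(\xi_t)\tilde\gamma'$ push the two processes oppositely along $\gamma$ and contribute $-4\alpha\tn_K(\xi_t)$ to $d[d]$, i.e.\ $-2\alpha\tn_K(\xi_t)$ in $d\xi_t$.

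The main calculation is the second-order It\^o drift coming from the Brownian part of the generator, namely $\Delta_X d + \Delta_Y d + 2\sum_{i=1}^N \nabla^X_{e_i}\nabla^Y_{m(e_i)} d$. The pure Hessian pieces give $2(N-1)\sqrt{K}\cot(\sqrt{K}d)$ using $\text{Hess}_x d(e_i, e_i) = \sqrt{K}\cot(\sqrt{K}d)$ for $i<N$ and $=0$ for $i = N$. For the mixed terms with $i<N$, I would use the variations $\eta_i(r,s)$ and Jacobi fields $J_i(s) = \cs_K(s)/\cs_K(d/2)\,e_i$ from Subsection \ref{subsection two-point-derivatives}: since $\eta_i(r,\cdot)$ is a geodesic for each $r$, the second derivative $\frac{d^2}{dr^2}d(\sigma_1(r), \sigma_2(r))\vert_{r=0}$ equals the index form $I(J_i, J_i) = \langle J_i', J_i\rangle\vert_{-d/2}^{d/2} = -2\tn_K(d/2)$, and subtracting the two pure Hessian contributions yields $\nabla^X_{e_i}\nabla^Y_{e_i} d = -\tn_K(d/2) - \sqrt{K}\cot(\sqrt{K}d)$. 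The $i = N$ mixed term vanishes since $m(e_N) = -e_N$ and $d(\gamma(-d/2 + r), \gamma(d/2 + r)) \equiv d$, so its Hessian contribution is zero. Summing, the $\sqrt{K}\cot(\sqrt{K}d)$ pieces cancel between the pure and mixed parts, and the total second-order drift of $d[d(X_t, Y_t)]$ reduces cleanly to $-2(N-1)\tn_K(d/2)$; dividing by $2$ produces the $-(N-1)\tn_K(\xi_t)$ term in \eqref{SDE for xi}. The main obstacle is precisely this Jacobi field bookkeeping for the mixed Hessian of $d$ in the mirror-coupled product; once these identities are verified, the four pieces assemble directly into \eqref{SDE for xi}.
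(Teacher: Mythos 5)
Your proposal is correct and follows essentially the same route as the paper, which simply cites It\^o's formula for the lifted distance function (Hsu, Section 6.6) together with the second-variation identity $\sum_{i}\nabla^2_{E_i,E_i}\xi=-(N-1)\tn_K(\xi)$; your computation of the mirror-coupled mixed Hessian via the index form of the Jacobi fields $J_i=\cs_K(s)/\cs_K(d/2)\,e_i$ is exactly the content of that cited identity, and your drift and martingale bookkeeping matches. No gaps.
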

  \begin{proof}
      \eqref{SDE of phi(xi)} follows from \eqref{SDE for xi} by the standard It\^o's formula. On the other hand, \eqref{SDE for xi} follows from applying It\^o's formula to the lift $\tilde \xi_t = \xi_t\circ (\pi, \pi)$ (see also Section 6.6 in \cite{hsu2002stochastic} for a similar computation) and the well-known second variation of distance in $\mathbb M^N_K$
      \begin{align*}
          \sum_{i = 1}^N{\nabla ^2_{E_i,E_i}}\xi(x,y) = -(N-1) \tn_K(\xi(x,y)).
      \end{align*}
  \end{proof}


\subsection{Proof of Proposition \ref{thm: super-log-concavity-general}}
Now that we have defined the relevant stochastic equations, our next goal is to show that the first eigenfunction satisfies a log-concavity estimate. In this section, we use the diffusion $(X_t, Y_t)$ for $\alpha =1.$
  Let $\gamma_{X_t,Y_t}$ be the normal minimal geodesic that goes from $X_t$ to $Y_t$ with $\gamma_{X_t,Y_t}(-\xi_t) = X_t$ and $\gamma_{X_t,Y_t}(\xi_t) = Y_t$.
We define 
\begin{equation}
\label{F_s}
F_t= Z(X_t,Y_t).
\end{equation}
We first derive the SDE for $F_t$, which we obtain from It\^o's formula.
	\begin{prop}\label{prop: SDE for F}
	Let $\lambda = \lambda_1(\Omega, \rho, V)$ be the first eigenvalue of the problem \eqref{eqn: Weighted eigenfunction equation} on a convex domain $\Omega \subset \mathbb M^n_K$ and $v=\log u_1$. Then $F_t$ satisfies
	\begin{align}
	dF_t&=d\{\textup{martingale}\}\nonumber\\
	&\quad +\mathcal F_{\nabla [-\lambda \rho + V]}(X_t, Y_t)+2\textup{tn}_K(\xi_t)\mathcal C_{[\lambda \rho -V]}(X_t, Y_t) dt \nonumber\\
	&\quad  +(N-1)(K-\textup{tn}^2_K(\xi_t))F_tdt\nonumber\\
	& \quad +2\textup{tn}_K(\xi_t)\left(\langle  \nabla v(Y_t), e_N  \rangle^2+\langle \nabla v(X_t), e_N \rangle^2\right)dt\nonumber\\
	&\quad +\frac{2}{\textup{sn}_K(2\xi_t)}\sum_{i=1}^{N-1}\left(\langle  \nabla v(Y_t), e_i  \rangle-\langle \nabla v(X_t), e_i  \rangle \right)^2dt.\label{dF for log-concavity}
	\end{align}
	\end{prop}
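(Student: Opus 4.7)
The plan is to lift the computation to the orthonormal frame bundle and apply It\^o's formula directly to $\tilde Z(U_t,V_t)$, where $\tilde Z = Z\circ(\pi,\pi)$ is the lift of $Z$ defined in \eqref{defn: of Z}. Since $F_t = \tilde Z(U_t,V_t)$ and the horizontal vector fields on $O(M)\times O(M)$ push down to directional derivatives on $M\times M$, the coupled SDE \eqref{SDE-3} with $\alpha=1$ will produce three kinds of terms: a martingale from the Brownian increments, a first-order drift from the $H_{\nabla v}$ and $\pm 2\tn_K(\xi_t)\tilde\gamma'$ drifts, and a quadratic-variation drift from the $\sqrt{2}$-scaled Brownian parts together with the mirror coupling $dW_t = V_t m_{X_tY_t}(U_t)^{-1}dB_t$.

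First I would identify the drift contributions. The $H_{\nabla v}$ terms contribute $2\nabla_{\nabla v(X_t)\oplus \nabla v(Y_t)}Z\,dt$, and the coupling drift, which at the base point of $U_t$ is the geodesic tangent $e_N$ and at the base point of $V_t$ is its mirror $-e_N$ (so that $(\gamma'(X_t),-\gamma'(Y_t))=E_N$), contributes $2\tn_K(\xi_t)\nabla_{E_N}Z\,dt$. Next, for the It\^o correction, the mirror coupling is designed precisely so that along the horizontal directions $\{e_i\}_{i<N}$ the two Brownian motions are parallel-transported copies, while along $e_N$ they are opposites; combined with the $\sqrt{2}$ scaling this yields exactly $\sum_{i=1}^N\nabla^2_{E_i,E_i}Z\,dt$, with $E_i=e_i\oplus e_i$ for $i<N$ and $E_N=e_N\oplus(-e_N)$ matching the frame chosen in Subsection \ref{subsection two-point-derivatives}.

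The core of the argument is then to substitute the identity of Proposition \ref{Second-order-derivative} for $\sum_i\nabla^2_{E_i,E_i}Z$ into the It\^o expansion. The two drift terms just identified appear with opposite signs in that identity, so the $2\nabla_{\nabla v\oplus\nabla v}Z$ and $2\tn_K(\xi_t)\nabla_{E_N}Z$ terms cancel exactly. What remains is
\[
(N-1)\bigl(K-\tn_K^2(\xi_t)\bigr)F_t -2\tn_K(\xi_t)\mathcal C_{[V-\lambda_1\rho]}(X_t,Y_t) +\mathcal F_{\nabla[V-\lambda_1\rho]}(X_t,Y_t)
\]
together with the squared-gradient terms in $v_N^2$ and $(v_i(Y_t)-v_i(X_t))^2$. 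Rewriting $-2\tn_K\,\mathcal C_{[V-\lambda_1\rho]} = 2\tn_K\,\mathcal C_{[\lambda_1\rho-V]}$ and using $\sn_K(d) = \sn_K(2\xi_t)$ yields \eqref{dF for log-concavity} exactly.

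The main obstacle is a bookkeeping one rather than a conceptual one: carefully tracking the mirror map so that the second-order It\^o term produces the specific frame $\{E_i\}$ used in Proposition \ref{Second-order-derivative}, and ensuring the signs of the coupling drift align with $E_N$. Lemma \ref{lem:process} guarantees the processes stay in $\Omega$ so $v=\log u_1$ is smooth along the trajectory up to time $\tau$, and the classical horizontal-lift formalism (cf.\ \cite{hsu2002stochastic}) makes It\^o's formula on $O(M)\times O(M)$ rigorous; no additional regularity or boundary argument is required for $t<\tau$.
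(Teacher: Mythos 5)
Your proposal is correct and follows essentially the same route as the paper: lift $Z$ to $O(M)\times O(M)$, apply It\^o's formula to the coupled diffusion with $\alpha=1$, identify the drift as $\sum_i\nabla^2_{E_i,E_i}Z+2\nabla_{\nabla v\oplus\nabla v}Z+2\tn_K(\xi_t)\nabla_{E_N}Z$, and substitute Proposition \ref{Second-order-derivative} so that the first-order terms cancel. You even carry the coefficient $2\tn_K(\xi_t)$ on the $\nabla_{E_N}Z$ drift correctly, which the paper's displayed formula appears to omit as a typo.
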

Here, `martingale' is a martingale which does not need to be specified for the purposes of this proof. However, it can be written out explicitly.

To show this, we let $\tilde Z $ be the lift of $Z$ onto $O(M) \times O(M).$ In other words, $Z$ satisfies $\tilde Z\circ \pi = Z.$  Since $(U_t,V_t)$ solves the SDE  \eqref{SDE-3}, and since $dF_t = d\tilde Z (U_t,V_t),$ we find that  
     \begin{align*}
         d  F_t &= d \{\textup{martingale}\}+ \left\{\sum_{ i= 1}^N \nabla^2_{E_i,E_i}Z(X_t,Y_t)+2\nabla_{\nabla v(x) \oplus \nabla v(x)}Z+\nabla_{E_N}Z\right\}dt, 
     \end{align*}
     (see \cite{hsu2002stochastic} Section 6 or \cite{cho2023probabilistic} Proposition 4.1 for a similar computation). Then the claim follows from Proposition \ref{Second-order-derivative}. We are now in the position to prove Proposition \ref{thm: super-log-concavity-general}.

\begin{proof}[Proof of Proposition \ref{thm: super-log-concavity-general}]
We first note that from Equation \ref{dF for log-concavity}, since $\overline \rho$ is a modulus of concavity for $\rho$ and $\overline V$ a modulus of convexity for $V,$ we have that
\begin{align}
    dF_t- d\{\textup{martingale}\}\geq &\left\{-2\lambda_1\overline \rho'(\xi_t)+4\lambda_1 \textup{tn}_K(\xi_t)\min_\Omega \rho \right\}dt \nonumber\\
     &+\left\{2\overline V'(\xi_t)-4 \textup{tn}_K(\xi_t)\max_\Omega V \right\}dt \nonumber\\
	& +(N-1)(K-\textup{tn}^2_K(\xi_t))F_tdt\nonumber\\
	& +2\textup{tn}_K(\xi_t)\left(\langle  \nabla v(Y'_t), e_n  \rangle^2+\langle \nabla v(X'_t), e_n  \rangle^2\right)dt\nonumber\\
	\geq&\left\{-2\lambda_1\overline \rho'(\xi_t)+4\lambda_1 \textup{tn}_K(\xi_t)\min_\Omega \rho \right\}dt \nonumber\\
     &+\left\{2\overline V'(\xi_t)-4 \textup{tn}_K(\xi_t)\max_\Omega V \right\}dt \nonumber\\
	& +\left\{(N-1)[K-\textup{tn}^2_K(\xi_t)]+\textup{tn}_K(\xi_t)F_t\right\}F_tdt,
\label{Step1}
\end{align}
where we used the inequality
 \[
    \langle  \nabla v(Y_t), e_N  \rangle^2+\langle \nabla v(X_t), e_N  \rangle^2 \ge \frac{\left(\langle  \nabla v(Y_t), e_N  \rangle - \langle \nabla v(X_t), e_N  \rangle \right)^2 }{2} = \frac{F_t^2}{2}
                \] 
in the second inequality above. 
On the other hand, in view of Proposition \ref{prop: Itos formula for distance}, we get that 
\begin{align}\label{dradial}
&d\left\{(2\psi(\xi_t)+(N-1)\textup{tn}_K(\xi_t)\right\}-d\{\text{martingale}\}\\ 
=&\left\{-2(N+1)\textup{tn}_K(\xi_t)\psi'+2\psi''-(N-1)^2[K+\textup{tn}_K^2(\xi_t)]\textup{tn}_K(\xi_t)+ \left((N-1)(K+\textup{tn}_K^2(\xi_t))+2\psi'\right)F_t \right\}dt.\nonumber
 \end{align}				
Combining \eqref{dF for log-concavity} and \eqref{dradial} using \eqref{condition-2}, we have that
\begin{align*}
&d\left\{F_t-2\psi(\xi_t)-(N-1)\textup{tn}_K(\xi_t)\right\}-d\{\textup{martingale}\}\\
	\geq&\left\{-2\lambda_1\overline \rho'(\xi_t)+4\lambda_1 \textup{tn}_K(\xi_t)\min_\Omega \rho +2\overline V'(\xi_t)-4 \textup{tn}_K(\xi_t)\max_\Omega V\right\}dt \nonumber\\
     &+\left\{ 2(N+1)\textup{tn}_K(\xi_t)\psi'-2\psi''+(N-1)^2[K+\textup{tn}_K^2(\xi_t)]\textup{tn}_K(\xi_t)\right\}dt \nonumber\\
	& +\left\{-2(N-1)\textup{tn}^2_K(\xi_t)-2\psi'(\xi_t)+\textup{tn}_K(\xi_t)F_t\right\}F_tdt\\
 \geq &\left\{ 2(N-1)\textup{tn}_K(\xi_t)\psi'+4\psi\psi'-4\textup{tn}_K(\xi_t)\psi^2+(N-1)^2\textup{tn}_K^3(\xi_t)\right\}dt \nonumber\\
	& +\left\{-2(N-1)\textup{tn}^2_K(\xi_t)-2\psi'(\xi_t)+\textup{tn}_K(\xi_t)F_t\right\}F_tdt\\
 \geq &\left\{ 2(N-1)\textup{tn}_K(\xi_t)\psi'+4\psi\psi'-4\textup{tn}_K(\xi_t)\psi^2+(N-1)^2\textup{tn}_K^3(\xi_t)\right\}dt \nonumber\\
  &+\left\{-2(N-1)\textup{tn}^2_K(\xi_t)-2\psi'(\xi_t)+\textup{tn}_K(\xi_t)F_t\right\}\left(2\psi +(N-1)\textup{tn}_K(\xi_t)\right)dt\\
	& +\left\{-2(N-1)\textup{tn}^2_K(\xi_t)-2\psi'(\xi_t)+\textup{tn}_K(\xi_t)F_t\right\}\left(F_t-2\psi -(N-1)\textup{tn}_K(\xi_t)\right)dt\\
 =& \left\{-(N-1)\textup{tn}^2_K(\xi_t)-2\psi'(\xi_t)+\textup{tn}_K(\xi_t)(F_t+2\psi)\right\}\left(F_t-2\psi -(N-1)\textup{tn}_K(\xi_t)\right)dt\\
\end{align*}
For convenience, we write $\Psi = \psi +\tfrac{N-1}{2}\textup{tn}_K$ 
\begin{align*}
&d\left\{ e^{\int_{0}^{t}-(N-1)\textup{tn}^2_K(\xi_s)-2\psi'(\xi_s)+\textup{tn}_K(\xi_s)(F_s+2\psi)ds}(F_t-2\Psi(\xi_t))  \right\}\\
&\geq e^{\int_{0}^{t}-(N-1)\textup{tn}^2_K(\xi_s)-2\psi'(\xi_s)+\textup{tn}_K(\xi_s)(F_s+2\psi)ds}d\{\text{martingale}\}.
\end{align*}
Letting $T>0$ be some positive cut-off time, we integrate from $0$ to  $\min\{\tau, T \}$ (denoted $\tau\wedge T$), which gives 
\begin{equation*}
\left( e^{\int_{0}^{\tau\wedge T}-(N-1)\textup{tn}^2_K(\xi_s)-2\psi'(\xi_s)+\textup{tn}_K(\xi_s)(F_s+2\psi)ds}(F_{\tau\wedge T}-2\Psi(\xi_{\tau\wedge T})  \right)\geq \left(F_0-2\Psi(\xi_0)\right)+\text{Martingale}_{\tau\wedge T}.
\end{equation*}
Since $\tau \wedge T$ is bounded, the stopped martingale `$\text{Martingale}_{\tau\wedge T}$' in the above inequality is another martingale. Therefore, we can take the expected value to obtain
\begin{align*}
F_0-2\Psi(\xi_0)&\leq \mathbb{E}\left( e^{\int_{0}^{\tau\wedge T}-(N-1)\textup{tn}^2_K(\xi_s)-2\psi'(\xi_s)+\textup{tn}_K(\xi_s)(F_s+2\psi)ds}(F_{\tau\wedge T}-2\Psi(\xi_{\tau'\wedge T})  \right)\\
&\leq \mathbb{E}\left( e^{\int_{0}^{\tau\wedge T}-(N-1)\textup{tn}^2_K(\xi_s)-2\psi'(\xi_s)+\textup{tn}_K(\xi_s)(F_s+2\psi)ds}|F_{\tau\wedge T}-2\Psi(\xi_{\tau\wedge T})|\right).
\end{align*}
By Fatou's Lemma, we have that
\begin{equation*}
F_0-2\Psi(\xi_0)\leq \mathbb{E}\left( \liminf_{T\rightarrow \infty} e^{\int_{0}^{\tau\wedge T}-(N-1)\textup{tn}^2_K(\xi_s)-2\psi'(\xi_s)+\textup{tn}_K(\xi_s)(F_s+2\psi)ds}|F_{\tau\wedge T}-2\Psi(\xi_{\tau\wedge T})|\right).
\end{equation*}
Since $\tau<\infty$ almost surely, we find that $\tau \wedge T\rightarrow \tau$ as $T\rightarrow \infty$. Since  $F_{\tau}=\Psi(\xi_{\tau})=0$ and since $\tau<\infty$ a.s.
\begin{align*}
&\mathbb{E}\left( \liminf_{T\rightarrow \infty} e^{\int_{0}^{\tau\wedge T}-(N-1)\textup{tn}^2_K(\xi_s)-2\psi'(\xi_s)+\textup{tn}_K(\xi_s)(F_s+2\psi)ds}|F_{\tau\wedge T}-2\Psi(\xi_{\tau\wedge T})| \right)\\
&=\mathbb{E}\left(  e^{\int_{0}^{\tau}-(N-1)\textup{tn}^2_K(\xi_s)-2\psi'(\xi_s)+\textup{tn}_K(\xi_s)(F_s+2\psi)ds}|F_{\tau}-2\Psi(\xi_{\tau})| \right)=0.
\end{align*}
We conclude that
\begin{equation*}
F_0-2\psi(\xi_0)-(N-1)\textup{tn}_K(\xi_0)\leq 0,
\end{equation*}
as desired.
\end{proof}
\subsubsection{Sketch of Maximum Principle Proof}
\label{Sketch of Maximum Principle Proof}

From Proposition \ref{Second-order-derivative}, we can understand the behaviour of $Z$ at a maximal point. In particular, it is straightforward to obtain the following theorem. We simply sketch a proof here.
\begin{prop}
    Under the same assumptions as in Theorem \ref{thm: super-log-concavity-general}, assume that $\psi_0+\tfrac{N-1}{2}\tn_K: [0,D/2] \rightarrow$ is a modulus of concavity of $v.$ Then if $\psi$ is a smooth solution to the problem 
    \begin{align*}
        \begin{cases}
            \tfrac{\partial \psi}{\partial t} \geq \psi'' + 2 \psi'\psi +\overline \lambda\overline \rho' - \overline V'-2\tn_K(s) \left( \psi'+\psi^2+\overline \rho -\overline V\right)\\
            \psi(t,0) = 0\\
            \psi(0, \cdot) = \psi_0,
        \end{cases}
    \end{align*}
    then we have that $\mathcal F_{\nabla v}(x,y) \leq 2 \psi(t,\tfrac d2)+(N-1)\tn_K(\tfrac d2)$ for all $x,y \in \Omega$ and $t \geq 0.$
\end{prop}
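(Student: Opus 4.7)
The plan is to argue by contradiction using the parabolic maximum principle applied to the two-point function
\begin{align*}
    \Phi(t,x,y) := Z(x,y) - 2\psi\bigl(t,\tfrac{d(x,y)}{2}\bigr) - (N-1)\tn_K\bigl(\tfrac{d(x,y)}{2}\bigr)
\end{align*}
on $[0,\infty) \times \textrm{Conf}_2(\Omega)$, where $Z = \mathcal{F}_{\nabla v}$ as in \eqref{defn: of Z}. The hypothesis on $\psi_0$ yields $\Phi(0,\cdot,\cdot)\leq 0$, so the goal is to preserve this inequality in time. Suppose for contradiction that $\Phi$ becomes positive at some later time, and let $t_0 > 0$ be the first such time, with $(x_0,y_0)$ an interior maximizer where $\Phi(t_0,x_0,y_0) = 0$. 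A preliminary observation is that the maximum is achieved in the interior: near $\partial\Omega$, $v = \log u_1$ tends to $-\infty$, and a Hopf-type gradient bound (cf.\ Lemma 3.4 of \cite{gilbarg1977elliptic}) shows $Z\to -\infty$ as $(x,y)$ approaches $\partial(\textrm{Conf}_2(\Omega))$; on the diagonal $d\to 0$, the inequality reduces to a pointwise Hessian estimate.

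At the maximal point $(t_0,x_0,y_0)$ we have the parabolic inequality $\partial_t\Phi\geq 0$ together with the first-order conditions $\nabla_{E_i}\Phi = 0$ for each $i$, and the second-order condition that the Hessian of $\Phi$ along the frame $\{E_i\}$ is negative semidefinite; in particular $\sum_{i=1}^N\nabla^2_{E_i,E_i}\Phi \leq 0$. Since the subtracted expression depends on $(x,y)$ only through $d(x,y)$, the transverse first-order conditions (for $i<N$) annihilate the nonnegative correction $\frac{2}{\sn_K(d)}\sum_{i<N}(v_i(y)-v_i(x))^2$ in Proposition \ref{Second-order-derivative}, while the $E_N$ first-order condition yields
\begin{align*}
    \nabla_{E_N}Z(x_0,y_0) = -\bigl[2\psi'(t_0,\tfrac{d_0}{2}) + (N-1)(\tn_K)'(\tfrac{d_0}{2})\bigr].
\end{align*}
Combining these with the equality $Z(x_0,y_0) = 2\psi(t_0,d_0/2) + (N-1)\tn_K(d_0/2)$ lets one decompose the vector $\nabla v(x_0)\oplus\nabla v(y_0)$ into its tangential and transverse components, so that the directional derivative $-2\nabla_{\nabla v(x)\oplus\nabla v(y)}Z$ reduces to an expression polynomial in $\psi$ and $\psi'$, producing the $2\psi'\psi$ cross term in the $\psi$-evolution.

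Substituting all of this, along with the moduli-of-concavity and convexity bounds on $\mathcal{C}_{V-\lambda_1\rho}$ and $\mathcal{F}_{\nabla(V-\lambda_1\rho)}$, into $\sum_i\nabla^2_{E_i,E_i}\Phi\leq 0$ via Proposition \ref{Second-order-derivative} produces exactly the right-hand side of the differential inequality hypothesized for $\psi$; coupled with $\partial_t\Phi(t_0,x_0,y_0)\geq 0$, this contradicts the strict version of that inequality and completes the proof.

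The main obstacle is the algebraic bookkeeping: one must match term-by-term every piece in Proposition \ref{Second-order-derivative} against a corresponding term in the $\psi$-ODE. In particular, the identity $(\tn_K)' = K + \tn_K^2$ is needed to combine the curvature contribution $(N-1)[K-\tn_K^2(d/2)]Z$ with the squared-tangential-gradient piece $2\tn_K(d/2)[v_N^2(x)+v_N^2(y)]$ and the ``$\psi''$'' contribution arising from $\sum_i\nabla^2_{E_i,E_i}(d/2)$, and one must verify that the remaining $\tn_K$-dependent residue matches $-2\tn_K(s)(\psi' + \psi^2 + \overline\rho - \overline V)$ exactly. A secondary technical point is the standard issue that the parabolic maximum principle usually needs strict inequality at the first contact time, which is handled by Hamilton's trick of replacing $\Phi$ with $\Phi - \epsilon t$ and sending $\epsilon\to 0^+$.
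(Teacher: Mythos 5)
Your proposal is correct and follows essentially the same route as the paper's own (very brief) proof: a first-touching-time maximum-principle argument on the two-point function, using the first-order conditions at the interior maximum together with Proposition \ref{Second-order-derivative} to contradict the evolution inequality for $\psi$. The only adjustment worth making is in the perturbation: the paper subtracts $\varepsilon e^{Ct}$ rather than $\varepsilon t$, since at the touching point the zeroth-order terms proportional to $\Phi$ itself contribute at order $\varepsilon e^{Ct_0}$, and one needs $C$ large enough that the time derivative of the perturbation dominates them uniformly in $t_0$ — the linear-in-$t$ version can fail for large times.
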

\begin{proof}
    Consider $Z_\varepsilon (x,y,t) = \mathcal F_{\nabla v}(x,y) - 2\psi(t, \tfrac d2)-\varepsilon e^{Ct}$ and argue by contradiction, assuming there is a $(x_0,y_0,t_0)$ such that the maximum of $Z_\varepsilon$ is equal to zero.  Then at that point all first order derivatives vanish and the proof is exactly analogous to the one in \cite{10.4310/jdg/1559786428}.  
\end{proof}

\subsection{Spectral Gap Comparison}
We now prove a spectral gap comparison. To do so, we use the diffusions $(X_t, Y_t)$ from \eqref{SDE-3} with $\alpha = 0.$
   \begin{prop}\label{Gap-Comparison-Improved}
       Suppose that $\Omega \subset (M, g_{\mathbb M^N_K})$ is convex and suppose that 
       \begin{align*}
           \mathcal F_{\nabla v}(x,y)\leq 2(\log \overline \varphi_1)' (\tfrac{ d(x,y)}{2})+(N-1)\textup{tn}_K(\tfrac{d(x,y)}{2}).
       \end{align*}
       Then we have that the fundamental gap of the problem \eqref{eqn: one-dimensional-model-weighted} satisfies 
       \begin{align}
           \Gamma(\Omega) \geq \frac{\min \overline \rho}{\max_\Omega \rho}\overline \Gamma(\overline \rho, \overline V).
       \end{align}
   \end{prop}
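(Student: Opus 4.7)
The argument is the stochastic counterpart of the Andrews--Clutterbuck and Dai--He--Seto--Wang--Wei maximum-principle comparison, and follows the strategy of \cite{cho2023probabilistic} applied to the coupled diffusion $(X_t, Y_t)$ from \eqref{SDE-3} specialized to $\alpha = 0$. With this choice, the marginal generator of each of $X_t$ and $Y_t$ is the self-adjoint operator $L := \Delta + 2\nabla v \cdot \nabla$, whose second eigenpair in the $\rho$-weighted Neumann problem is $(w, \Gamma)$ with $w := u_2/u_1$ and $Lw = -\Gamma \rho w$. Analogously, $\overline w := \overline\varphi_2/\overline\varphi_1$ satisfies $\overline w'' + 2(\log \overline\varphi_1)' \overline w' = -\overline\Gamma\,\overline\rho\,\overline w$ on $[-D/2, D/2]$ with Neumann boundary conditions; by the symmetry of \eqref{eqn: one-dimensional-model-weighted}, $\overline w$ is odd and monotone increasing on $[0, D/2]$.

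The core of the proof is the comparison of two It\^o differentials. Using the PDE for $w$ together with the vanishing of the mirror-coupling drift (since $\alpha = 0$),
\[ dG_t := d[w(Y_t) - w(X_t)] = d\{\textup{mart.}\} - \Gamma\bigl[\rho(Y_t) w(Y_t) - \rho(X_t) w(X_t)\bigr]\, dt. \]
On the other hand, applying \eqref{SDE of phi(xi)} with $\phi = \overline w$ to $H_t := 2\overline w(\xi_t)$, substituting the log-concavity hypothesis $F_t \leq 2(\log \overline\varphi_1)'(\xi_t) + (N-1)\tn_K(\xi_t)$ (admissible because $\overline w' \geq 0$), and invoking the one-dimensional eigenvalue ODE for $\overline w$ yields
\[ dH_t \leq d\{\textup{mart.}\} - \overline\Gamma\,\overline\rho(\xi_t)\, H_t\, dt. \]

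To combine these, I would normalize $w$ so that $\sup_\Omega w - \inf_\Omega w = 2\overline w(D/2)$ and initialize the coupling at extremizers $(x_0, y_0)$ of $w$. The orthogonality relation $\int_\Omega u_1 u_2\,\rho = 0$ forces $w$ to change sign, so these extrema lie on opposite sides of zero; in particular, at the initial configuration, $w(Y_0) \geq 0 \geq w(X_0)$. Using the sign bound $\rho(Y_t) w(Y_t) - \rho(X_t) w(X_t) \geq \min_\Omega \rho \cdot G_t$ (valid in the sign-coherent regime) together with the rough pointwise estimate $\overline\rho(\xi_t) \geq \min\overline\rho$ and $\rho \leq \max_\Omega\rho$, the two drift estimates combine to show that
\[ \Phi_t := \exp\!\Bigl(\tfrac{\min\overline\rho}{\max_\Omega\rho}\overline\Gamma\, t\Bigr)\bigl[H_t - G_t\bigr] \]
is a supermartingale up to the coupling time $\tau$. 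Since $\Phi_\tau = 0$ by Lemma \ref{lem: finite-coupling-time} and $\Phi_0 \leq 0$ by the extremizing normalization, taking expectations and letting the time horizon tend to infinity yields $\Gamma \cdot \max_\Omega \rho \geq \overline\Gamma \cdot \min\overline\rho$, the claimed inequality.

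The principal obstacle is the sign constraint: when $w(Y_t)$ and $w(X_t)$ temporarily share a sign, the lower bound $\rho(Y_t) w(Y_t) - \rho(X_t) w(X_t) \geq \min_\Omega\rho\cdot G_t$ degrades, and the supermartingale argument requires supplement. Following the parallel maximum-principle route of Subsubsection \ref{Sketch of Maximum Principle Proof}, this obstruction is sidestepped by instead working with the maximizer of the ratio $(w(y) - w(x))/(2\overline w(d(x, y)/2))$ over $\overline\Omega \times \overline\Omega$ (which exists by continuity of $w$ up to the boundary and positivity of $\overline w$ on $(0, D/2]$), reducing the stochastic comparison to a single-point inequality at the extremizing pair, in parallel with the argument of \cite{10.4310/jdg/1559786428}.
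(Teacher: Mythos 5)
Your first half is sound and matches the paper: with $\alpha=0$, applying \eqref{SDE of phi(xi)} to $\Phi=\overline\varphi_2/\overline\varphi_1$, inserting the hypothesis $F_t\leq 2(\log\overline\varphi_1)'(\xi_t)+(N-1)\tn_K(\xi_t)$ (using $\Phi'\geq 0$), and invoking the one-dimensional ODE gives $d\Phi(\xi_t)\leq d\{\textup{mart.}\}-\overline\Gamma\,\overline\rho(\xi_t)\Phi(\xi_t)\,dt$, hence $\mathbb{E}\Phi(\xi_t)\leq e^{-\overline\Gamma\min\overline\rho\,t}\Phi(\xi_0)$. The gap is in how you convert this decay into a bound on $\Gamma$. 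Your supermartingale $e^{ct}(H_t-G_t)$ does not close: to make $H_t-G_t$ a supermartingale you need an upper bound on the drift of $H_t$ \emph{and a lower bound on the drift of} $G_t$, but the eigenvalue equation only gives $\rho(Y_t)w(Y_t)-\rho(X_t)w(X_t)\geq \min_\Omega\rho\cdot G_t$ in the sign-coherent regime, which bounds the drift of $G_t$ from \emph{above}; both estimates point the same way, and the final deduction ``$\Phi_0\leq 0$, $\mathbb{E}\Phi_\tau=0$, therefore $\Gamma\max_\Omega\rho\geq\overline\Gamma\min\overline\rho$'' does not follow (a supermartingale with $\Phi_\tau=0$ only yields $\Phi_0\geq 0$, i.e.\ an equality at time $0$, not an eigenvalue inequality). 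You correctly identify the sign-coherence problem as fatal along general trajectories, but your proposed remedy --- maximizing the ratio $(w(y)-w(x))/(2\overline w(d/2))$ --- is a different (two-point maximum principle) argument that you do not carry out, and it would require regularity of $w$ and boundary derivative information at the extremal pair that you have not established.

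The paper avoids tracking $w(Y_t)-w(X_t)$ against the eigenvalue equation altogether. It first reduces via the Rayleigh quotient to the \emph{unweighted} Neumann eigenvalue $\overline\mu$ of the drift Laplacian $\Delta+2\nabla\log u_1\cdot\nabla$, so that $\Gamma(\Omega,\rho)\geq\overline\mu/\max_\Omega\rho$ and no factor $\rho(X_t),\rho(Y_t)$ ever appears inside the stochastic estimate. Then, writing $w(x,t)=e^{-\overline\mu t}w(x)=\mathbb{E}[w(X_t)]$ by Feynman--Kac (the generator of $X_t$ with $\alpha=0$ is exactly this drift Laplacian, and Lemma \ref{lem:process} keeps the process in $\Omega$), it uses only the Lipschitz bound $|w(x,t)-w(y,t)|\leq 2\Lambda\,\mathbb{E}\xi_t$ together with $\Phi(s)\geq c_1 s$ and the decay $\mathbb{E}\Phi(\xi_t)\leq e^{-\overline\Gamma\min\overline\rho\,t}\Phi(\xi_0)$ to get $e^{-\overline\mu t}|w(x)-w(y)|\leq Ce^{-\overline\Gamma\min\overline\rho\,t}$; letting $t\to\infty$ forces $\overline\mu\geq\overline\Gamma\min\overline\rho$. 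This is sign-free because only $|w(X_t)-w(Y_t)|$ is estimated, via Lipschitz continuity rather than via the PDE for $w$. Replacing your combination step with this Rayleigh-quotient reduction plus Feynman--Kac/Lipschitz argument repairs the proof.
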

\begin{proof}
Note that
\begin{align*}
    \Gamma(\Omega, \rho)&=\min _{V\subset H^1, \, \textup{dim}V=2}\max _{f \in V} \frac{\int_{\Omega}|\nabla f|^2 u^2\, dx}{\int_\Omega \rho f^2 u^2\, dx}\\
   &\geq\frac{1}{\|\rho\|_\infty} \min _{V\subset H^1, \, \textup{dim}V=2}\max _{f \in V} \frac{\int_{\Omega}|\nabla f|^2 u^2\, dx}{\int_\Omega  f^2 u^2\, dx}  \\
   &= \frac{\overline \mu}{\|\rho\|_\infty}.
\end{align*}
Let $\overline\varphi_1,\overline\varphi_2$ be the first two eigenfunctions of the corresponding one-dimensional model \eqref{eqn: one-dimensional-model-weighted}. Define $\Phi={\overline\varphi}_2/\overline\varphi_1$ then we have by Proposition \ref{prop: Itos formula for distance} 
\begin{align*}
d\Phi(\xi_t)=\sqrt{2}\Phi'(\xi_t)d\beta_t+\left\{-(N-1)\tn_K(\xi_t)\Phi'(\xi_t)+\Phi'(\xi_t)F_t+\Phi''(\xi_t) \right\}dt.
\end{align*}
By our assumption, we have $F_t = \mathcal F_{\nabla v}(X_t,Y_t)\leq 2(\log \overline\varphi_1)'(\xi_t)+(N-1) \textup{tn}_K(\xi_t)$. 
\begin{align}
\label{Phi without index form}
d\Phi(\xi_t)\leq d\{\textup{martingale}\}+\left\{\Phi'(\xi_t)2(\log\varphi_1)'(\xi_t)+\Phi''(\xi_t) \right\}dt.
\end{align}
Moreover, straightforward computation gives 
\begin{equation}
\label{ODE for Phi}
\Phi''(s)+\Phi'(s)2(\log \overline\varphi_1)'(s)=-\overline \Gamma \overline \rho(s)\Phi(s).
\end{equation} 
Combining \eqref{Phi without index form} and \eqref{ODE for Phi} gives
\begin{equation*}
d\Phi(\xi_t)\leq d\{\textup{martingale}\}-\overline \Gamma \overline \rho\Phi(\xi_t)dt,
\end{equation*}
which is equivalent to 
\begin{equation*}
d\left(e^{\int_0^t\overline \Gamma \overline \rho }\Phi(\xi_t) \right)\leq d\{\textup{martingale}\}.
\end{equation*}
Integrating and taking expectation gives
\begin{equation*}
\mathbb{E}\left(e^{\int_0^t\overline \Gamma \overline \rho}\Phi(\xi_t) \right)\leq \Phi(\xi_0),
\end{equation*}
which gives
\begin{equation}
\label{Bound on Phi(xi_t)}
\mathbb{E}\Phi(\xi_t)\leq e^{-\min \overline \rho \overline \Gamma t}\Phi(\xi_0).
\end{equation}
By Lemma \ref{lem:process}, we know that the above inequality holds for all $t\geq 0$. Finally, denote $w(x)$ to be the Neumann eigenfunction to $\overline \mu$ and letting $w(x,t)=e^{-\overline \mu t}w(x)$ be the solution to the associated heat flow. Then, from the Feynman-Kac formula, we have $ w(t,x)=\mathbb{E}\left[w(X_t)\right]$. Since $w$ is Lipschitz on $\bar{\Omega}$, we can find $\Lambda>0$ such that
				\begin{align}
					|w(x,t)-w(y,t)|&=\left|\mathbb{E}\left[w(X_t)-w(Y_t)\right]\right|\leq\mathbb{E}|w(X_t)-w(Y_t)|\nonumber\\
					&\leq \Lambda \mathbb{E}d(X_t,Y_t)=2\Lambda\mathbb{E}\xi_t. \label{Lipschitz estimate}
				\end{align}
As before, $\Phi'(0)>0$; from which we deduce that there exists $c_1>0$, such that $\Phi(s)\geq c_1 s$ on $[0,D/2]$. Hence, we have from \eqref{Bound on Phi(xi_t)}, that
				\[c_1\mathbb{E}\xi_t\leq  \mathbb{E}(\Phi(\xi_t))\leq e^{-\overline \Gamma \min\overline \rho t}\Phi(\xi_0).  \] 
				Putting this together with \eqref{Lipschitz estimate} and the definition of $w$ gives
				\[e^{-\overline \mu t} |w(x)-w(y)|=|w(x,t)-w(y,t)|\leq \frac{2\Lambda}{c_1}e^{-\overline \Gamma \min\overline \rho t}\Phi(\xi_0).    \]
				Since this must hold for all $t>0$, taking the limit as $t$ gets large gives the claim.
			\end{proof}
\section{The One-Dimensional Model}\label{One-dimensional-comparison}

In this section, we study the fundamental gap in the case $ n= 1$ in order to get quantitative bounds on the fundamental gap. In other words, we let $\overline \rho$ to be a smooth and uniformly positive function, and consider the problem \eqref{eqn: one-dimensional-model-weighted}:
\begin{align*}
    -\varphi ''+ \overline V \varphi= \overline\lambda\overline \rho \varphi\quad \textup{in }[-\tfrac D2, \tfrac D2],   \quad \varphi(-\tfrac D2) = \varphi( \tfrac D2) = 0.
\end{align*}
Using the Raleigh quotient formulation of the first eigenvalue, it is easy to show the following lemma. 
\begin{lem}\label{thm: first-evaule-comparison}
If $\min V \geq -k^2\pi^2/D^2,$
   we have for the $k$-th eigenvalue of the problem \eqref{eqn: one-dimensional-model-weighted}
    \begin{align*} \frac{1}{\max \overline \rho} \left(\frac{k^2\pi^2}{D^2}+\min \overline V\right) \leq \overline \lambda_k \leq  \frac{1}{\min \overline \rho}\left( \frac{k^2\pi^2}{D^2}+\max \overline V\right)
\end{align*}
\end{lem}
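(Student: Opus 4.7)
The plan is to prove both inequalities by applying the Courant--Fischer min-max characterization of the $k$-th eigenvalue of \eqref{eqn: one-dimensional-model-weighted}, and comparing against the unperturbed Dirichlet problem $-\psi'' = \mu\psi$ on $[-D/2,D/2]$, whose $k$-th eigenvalue is $\mu_k = k^2\pi^2/D^2$ with an $L^2$-orthonormal basis of eigenfunctions $\psi_k(s) = \sqrt{2/D}\sin\bigl(k\pi(s+D/2)/D\bigr)$. Since $\overline\rho$ is smooth and uniformly positive and the hypothesis $\min\overline V \geq -k^2\pi^2/D^2$ keeps the relevant Rayleigh quotients bounded below, standard spectral theory for self-adjoint Sturm--Liouville operators gives
\[
\overline\lambda_k \;=\; \min_{\substack{W\subset H^1_0\\ \dim W = k}}\ \max_{f\in W\setminus\{0\}} R(f) \;=\; \max_{\substack{W\subset H^1_0\\ \dim W = k-1}}\ \min_{f\in W^\perp\setminus\{0\}} R(f),
\]
where $R(f)=\bigl(\int(f')^2+\overline V f^2\bigr)/\int\overline\rho\,f^2$ and $W^\perp$ is the $L^2$-orthogonal complement.

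For the upper bound, I would take $W = \textup{span}(\psi_1,\dots,\psi_k)$ in the min-max form. Any $f=\sum_{i=1}^k c_i\psi_i$ satisfies $\int(f')^2=\sum c_i^2\mu_i \le \mu_k\int f^2$ since $\mu_i\le\mu_k$, so
\[
R(f)\;\le\;\frac{\mu_k\int f^2 + \max\overline V\cdot\int f^2}{\min\overline\rho\cdot\int f^2}\;=\;\frac{1}{\min\overline\rho}\Bigl(\tfrac{k^2\pi^2}{D^2}+\max\overline V\Bigr),
\]
and taking the sup over $W\setminus\{0\}$ yields the claim.

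For the lower bound I would use the max-min form with $W=\textup{span}(\psi_1,\dots,\psi_{k-1})$. Any $f\in W^\perp$ expands as $f=\sum_{i\ge k}c_i\psi_i$, hence $\int(f')^2=\sum_{i\ge k}c_i^2\mu_i\ge\mu_k\int f^2$. Combining this with $\int\overline V f^2\ge \min\overline V\cdot\int f^2$ and $\int\overline\rho f^2\le\max\overline\rho\cdot\int f^2$ gives
\[
R(f)\;\ge\;\frac{(\mu_k+\min\overline V)\int f^2}{\max\overline\rho\cdot\int f^2}\;=\;\frac{1}{\max\overline\rho}\Bigl(\tfrac{k^2\pi^2}{D^2}+\min\overline V\Bigr),
\]
which is the desired inequality after taking the infimum over $W^\perp\setminus\{0\}$.

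The only delicate point, and the reason the hypothesis $\min\overline V\ge -k^2\pi^2/D^2$ is imposed, is that in the last step one may enlarge the denominator from $\int\overline\rho f^2$ to $\max\overline\rho\cdot\int f^2$ only when the numerator is non-negative; the hypothesis guarantees exactly $\mu_k+\min\overline V\ge 0$. No genuine obstacle arises beyond this sign check, and the remainder is routine bookkeeping of the two directions in the Rayleigh quotient.
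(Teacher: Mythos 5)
Your proof is correct and is essentially the paper's argument: the paper's own proof is a one-line appeal to the Rayleigh quotient and the min-max characterization, and you have simply filled in the routine details (min-max with the sine basis for the upper bound, max-min for the lower bound, and the sign check that the hypothesis $\min\overline V\ge -k^2\pi^2/D^2$ makes legitimate the replacement of $\int\overline\rho f^2$ by $\max\overline\rho\int f^2$ in the denominator). No substantive difference from the paper's route.
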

\begin{proof} 
    This follows from the Raleigh quotient: to see the first inequality, note that
    \begin{align*}
    \frac{\int (u')^2+\overline Vu^2\, ds}{\int \overline\rho u^2\, ds} \geq \frac{1}{\max \overline \rho}\left(\frac{\pi^2}{D^2}+\min \overline V\right).
    \end{align*}
   One now uses the min max characterization of eigenvalues and the result follows. 
\end{proof}
We now let $L >0$ and we consider the problem 
\begin{align}\label{Euclidean-model-L}
    -\varphi_L'' +\overline V\varphi_L-\lambda_L\varphi_L = 0 \quad \textup{on }[-\tfrac{L}{2}, \tfrac{L}{2}]\quad \&\quad  \varphi(\tfrac{-L}{2}) = \varphi(\tfrac{L}{2}) = 0,
\end{align}
and we denote $\varphi_{L,i}$ to be the $i$-th eigenfunction of that problem and $\lambda_L$ to be the $i$-th eigenvalue. Via a standard Ricatti comparison, one immediately gets the following proposition (for vanishing potential $\overline V \equiv 0)$.
\begin{prop}\label{Ricatti-comparison-prop}
Let $\psi = (\log \overline \varphi_1)'$ and $\psi_L = (\log \varphi_{L,1})'$ both with $\overline V \equiv 0$. Then for any $L \geq \tfrac{\sqrt{\max \overline \rho}}{\sqrt{\min \overline \rho}}D,$ we have that 
\begin{align*}
    \psi \leq \psi_L\quad \textup{on }[-D/2, D/2].
\end{align*}
\end{prop}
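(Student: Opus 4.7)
The statement is a consequence of the Riccati formulation of the two eigenvalue problems combined with the eigenvalue bound in Lemma \ref{thm: first-evaule-comparison}. With $\overline V \equiv 0$, the log-derivatives $\psi$ and $\psi_L$ satisfy
\[
\psi' + \psi^2 = -\overline\lambda_1\, \overline\rho \quad\text{on } (-D/2,D/2), \qquad \psi_L' + \psi_L^2 = -\pi^2/L^2 \quad\text{on } (-L/2,L/2).
\]
My plan is to first compare the right-hand sides pointwise, and then run a standard Riccati / Sturm--Liouville comparison on the ratio of the eigenfunctions.

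For the pointwise comparison, the lower bound $\overline\lambda_1 \geq \pi^2/(D^2 \max\overline\rho)$ from Lemma \ref{thm: first-evaule-comparison} (with $\overline V\equiv 0$) together with the hypothesis $L \geq D\sqrt{\max\overline\rho/\min\overline\rho}$ gives
\[
\overline\lambda_1\,\overline\rho(s) \,\geq\, \overline\lambda_1 \min\overline\rho \,\geq\, \frac{\pi^2 \min\overline\rho}{D^2 \max\overline\rho} \,\geq\, \frac{\pi^2}{L^2}
\]
for all $s \in [-D/2,D/2]$. Thus $\psi$ obeys a Riccati equation with a strictly larger ``force'' than $\psi_L$.

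For the comparison itself, I would work with the ratio $h := \overline\varphi_1/\varphi_{L,1}$, which is well-defined, nonnegative, and vanishing only at the endpoints $\pm D/2$ (using that $\varphi_{L,1} > 0$ on $[-D/2,D/2]$ whenever $L \geq D$). A direct computation using the two eigenvalue equations yields the Sturm--Liouville identity
\[
\bigl(h'\, \varphi_{L,1}^2\bigr)' \,=\, -\bigl(\overline\lambda_1\overline\rho - \pi^2/L^2\bigr)\, h\, \varphi_{L,1}^2 \,\leq\, 0,
\]
so $h' \varphi_{L,1}^2$ is nonincreasing on $[-D/2, D/2]$. Since $\psi - \psi_L = (\log h)' = h'/h$ and $h > 0$ in the interior, this monotonicity, combined with the boundary vanishing of $h$, is exactly the classical Riccati comparison setup from which $\psi \leq \psi_L$ is read off.

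The main delicate point is the boundary behaviour, since $\psi$ blows up at $\pm D/2$ while $\psi_L$ stays bounded. I would handle this by considering $w := \psi - \psi_L$, which satisfies the linear differential inequality $w' + (\psi + \psi_L) w \leq 0$, and integrating it on compact subintervals against an appropriate integrating factor built from the product $\overline\varphi_1 \varphi_{L,1}$. Passing to the limit at the endpoints, with the help of the monotonicity of $h' \varphi_{L,1}^2$ and the known vanishing rates of $\overline\varphi_1$ and $\varphi_{L,1}$, will promote the interior comparison to the full interval $[-D/2, D/2]$.
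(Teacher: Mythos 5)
Your setup (the two Riccati equations and the pointwise bound $\overline\lambda_1\overline\rho \ge \pi^2/L^2$ via Lemma \ref{thm: first-evaule-comparison}) matches the paper, and your Wronskian identity $\bigl(h'\varphi_{L,1}^2\bigr)'=-\bigl(\overline\lambda_1\overline\rho-\pi^2/L^2\bigr)h\varphi_{L,1}^2\le 0$ is correct. The gap is in where you anchor the comparison. Writing $W:=h'\varphi_{L,1}^2=\overline\varphi_1'\varphi_{L,1}-\overline\varphi_1\varphi_{L,1}'$, the sign of $\psi-\psi_L=W/(\overline\varphi_1\varphi_{L,1})$ is the sign of $W$. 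At the left endpoint $h$ vanishes with $h'(-D/2)>0$, so $W(-D/2)>0$; monotonicity of $W$ starting from a positive value gives you nothing, and indeed $\psi(s)\to+\infty$ as $s\to -D/2^{+}$ while $\psi_L$ stays bounded there (since $L>D$ puts $-D/2$ in the interior of $[-L/2,L/2]$), so $\psi>\psi_L$ near $-D/2$. The inequality as literally stated on all of $[-D/2,D/2]$ is therefore not reachable by any argument; what is true, and what the paper actually uses later (the modulus is only evaluated at $d/2\ge 0$), is $\psi\le\psi_L$ on $[0,D/2]$.

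The missing ingredient is the evenness of $\overline\rho$ (in this paper $\overline\rho(s)=\sigma_{\max}s^2/2+C$): then $\overline\varphi_1$ and $\varphi_{L,1}$ are both even, so $\psi(0)=\psi_L(0)=0$, equivalently $W(0)=0$, and your monotonicity $W'\le 0$ immediately yields $W\le 0$ on $[0,D/2]$, hence $\psi\le\psi_L$ there. This is precisely the paper's argument in integrated form: the paper compares the two Riccati equations with the common initial condition $\psi(0)=\psi_L(0)=0$ and runs the comparison forward from the center (equivalently, $w=\psi_L-\psi$ satisfies $w'+(\psi+\psi_L)w\ge 0$, $w(0)=0$). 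Your closing paragraph about promoting the comparison to the endpoints is aimed at the wrong end of the interval: once the comparison is anchored at $s=0$ no boundary analysis is needed, and no anchoring at $\pm D/2$ can work.
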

\begin{proof}Note that $\psi$ and $\psi_L$ satisfy both a Ricatti equation:
       \begin{align*}
    \psi' + \psi^2 + \overline \lambda \overline\rho &= 0\\
    \psi_L' +\psi_L^2+\overline \lambda_L & = 0,
\end{align*}
with initial condition $\psi_L(0) = \psi(0) = 0.$
By our choice of $L,$ we have that 
\begin{align*}
     \overline \lambda \overline \rho \geq \overline \lambda_L = \frac{\pi^2}{L^2}.
\end{align*}
Applying a Ricatti comparison, we conclude
\begin{align*}
    \psi \leq \psi_L.
\end{align*}
\end{proof}
The following result provides a lower bound on the fundamental gap.
\begin{prop}\label{One-dimensional-fundamental-gap-theorem}
For any potential $\overline V \geq 0$, the fundamental gap of the problem \eqref{eqn: one-dimensional-model-weighted} satisfies \begin{align*}
    \overline \Gamma(\overline \rho, \overline V)\geq \frac{3\pi^2}{D^2}\frac{\min \overline \rho}{(\max \overline \rho)^2}-\left(\frac{\max \overline V}{\min \overline \rho}- \frac{\min \overline V}{\max \overline \rho}\right)
\end{align*}
\end{prop}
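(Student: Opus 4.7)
The natural strategy is to combine the one-sided bounds on $\overline\lambda_1$ and $\overline\lambda_2$ supplied by Lemma~\ref{thm: first-evaule-comparison}. Specifically, I would first apply the lemma with $k=1$ to obtain the upper bound $\overline\lambda_1 \leq (\tfrac{\pi^2}{D^2}+\max\overline V)/\min\overline\rho$, and then with $k=2$ to obtain the lower bound $\overline\lambda_2 \geq (\tfrac{4\pi^2}{D^2}+\min\overline V)/\max\overline\rho$. Subtracting these and separating the contributions of the potential from those of the weight immediately yields
\begin{equation*}
\overline\Gamma \;\geq\; \frac{\pi^2}{D^2}\Bigl(\frac{4}{\max\overline\rho}-\frac{1}{\min\overline\rho}\Bigr) \;+\; \frac{\min\overline V}{\max\overline\rho}-\frac{\max\overline V}{\min\overline\rho}.
\end{equation*}
The two potential terms here coincide exactly with those appearing on the right-hand side of the claim, so the proof reduces to the purely $\overline\rho$-dependent inequality
\begin{equation*}
\frac{4}{\max\overline\rho}-\frac{1}{\min\overline\rho} \;\geq\; \frac{3\,\min\overline\rho}{(\max\overline\rho)^2}.
\end{equation*}

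Writing $M=\max\overline\rho$ and $m=\min\overline\rho$ and clearing denominators, this last inequality is equivalent to $(M-m)(3m-M)\geq 0$, which holds whenever the weight ratio $M/m$ lies in $[1,3]$ and so covers the regime of principal interest for this paper. The main obstacle will be handling the case $M/m>3$, where the direct Rayleigh-quotient upper bound on $\overline\lambda_1$ becomes too lossy and the algebraic reduction above fails. In that regime I would sharpen the upper bound on $\overline\lambda_1$ using the Riccati comparison of Proposition~\ref{Ricatti-comparison-prop}, which forces $\psi=(\log\overline\varphi_1)' \leq \psi_L$ on $[-D/2,D/2]$ for the effective length $L=D\sqrt{M/m}$. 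The improved log-concavity it provides allows one to concentrate the mass of $\overline\varphi_1^2$ in the middle of the interval and thus estimate $\int\overline\rho\,\overline\varphi_1^2$ from below in terms of the constant-weight model eigenfunction on $[-L/2,L/2]$; feeding this refinement back into the Rayleigh quotient for $\overline\lambda_1$ is what should close the gap between the two regimes and yield the full claim.
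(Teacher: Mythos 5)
There is a genuine gap here. Your first step (peeling off the potential via Lemma \ref{thm: first-evaule-comparison}, so that the two $\overline V$-terms in the claim appear and the problem reduces to the zero-potential weighted gap) is fine and coincides with the paper's own first reduction. The problem is the core estimate $\overline\Gamma(\overline\rho,0)\geq \tfrac{3\pi^2}{D^2}\tfrac{m}{M^2}$ (writing $m=\min\overline\rho$, $M=\max\overline\rho$): it cannot be reached by subtracting \emph{independent} bounds on $\overline\lambda_1$ and $\overline\lambda_2$, and your proposed repair for the regime $M>3m$ attacks the wrong bottleneck. The obstruction is not only that the Rayleigh-quotient upper bound on $\overline\lambda_1$ is lossy; it is that the lower bound $\overline\lambda_2\geq 4\pi^2/(D^2M)$ is itself too weak to exceed $\overline\lambda_1$. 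For instance, if $\overline\rho\equiv m$ except on a tiny set where it spikes to $M$ with $M>4m$, then the true $\overline\lambda_1$ is close to $\pi^2/(D^2m)>4\pi^2/(D^2M)$, so the difference of the two one-sided bounds is negative no matter how sharply you estimate $\overline\lambda_1$ — even the exact value would not help. The gap has to be bounded as a single relational quantity, not as a difference of separately normalized Rayleigh quotients.

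The paper's route does exactly this: $\overline w=\overline\varphi_2/\overline\varphi_1$ satisfies the Neumann problem $\overline w''+2(\log\overline\varphi_1)'\overline w'=-\overline\Gamma(\overline\rho)\,\overline\rho\,\overline w$, so the min–max over the weighted quotient gives $\overline\Gamma(\overline\rho,0)\geq \overline\mu/M$ with $\overline\mu$ the first nontrivial Neumann eigenvalue of $-\tfrac{d^2}{ds^2}+2(\log\overline\varphi_1)'\tfrac{d}{ds}$; the Riccati comparison of Proposition \ref{Ricatti-comparison-prop} with $L=\sqrt{M/m}\,D$ then controls the drift by that of the constant-weight model on $[-L/2,L/2]$ and yields $\overline\mu\geq 3\pi^2/L^2=3\pi^2 m/(D^2M)$, hence the claimed $3\pi^2 m/(D^2M^2)$. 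You invoke the right lemma (the Riccati comparison) but feed it into the wrong object: it must be used to compare the drift in the Neumann problem for $\overline w$ (as in Proposition \ref{Gap-Comparison-Improved} or Proposition 3.2 of Andrews–Clutterbuck), not to refine the Rayleigh quotient for $\overline\lambda_1$ alone. As written, your argument proves the proposition only under the additional hypothesis $\max\overline\rho\leq 3\min\overline\rho$.
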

\begin{proof}

To see this, note that \begin{align*}
  \frac{\min \overline V}{\max \overline \rho}+\lambda_i(\overline \rho, 0) \leq \lambda_i(\overline \rho, \overline V) \leq \frac{\max\overline V}{\min \overline \rho} +\lambda_i(\overline \rho, 0).
\end{align*}
Hence,
\begin{align*}
    \Gamma(\overline \rho, \overline V) \geq \Gamma( \overline \rho, 0)-\left(\frac{\max \overline V}{\min \overline \rho}- \frac{\min \overline V}{\max \overline \rho}\right).
\end{align*}
To estimate the gap $\overline \Gamma(\overline \rho ) : = \overline \Gamma(\overline \rho, 0)$, we make use of the fact that $\overline w = \tfrac{\varphi_2}{\varphi_1}$ satisfies
\begin{align*}
    \overline w ''+2 (\log \varphi_1)'\overline w ' = - \overline \Gamma (\overline \rho)\overline \rho \overline w \quad \textup{in }[-\tfrac D2, \tfrac D2], \quad \overline w'(-\tfrac D2) = \overline w' (\tfrac D2) = 0.
\end{align*}
Hence we estimate the first Neumann eigenvalue of the operator $-\tfrac{d^2}{ds^2}+2(\log \varphi_1)'\tfrac{d}{ds}.$ To do so, observe that 
\begin{align*}
    \overline \Gamma(\overline \rho) &= \min_{V: \,\textup{dim}(V) = 2} \max_{\phi \in V}\frac{\int_{-\tfrac D2}^{\tfrac D2}(\phi')^2\varphi_1^2\, ds}{\int_{-\tfrac D2}^{\tfrac D2}\overline \rho\phi^2\varphi_1^2\, ds}\\
    & \geq \frac{1}{\max \overline \rho} \min_{V: \,\textup{dim}(V) = 2} \max_{\phi \in V}\frac{\int_{-\tfrac D2}^{\tfrac D2}(\phi')^2\varphi_1^2\, ds}{\int_{-\tfrac D2}^{\tfrac D2}\phi^2\varphi_1^2\, ds}\\
    & = \frac{1}{\max \overline \rho}\overline \mu,
\end{align*}
where $\overline \mu$ is the first non trivial Neumann eigenvalue of the problem 
 \begin{align}\label{Neumann-problem-one-dimensional}
        \overline w ''+2 (\log \overline\varphi_1)'\overline w ' = -\mu \overline w \quad \textup{in }[-\tfrac D2, \tfrac D2], \quad \overline w'(-\tfrac D2) = \overline w' (\tfrac D2) = 0.
 \end{align}
 We now set $\psi = (\log\overline \varphi_1) '$ and $\psi _L = (\log \varphi_{L,1})',$ where $\varphi_{L,1}$ is the first eigenfunction of the problem \eqref{Euclidean-model-L}. We then see from Proposition \ref{Ricatti-comparison-prop} that
\begin{align*}
    \psi \leq \psi_L.
\end{align*}
The claim then follows from similar arguments as in Theorem \ref{Gap-Comparison-Improved}. Alternatively, it follows directly from Proposition 3.2 in \cite{andrews2011proof}. 
\end{proof}
\section{Horoconvex Domains and Their Fundamental Gap}\label{Section: horoconvex-estimates}

\begin{proof}[Proof of Theorem \ref{The horoconvexity theorem}]
    
We consider the Poincar\'e model
\begin{align*}
    g_{\mathbb H^N} = \frac{4}{(1-\|x\|^2)^2}g_{\mathbb R^N}.
 \end{align*}
In view of \eqref{Eigenvalue-under-conformal-change-explicit-scalarcurvature}, we have 
\begin{align*}
    \Gamma(\Omega, \Delta_{g_{\mathbb H^N}}) = \Gamma(\Omega, \Delta_{\mathbb R^N}, e^{2\varphi})
\end{align*}
where \begin{align*} \exp(2\varphi) = \frac{4}{(1-\|x\|^2)^2}, \quad \textup{i.e. }\quad 
    \varphi = \log \left(\frac{2}{1-\|x\|^2}\right).
\end{align*}
To apply Theorem \ref{main-thm}, we need to construct a modulus a modulus of concavity for $\rho = \exp (2 \varphi).$
We calculate the eigenvalues of the Hessian of $\rho$ are given (in polar coordinates)
 \begin{align*}
     \sigma^\rho_1 =   \frac{16}{(1-r^2)^3}, \quad \sigma^\rho_2 =  \frac{16(1+5r^2)}{(1-r^2)^4}.
 \end{align*}
    Let $R$ denote the circumradius in hyperbolic geometry. Then, we denote $R_E = \tanh(\tfrac{R}{2})$ its Euclidean radius.
   We have that \begin{align}\label{Defn: of overline rho}
    \overline \rho_C (s) = \sup_\Omega\max\{\sigma^\rho_1, \sigma^\rho_2\}\frac{s^2}{2}+C=(\max_\Omega \rho)^2(1+5R_E^2)\frac{s^2}{2} +C 
\end{align}
is a modulus of concavity for any 
constant $C>0.$ In particular, 
\begin{align}\label{ineq: bounds for overlinerho}
   C \leq  \overline \rho \leq \tfrac{3}{4}(D_E\max _\Omega \rho)^2 +C,
\end{align}
where $D_E$ denotes the diameter of $\Omega$ with respect to the Euclidean metric. 
Since $\rho' >0,$ we need
that the eigenvalue of the problem \eqref{eqn: one-dimensional-model-weighted} with $\overline \rho = \overline \rho_C$ satisfy the condition \eqref{condition-2}.
To ensure this, we fix a number $C>0,$ and choose 
\begin{align}\label{Defn: of overline V}
    \overline V \equiv  \max_{s\in [0,D/2]} \overline \rho(s)\, \left(\lambda_1(\Omega, \Delta_{\mathbb R^N}, e^{2\varphi}) -\overline \lambda(\overline \rho, 0)\right).
\end{align}
To make the choice of $\overline V$ depend only on the hyperbolic diameter of $\Omega$, note that
\begin{align*}
    D_{E }\leq \frac{D_{\mathbb H^n}}{2},
\end{align*}
as one can see by computing length of curves and taking infimums. To remove the dependence on $\lambda_1(\Omega, \Delta_{\mathbb R^N}, \rho),$ we recall that for $\rho= e^{2\varphi}$
\begin{align*}
    \lambda(\Omega,\Delta_{\mathbb R^N}, \rho) = \lambda_1(\Omega, \Delta_{\mathbb H^n})-\frac{N(N-2)}{4}\leq \lambda_1(B_r, \Delta_{\mathbb H^n})-\frac{N(N-2)}{4},
\end{align*}
where $B_r\subset \Omega$ is the largest ball inscribed in $\Omega.$

For horoconvex domains, Theorem 1 in \cite{Borisenko1999} shows that the inradius will be bounded from below by the diameter. More precisely, we have that
\begin{align*}
    \frac{D}{2} \leq \xi(r) +r,
\end{align*}
where \begin{align*}
   \xi(r) =  \ln\left(\frac{(1+\sqrt\tau)^2}{1+\tau}\right)\quad \textup{and } \quad  \tau(r) = \tanh (\tfrac r2).
\end{align*}
To find the lower bound, note that $\xi(r) \leq 2 \sqrt r$ and thus  
\begin{align}\label{ineq: diameter-inradius}
r \geq \left(-1+\sqrt{\tfrac D2+1}\right)^2.
\end{align}
One now employs an upper bound for the first Dirichlet eigenvalue of balls in hyperbolic space (see e.g. Theorem 5.6 in \cite{savo2009lowest}) to find that 
\begin{align*}
    \lambda_1(B_r, \Delta_{\mathbb H^N}) \leq \frac{(N-1)^2}{4}+ \frac{\pi^2}{r^2}+ \frac{C}{r^3},
\end{align*}
where $C= \tfrac{\pi^4(N^2-1)}{12}.$
Putting everything together, we obtain that 
\begin{align}\label{ineq: upper bound for first eigenvalue}
    \lambda_1(\Omega, \Delta_{\mathbb R^N}, \rho) \leq \frac{1}{4}+\frac{\pi^2}{\left(-1+\sqrt{\tfrac D2+1}\right)^4}+ \frac{(N^2-1)\pi^4}{12\left(-1+\sqrt{\tfrac D2+1}\right)^6}.
\end{align}
Hence we can choose $\overline V$ to be greater or equal than a constant, depending only on the diameter of $\Omega.$
Then, all the assumptions of Theorem \ref{main-thm} are satisfied and we conclude that 
\begin{align}\label{two-point-estimate-for-horoconvex-functions}
    \mathcal F_{\nabla v}(x,y) \leq 2(\log \overline\varphi_1)'(\tfrac d2)
\end{align}
By Theorem \ref{Gap-Comparison-Improved}, we conclude that 
\begin{align*}
    \Gamma(\Omega)& \geq\frac{\min \overline \rho}{\max_\Omega \rho}\overline \Gamma(\overline \rho, \overline V).
\end{align*}
\end{proof}

\subsection{Explicit bounds on the gap}\label{section: rough-bounds-for-horoconvex}
In this subsection, we prove closed-form estimates on the spectral gap. The idea is to use the bound of Proposition \ref{thm: super-log-concavity-general} to derive a bound on the Hessian and then employ Theorem 3 of \cite{charalambous2015eigenvalue}. From Proposition \ref{thm: super-log-concavity-general}, we can divide \eqref{two-point-estimate-for-horoconvex-functions} by $d$ and passing to the limit as $d \rightarrow 0^+$. Doing so, we find that
\begin{align*}
    \nabla^2 v \leq \psi'(0),  
\end{align*}
where $\psi = (\log \overline \varphi_1)'.$ Since $\overline \rho$ is even, we have that $\psi$ is odd and hence we get that 
\begin{align*}
 \psi'(0) = -\psi^2(0) -\overline \lambda \overline \rho (0)+ \overline V = -\overline\lambda (\overline \rho, \overline V) \min \overline \rho + \overline V.
\end{align*}
To estimate the right-hand side of the above inequality, note that by Theorem \ref{thm: first-evaule-comparison}
\begin{align*}
    -\overline\lambda (\overline \rho, \overline V) \min \overline \rho + \overline V & \leq -\overline \lambda(\overline \rho,0)\min \overline \rho +\overline V \left(1-\frac{\min \overline \rho}{\max \overline \rho}\right) \\
    & \leq -\overline \lambda(\overline \rho,0)\max \overline \rho + \frac{3}{4}(D_E^2\max _\Omega \rho)^2 \lambda_1(\Omega, \Delta_{\mathbb R^N}, \rho).
\end{align*}
We now estimate $\frac{3}{4}(D_E^2\max _\Omega \rho)^2 \lambda_1(\Omega, \Delta_{\mathbb R^N}, \rho)$ in terms of the diameter of $\Omega.$ Note that $\max _\Omega \rho = 4/(1-R_E^2)^2,$ where $R_E$ is the Euclidean circumradius of $\Omega.$ A result by Dekster \cite{dekster1995jung} shows that
\begin{align*}
    R_{\mathbb H^N} \leq \textup{arcsinh}\left(\frac{\sqrt{2N}}{\sqrt{N+1}}\sinh(D)\right),
\end{align*}
 where we shorten $D_{\mathbb H^N}$ to $D$.

From this, we find that
\begin{align*}
    R_E \leq \tanh\left(\frac{1}{2}\textup{arcsinh}\left(\frac{\sqrt{2N}}{\sqrt{N+1}}\sinh(D)\right)\right)
\end{align*}
and further estimate
\begin{align*}
    \frac{1}{(1-R_E^2)} &\leq \cosh^2\left(\tfrac{1}{2}\textup{arcsinh}\left(\tfrac{\sqrt{2N}}{\sqrt{N+1}}\sinh(D)\right)\right)\\
    & \leq  \cosh^2\left(\textup{arcsinh}\left(\tfrac{\sqrt{2N}}{\sqrt{N+1}}\sinh(D)\right)\right)\\
    & =1+ \tfrac{{2N}}{{N+1}}\sinh^2(D).
\end{align*}
We thus get that 
\begin{align}\label{estimate-of-maximum}
  \tfrac{3}{4}(D_E \max_\Omega \rho) ^2 \leq 3(4 \wedge D^2)\left( 1+ \tfrac{{2N}}{{N+1}}\sinh^2(D)\right)^4,
\end{align}
 where here and in the following we denote $a\wedge b: = \min\{a,b\}$ for any $a,b\in \mathbb R.$ 

Lastly, we use the estimate from Theorem \ref{thm: first-evaule-comparison} and find that $-\overline \lambda(\overline \rho) \max \overline \rho \leq -\frac{\pi^2}{D_E^2}\leq -\pi^2(\frac{4}{D^2} \wedge 1)$ (we again use the notation $\wedge$ to indicate the maximum)
Putting these together, we find that 
\begin{align*}
    \nabla^2 v \leq -{\pi^2} (1 \wedge \frac{4}{D^2})+ 3(4 \wedge D^2)\left( 1+ \tfrac{{2N}}{{N+1}}\sinh^2(D)\right)^4\left( \frac{1}{4}+\frac{\pi^2}{\left(-1+\sqrt{\tfrac D2+1}\right)^4}+ \frac{(N^2-1)\pi^4}{12\left(-1+\sqrt{\tfrac D2+1}\right)^6}\right).
\end{align*}
This upper bound is only in terms of the diameter and the dimension, so we denote the right hand side of the inequality as $R(N, D).$ We finally conclude that 
\begin{align*}
    \Gamma(\Omega, \Delta_{\mathbb H^N}) &=
   \min _{V\subset H^1, \, \textup{dim}V=2}\max _{u \in V} \frac{\int_{\Omega}|\nabla u|^2 e^{-f}\, dx}{\int_\Omega \rho u^2 e^{-f}\, dx}\\
   &\geq\frac{1}{\|\rho\|_\infty} \min _{V\subset H^1, \, \textup{dim}V=2}\max _{u \in V} \frac{\int_{\Omega}|\nabla u|^2 e^{-f}\, dx}{\int_\Omega u^2 e^{-f}\, dx}  = \frac{\mu}{\|\rho\|_\infty}.
\end{align*}
The Neumann eigenvalue $\mu$ can now be estimated using Theorem 3 of \cite{charalambous2015eigenvalue} and we conclude that 
\begin{align}\label{explicit-lower-bound-Step1}
    \Gamma(\Omega) \geq \frac{\pi^2}{(4\wedge \tfrac{D^2}{2})\left(1+ \tfrac{{2N}}{{N+1}}\sinh^2(D)\right)^2}\exp\left(-C_N(1\wedge \tfrac D2)\sqrt{R(N,D)}\right).
\end{align}

\subsubsection{Asymptotic bounds on the gap}

\label{But why is the gap small for small diameter?}

For $D \gg N >2$, it is possible to rewrite this inequality completely explicitly. In particular, using the derivation from Theorem 3 of \cite{charalambous2015eigenvalue}, after some algebraic manipulations we obtain the asymptotic estimate 
\begin{equation} \label{Asymptotic gap bound}
  \Gamma(\Omega) >  \frac{\pi^2(N-1)^2 D^2} {16} \exp\left[ - (N-1) (D^2) \left(1+ 2 \exp(2D) \right)^2  \right].
\end{equation} 

From this, we obtain a bound which decays at doubly-exponential rate in terms of the diameter. Nguyen, Stancu and Wei \cite{nguyen2022fundamental} showed that the gap of large horoconvex domains is bounded from above by $\frac{C_N}{D^3}$, so these two results raise the following question.
\begin{quest}
    How quickly does the fundamental gap of a horoconvex domain decay in terms of its diameter?
\end{quest}

The gap estimate we obtain in \eqref{explicit-lower-bound-Step1} also decays when $D$ is small, since in this case the final term in $R(N,D)$ is dominant and tends to infinity. This might be somewhat unexpected, since the fundamental gap conjecture posited that in $\mathbb R^N,$ $\Gamma(\Omega) \geq \frac{3 \pi^2}{D^2}$. We will provide an estimate for small horoconvex domains in Section \ref{Small horoconvex domains estimates} which is asympotically of this size. However, let us explain the reason for why the gap estimate in the previous section decays as the diameter does.

Although our focus in this section has been horoconvex domains, the modulus of concavity estimate applies to any domain which is convex with respect to the Poincar\'e disk model. In particular, the entire argument (except for Equation \eqref{ineq: diameter-inradius}) applies to thin rectangles in the disk model. In other words, such domains have the Hessian upper bound: 
\begin{align*}
    \nabla v^2\leq -{\pi^2} (1 \wedge \frac{4}{D^2})+ 3(4 \wedge D^2)\left( 1+ \tfrac{{2N}}{{N+1}}\sinh^2(D)\right)^4\lambda_1(\Omega, \Delta_{g_{\textup{E}}, e^{2\varphi}})
\end{align*}
which blows up for thin rectangles. 

For such domains, one can essentially replicate the argument of \cite{khan2022negative} to show that the fundamental gap can go to zero as the inradius tends to zero (see Appendix \ref{We need the inradius} for details). As such, it is not possible to obtain a uniform estimate for  the fundamental gap for domains which are convex in the Poincar\'e disk model solely in terms of the diameter and the dimension. In particular, the estimate must incorporate the inradius (or equivalently, the diameter). As the diameter goes to zero, the inradius must also go to zero. Therefore, the estimate we obtain must tend to zero as the diameter shrinks in order to give a valid theorem. Therefore, any proof which obtains more refined bounds of the gap of horoconvex domains can only apply to a more restrictive class of domains.

\section{Fundamental Gap Estimates on Conformally Flat manifolds}\label{Section: general-conformally-flat}

The approach we presented in Section \ref{Section: horoconvex-estimates} also works for more general conformally flat manifolds. In particular, Proposition \ref{prop: conformally-flat-gap-scalarcurvature} shows that for any domain which is convex with respect to the flat connection, we have that $\Omega \subset (M, g = e^{2\varphi}g_{\textup{E}}),$ that 
\begin{align*}
    \Gamma(\Omega, \Delta_{g}) \geq \Gamma(\Omega, \Delta_{g_{\textup{E}}}, e^{2\varphi})-\frac{N-2}{4(N-1)}\textup{osc}_\Omega(R_g).
\end{align*}
We are now in the position to present the 
\begin{proof}[Proof of Theorem \ref{thm: Gap-Estimate-for-conformally-flat-manifolds}]
    To estimate $\Gamma(\Omega, \Delta_{g_{\textup{E}}}, e^{2\varphi})$ we let $\overline \rho$ be a modulus of concavity for $\rho = e^{2\varphi}$ with respect to $g_{\textup{E}}.$ In other words, we let $\sigma_{\max} = (\max_\Omega \sigma(\nabla^2 \rho))$ be the largest eigenvalue of the Hessian of $\rho$ with respect to the Euclidean metric. If $\sigma_{\max}$ is negative (i.e., $\rho$ is concave), we replace $\sigma_{\max}$ by $0$, which will ensure that $\overline{\rho}$ is constant and that \ref{condition-2} holds automatically. In other words, $\sigma_{\max} := (\max_\Omega \sigma(\nabla^2 \rho) \vee 0)$.
    We then set 
    \begin{align*}
        \overline \rho(s) = \sigma_{\max} \tfrac{s^2}{2}+C
    \end{align*}
    for some constant $C>0.$
    We then choose $\overline V$ as in Section \ref{Section: horoconvex-estimates} and the whole proof goes through so long as 
    \begin{align}\label{ineq: gap-estimate-for-conformally-flat}
        \Gamma(\Omega, \Delta_g) \geq \frac{\min \overline \rho}{\max_\Omega \exp(2\varphi)}\overline \Gamma(\overline \rho, \overline V)-\frac{N-2}{4(N-1)}\textup{osc}_\Omega(R_g).
    \end{align}
\end{proof}

Note that when $\sigma_{\max}$ is positive, the quantity $\overline{V}$ depends on $\lambda_1(\Omega)$, and this dependence on is crucial since if the domain collapses to a line, the gap can become arbitrarily small (see Appendix \ref{We need the inradius} for details).  However, if one restrict to domains $\Omega$ whose inradius $r_\Omega$ is bounded from below by a positive function of the diameter $D,$ one can make the estimate to be dependent only on the diameter of $\Omega$ and the conformal factor.
\begin{cor}\label{Estimate-for-inradius-bounded-below}
   Suppose that $\Omega$ is convex with respect to $g_{\textup{E}}$ and whose  inradius is bounded below by some positive function of the diameter. Then there exist $\overline \rho$ and $\overline V$ depending only on the conformal factor and $D$ such that $\eqref{ineq: gap-estimate-for-conformally-flat}$ holds.
\end{cor}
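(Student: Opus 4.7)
The plan is to revisit the proof of Theorem \ref{thm: Gap-Estimate-for-conformally-flat-manifolds} and identify the single point where the dependence on $\lambda_1(\Omega)$ appears, then remove it using the inradius hypothesis. Taking $\overline\rho(s)=\sigma_{\max}\tfrac{s^2}{2}+C$ exactly as in that proof, the only obstacle to $\overline\rho$ and $\overline V$ being functions of $D$ and $\varphi$ alone was that $\overline V$ was set, in analogy with the horoconvex case in Section \ref{Section: horoconvex-estimates}, so that
\[
    \overline V \;\geq\; \max_{s\in[0,D/2]}\overline\rho(s)\,\bigl(\lambda_1(\Omega,\Delta_{g_{\textup{E}}},e^{2\varphi})-\overline\lambda(\overline\rho,0)\bigr),
\]
in order to enforce condition \eqref{condition-2}. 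Since $\overline\rho$ and $\overline\lambda(\overline\rho,0)$ already depend only on $D$ and the conformal factor, the task reduces to giving an upper bound for $\lambda_1(\Omega,\Delta_{g_{\textup{E}}},e^{2\varphi})$ purely in terms of $D$ and $\varphi$.

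The second step is to use domain monotonicity. If $B_{r_\Omega}\subset\Omega$ is an inscribed Euclidean ball of maximal radius, extending test functions by zero gives
\[
    \lambda_1(\Omega,\Delta_{g_{\textup{E}}},e^{2\varphi})\;\leq\;\lambda_1(B_{r_\Omega},\Delta_{g_{\textup{E}}},e^{2\varphi}).
\]
The Rayleigh quotient immediately yields
\[
    \lambda_1(B_{r_\Omega},\Delta_{g_{\textup{E}}},e^{2\varphi})\;\leq\;\frac{1}{\min_{B_{r_\Omega}}e^{2\varphi}}\,\lambda_1(B_{r_\Omega},\Delta_{g_{\textup{E}}})\;=\;\frac{j_{N/2-1,1}^{\,2}}{r_\Omega^{2}\,\min_{B_{r_\Omega}}e^{2\varphi}},
\]
where $j_{N/2-1,1}$ is the first positive zero of the usual Bessel function. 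By hypothesis $r_\Omega\geq h(D)$ for some positive function $h$, so the right-hand side is bounded above by $j_{N/2-1,1}^{\,2}\big/\bigl(h(D)^{2}\min_{\Omega}e^{2\varphi}\bigr)$, an explicit function of $D$ and the conformal factor.

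Finally, one substitutes this bound into the definition of $\overline V$ and runs the argument of the proof of Theorem \ref{thm: Gap-Estimate-for-conformally-flat-manifolds} verbatim: the conditions of Theorem \ref{main-thm} are satisfied with $\overline\rho,\overline V$ depending only on $D$ and $\varphi$, so Proposition \ref{thm: super-log-concavity-general} gives the log-concavity estimate, Proposition \ref{Gap-Comparison-Improved} translates it into the spectral gap inequality, and Proposition \ref{prop: conformally-flat-gap-scalarcurvature} absorbs the scalar curvature oscillation term. Since $R_g$ is determined by $\varphi$ via \eqref{eqn: Scalar-curvature-under-conformal-deformation}, $\textup{osc}_\Omega(R_g)$ is likewise controlled by the conformal factor, so \eqref{ineq: gap-estimate-for-conformally-flat} holds with bounds of the required form. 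The only genuinely new ingredient is the inradius-to-eigenvalue bound above, and no other step is a serious obstacle once the horoconvex template is in place.
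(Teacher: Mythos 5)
Your proposal is correct and follows essentially the same route the paper intends: the only $\lambda_1$-dependence sits in the choice of $\overline V$ enforcing \eqref{condition-2}, and you eliminate it by bounding $\lambda_1(\Omega,\Delta_{g_{\textup{E}}},e^{2\varphi})$ from above via the inscribed ball and the inradius hypothesis, exactly mirroring the inradius-to-eigenvalue step (Borisenko plus the hyperbolic ball bound) used in the horoconvex proof, here with the Euclidean Bessel-zero bound and a $\min e^{2\varphi}$ factor. The remaining steps are indeed verbatim from the proof of Theorem \ref{thm: Gap-Estimate-for-conformally-flat-manifolds}.
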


When $\rho$ is concave, we obtain a gap estimate which is independent of the inradius (c.f. Theorem 4.1 of \cite{khan2024concavity}). In particular, we obtain the estimate
    \begin{align}\label{Estimate-for-rho-concave}
        \Gamma(\Omega) \geq \frac{\min \exp(2\varphi)}{\max\exp (2\varphi)}\frac{3\pi^2}{D^2}-\frac{(N-2)}{4(N-1)}\textup{osc}_\Omega(R_g).
    \end{align}
    
\subsubsection{A worked example: $\mathbb S^1 \times \mathbb S^{N-1}$}
\label{gaps of S^1 times S^2}

As a demonstration of how this theorem can be applied, we consider $M = \mathbb S^1\times \mathbb S^N$ with its standard metric, which is conformally-flat. When $N$ is even, we can also consider Hopf manifolds. 

For domains $\Omega \subset \mathbb S^1 \times \mathbb S^{N-1}$, it will be convenient to consider the affine universal cover $\mathbb{R}^N \backslash 0$ with the conformal metric $g_{\mathbb S^1 \times \mathbb S^{N-1}} = \frac{1}{r^2} g_{\textup{E}}$, where $r = \|x\|^2.$  
When $N$ is even, we can consider $\mathbb{R}^N \backslash 0$ as $\mathbb{C}^{N/2} \backslash 0$ and construct Hopf manifolds as the quotient of this cover by the action of $\mathbb{Z}$ generated by an holomorphic contraction $z \to \alpha z$ where $\alpha \in \mathbb{C}$ satisfies $|\alpha|<1$. The class of domains $\Omega \subset \mathbb{R}^N \backslash 0$ that we consider are those which are convex and compact with respect to the affine structure.

Then, a straightforward computation shows that the eigenvalues of 
the Hessian of $\frac{1}{r^2}$ are $\frac{-2}{r^4}$, repeated with multiplicity $N-1$ and $\frac{6}{r^4}$ with multiplicity $1$. Furthermore, since $\mathbb{S}^1 \times \mathbb{S}^{N-1}$ has constant scalar curvature, bounding the gap reduces to bounding the gap of \eqref{eqn: only-weighted-no-potential}.
As before, we construct a modulus of concavity $\overline \rho $ given by 
    \begin{align*}
        \overline \rho(s) = \frac{6}{\inf_{x \in \Omega} r^4} \tfrac{s^2}{2}+C,
    \end{align*}
    where $C$ is a positive constant which we can take to be $\inf_{x \in \Omega} \frac{1}{r^2}$ %

    In order to obtain a gap estimate, we must verify that $\overline \lambda(\overline \rho, \overline V) \geq \lambda_1(\Omega,  \rho),$ for $\rho = \tfrac{1}{r^2}.$ To do so, we choose $\overline V$ large enough so that the desired eigenvalue comparison holds. Doing so, we will get an upper-bound on the log-concavity of $u_1$, and thus a lower bound on the fundamental gap of the domain in terms
    of the diameter and the principle eigenvalue of the domain.

\subsection{Gap estimates for small horoconvex domains}

\label{Small horoconvex domains estimates}

In \cite{khan2024concavity}, we considered domains which are convex with respect to a particular \emph{spherical geometry}, which is a more restrictive class of domains, though general enough to include horoconvex domains up to a certain diameter. We can again use positively curved geometry as the reference, and doing so well yield stronger gap estimates compared to our earlier work when the diameter is small.

To do this, we again consider the Poincar\'e disk model $(\mathbb H^N, g_{\mathbb H^N}) = (B_1(0), \tfrac{4}{(1-r^2)^2}g_{\textup{E}})$ for the hyperbolic space and consider a sphere of radius $R$ (i.e. $K = 1/R^2).$ Using stereographic projection map from the disk model to the sphere, we can relate the hyperbolic metric to a spherical one by the conformal change $g_{\mathbb H^N} = \tfrac{(R^2+r^2)^2}{R^4(1-r^2)^2}g_{\mathbb S^N_K}$ on the ball $B_R(0) \subset B_1(0).$ For diameters $D$ small enough, one can repeat the argument of the proof of Theorem \ref{thm: small-deformation-of-sphere} to obtain the estimate
  \begin{align}\label{Gap-Estimates-for-small-horoconvex-domains}
         \Gamma(\Omega) \geq  \frac{(\min_\Omega \rho-\sigma_{\max}\tfrac{D^2}{8})^3}{\min_\Omega \rho(\max_\Omega \rho)^2}\frac{3\pi^2}{D^2}-\frac{N(N-2)}{4}\textup{osc}_\Omega(\rho^{-1}),
    \end{align}
    where $\rho = \exp(2\varphi)=\tfrac{(R^2+\|x\|^2)^2}{R^4(1-\|x\|^2)^2}$ and $\sigma_{\max}$ chosen as in \eqref{choice-of-sigma-max}. By taking the diameter of $\Omega$ small and choosing the radius $R>0$ small enough, the modulus of concavity $\overline \rho$ of $\rho$ will satisfy the assumptions of Theorem \ref{main-thm}.
In particular, we can construct a modulus of concavity $\overline \rho $ of $\rho$ given by 
    \begin{align*}
        \overline \rho(s) = \sigma_{\max} \tfrac{s^2}{2}+C.
    \end{align*}
  In order to apply the comparison, we can choose $D>0$ small enough and $K>0$ large enough (i.e. $R$ small enough) such that for $C = \min _\Omega \rho-\tfrac{D^2}{8}\sigma_{\max}>0,$ $\overline \rho$ satisfies 
    \begin{align*}
        \overline \rho \leq \min \exp(2\varphi) \quad \& \quad \overline \rho' \leq 2\tn_K \overline \rho.
    \end{align*}
Thus \eqref{Gap-Estimates-for-small-horoconvex-domains} follows.

\subsection{Spectral Gap Estimates on Surfaces of Positive Curvature}

In \cite{andrews2014moduli}, Andrews posed the following question.

\begin{quest}[Page 19 \cite{andrews2014moduli}]
   [C]an one expect a useful lower bound on the gap on
a strongly convex Riemannian manifold by controlling curvature, or is it necessary to control
higher derivatives of curvature as well?
\end{quest}

Since this question was first posed, there have been a number of developments. For instance, the curvature must be non-negative in order to establish gap estimates for Levi-Civita convex domains. However, the situation in higher-dimensional manifolds remains open, even for well-understood spaces like $\mathbb{CP}^n$ (see \cite{aryan2024concavity} for some recent progress on this topic).

Nonetheless, it is possible to obtain gap estimates for surfaces whose curvature is positive and controlled in a $C^2$ sense (see Theorem 1.1 of \cite{surfacepaper1, khan2023modulus}). But this leaves unresolved the question of whether it is possible to obtain gap estimates when the curvature is simply positive. Although it involves a different notion of ``convexity" for domains, the results in this paper give evidence that positive curvature suffices to establish gap estimates.

Indeed, suppose that $(M^2,g)$ is a surface of Gaussian curvature $\kappa$. From the uniformization theorem, we can locally write $g = e^{2\varphi }g_{\textup{E}}$ where $\varphi$ is a function satisfying \eqref{eqn: Scalar-curvature-under-conformal-deformation}, which simplifies to \[\kappa=-2(N-1) e^{-2\varphi} \Delta_{\textup{E}}\varphi.\] If $e^{2\varphi}$ is concave with respect to $g_{\textup{E}},$ i.e. if 
\begin{align*}
    \Hess \varphi +2d\varphi \otimes d\varphi \leq 0,
\end{align*} then we can let $\overline \rho \equiv 1$ which is a modulus of concavity for $\rho = e^{2\varphi}$. In that case, for any $\Omega\subset M$ that is convex with respect to $g_{\textup{E}}$, we find that 
\begin{align}\label{concavely-curved surfaces}
    \Gamma(\Omega) \geq \frac{\min \exp(2\varphi)}{\max \exp(2\varphi)}\frac{3\pi^2}{D^2}.
\end{align}
So we have that whenever the conformal factor is concave, one can prove strong gap estimates. Note that the curvature could be positive even if the conformal factor is not concave (but $\Delta_{\textup E} \varphi <0$). For these situations, our approach leads us to prove lower bounds on the fundamental gap involving the first eigenvalue (as above) but not solely the diameter.

\subsection{Fundamental gaps on conformal deformations of round spheres}
Another application of Proposition \ref{thm: super-log-concavity-general} is to obtain fundamental gap estimates of domains in small conformal deformations of round spheres (c.f. Corollary 1.4 of \cite{khan2024concavity}\footnote{The primary difference between the estimates in our earlier work is that the leading term in the lower bound here is roughly three times as large.}). 

In this section we study the case where $M$ is the sphere $\mathbb S^N_K$ of constant sectional curvature $K >0$ and $\Omega \subset \mathbb S^N_K$ is convex with respect to the round geometry. Then we consider the conformal metric $g = e^{2\varphi}g_{\mathbb S^N_K}$. As before, we let 
\begin{align}\label{choice-of-sigma-max}
    \sigma_{\max } = \max\{ 0, \max \sigma ( \Hess _{g_{\mathbb S^N_K}}e^{2\varphi})\}
\end{align}
which is either $0$ if $e^{2\varphi}$ is concave or the largest eigenvalue of the hessian of $e^{2\varphi},$ otherwise.
\begin{thm}\label{thm: small-deformation-of-sphere}
    There exists $\varepsilon = \varepsilon(N,K)>0$ such that whenever $\sigma_{\max}< \varepsilon,$ one has that 
    \begin{align}
        \Gamma(\Omega) \geq \frac{(\min_\Omega \rho-\sigma_{\max}\tfrac{D^2}{8})^3}{\min_\Omega \rho(\max_\Omega \rho)^2}\frac{3\pi^2}{D^2}-\frac{(N-2)}{4(N-1)}\textup{osc}_\Omega (R_g)-\frac{N(N-2)}{4}\textup{osc}_\Omega (\exp(-2\varphi)).
    \end{align}
\end{thm}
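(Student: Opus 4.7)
The plan is to apply Theorem \ref{main-thm} with the round sphere $(\mathbb{S}^N_K, g_{\mathbb{S}^N_K})$ as the reference geometry, mirroring the scheme used for horoconvex domains in Theorem \ref{The horoconvexity theorem} but now with a curved base metric. First, I would perform the conformal reduction: the substitution $u = \psi e^{(N-2)\varphi/2}$ sends the Dirichlet eigenvalue problem for $\Delta_g$ to the weighted Schr\"odinger problem \eqref{eqn: Conformal eigenfunction equation, Schrodinger form} on $(\Omega, g_{\mathbb{S}^N_K})$ with weight $\rho = e^{2\varphi}$ and potential $V$ from \eqref{eqn: V and scalar-curvature}. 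Since $R_{g_{\mathbb{S}^N_K}}$ is constant, $V$ decomposes as a constant piece plus a term proportional to $e^{2\varphi}R_g$. Running a Rayleigh quotient argument analogous to Proposition \ref{prop: conformally-flat-gap-scalarcurvature} on each summand yields the gap inequality
\begin{equation*}
    \Gamma(\Omega, \Delta_g) \geq \Gamma(\Omega, \Delta_{g_{\mathbb{S}^N_K}}, \rho, V) - \tfrac{N-2}{4(N-1)}\textup{osc}_\Omega(R_g) - \tfrac{N(N-2)}{4}\textup{osc}_\Omega(e^{-2\varphi}),
\end{equation*}
the second correction arising because the constant part of $V$ does not cancel uniformly against a nonconstant weight in the Rayleigh quotient.

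Next I would construct the moduli required by Theorem \ref{main-thm}. Since $\sigma_{\max}$ bounds the Hessian eigenvalues of $\rho$ from above, $\overline{\rho}(s) = \sigma_{\max}\tfrac{s^2}{2} + C$ is a modulus of concavity for any $C > 0$; I take $C = \min_\Omega \rho - \sigma_{\max}\tfrac{D^2}{8}$, which makes $\max_{[0,D/2]}\overline{\rho} = \min_\Omega \rho$ and so enforces the first half of \eqref{Additional assumption for K>0}, provided $\sigma_{\max}$ is small enough to keep $C$ positive. For the potential I take $\overline{V}$ to be a constant dominating $\max_\Omega V$, which is automatically a modulus of convexity and satisfies the second half of \eqref{Additional assumption for K>0}.

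The main obstacle is verifying the curvature compatibility \eqref{condition-2}. A direct computation gives
\begin{equation*}
    [\overline{\rho}(s)\cs_K^2(s)]' = \cs_K(s)\bigl[\overline{\rho}'(s)\cs_K(s) - 2\sqrt{K}\overline{\rho}(s)\sn_K(s)\bigr],
\end{equation*}
which is negative on $(0, D/2]$ once $\sigma_{\max}$ is small; hence \eqref{condition-2} reduces to the eigenvalue comparison $\overline{\lambda}(\overline{\rho},\overline{V}) \leq \lambda_1(\Omega, \rho, V)$. Lemma \ref{thm: first-evaule-comparison} bounds $\overline{\lambda}$ above in terms of $D$, $\overline{V}$, and the extremes of $\overline{\rho}$, while a standard Rayleigh lower bound controls $\lambda_1$ from below in terms of the diameter and the conformal factor. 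Balancing these fixes $\overline{V}$ and imposes the smallness condition $\sigma_{\max} < \varepsilon(N,K)$; this is precisely what defines $\varepsilon(N,K)$.

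Once the hypotheses of Theorem \ref{main-thm} are in force, it produces $\Gamma(\Omega,\rho,V) \geq \tfrac{\min \overline{\rho}}{\max_\Omega \rho}\overline{\Gamma}(\overline{\rho},\overline{V})$, and Proposition \ref{One-dimensional-fundamental-gap-theorem} bounds $\overline{\Gamma}$ below by the $\tfrac{3\pi^2}{D^2}$ term with the appropriate ratio involving $\min \overline{\rho}$ and $\max \overline{\rho}$, together with a lower-order correction linear in $\overline{V}$ that is absorbed into the oscillation terms. Substituting $\min \overline{\rho} = \min_\Omega \rho - \sigma_{\max}D^2/8$ and $\max \overline{\rho} = \min_\Omega \rho$, and combining with the two oscillation corrections from the first paragraph, produces the asserted estimate.
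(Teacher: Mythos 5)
Your overall architecture matches the paper's: the conformal reduction producing both oscillation corrections, the quadratic modulus $\overline\rho(s)=\sigma_{\max}\tfrac{s^2}{2}+C$ with $C=\min_\Omega\rho-\sigma_{\max}\tfrac{D^2}{8}$ so that $\max\overline\rho=\min_\Omega\rho$, the observation that $[\overline\rho\cs_K^2]'\le 0$ for small $\sigma_{\max}$ reduces \eqref{condition-2} to $\overline\lambda\le\lambda_1$, and the final assembly via Theorem \ref{main-thm} and Proposition \ref{One-dimensional-fundamental-gap-theorem}. However, the mechanism you propose for securing $\overline\lambda(\overline\rho,\overline V)\le\lambda_1(\Omega,\rho)$ is backwards. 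By Lemma \ref{thm: first-evaule-comparison}, enlarging a constant $\overline V$ \emph{raises} $\overline\lambda$, so ``balancing by choosing $\overline V$'' only makes the required inequality harder to satisfy; the large-$\overline V$ device belongs to the horoconvex case, where $[\overline\rho\cs_K^2]'\ge 0$ (there $K=0$ and $\overline\rho'\ge 0$) and one needs the \emph{opposite} inequality $\overline\lambda\ge\lambda_1$. In the spherical setting the paper instead takes $\overline V\equiv 0$ and poses the one-dimensional model on a \emph{longer} interval $[-L/2,L/2]$ with $L=\sqrt{\max_\Omega\rho/\min\overline\rho}\,D_{\mathbb S^N_K}$, so that $\overline\lambda\le\tfrac{\pi^2}{\min\overline\rho\,L^2}=\tfrac{\pi^2}{\max_\Omega\rho\,D_{\mathbb S^N_K}^2}\le\lambda_1(\Omega,\rho)$ by Ling's estimate, with Proposition \ref{Ricatti-comparison-prop} supplying the Riccati comparison. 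This interval-stretching is the idea your proposal is missing; it is also what generates the cubic power of $\min\overline\rho$ and the extra $\min_\Omega\rho$ in the denominator once $D_{\mathbb S^N_K}$ is converted back to $D$ via $D_{\mathbb S^N_K}\le D/\min e^{\varphi}$ — powers that your final substitution does not reproduce.

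Two smaller points. First, your displayed reduction keeps the potential $V$ in the comparison problem \emph{and} subtracts both oscillation corrections, which double-counts: those corrections are precisely the price of discarding $V$, so the reduced problem must be the pure weighted one, $\Gamma(\Omega,\Delta_{g_{\mathbb S^N_K}},e^{2\varphi})$. Relatedly, a constant $\overline V$ is a modulus of convexity only when the potential is (two-point) convex — it works here exactly because the reduced potential is zero, not ``automatically.'' Second, if a nonzero constant $\overline V$ were carried into Proposition \ref{One-dimensional-fundamental-gap-theorem}, the resulting correction $\tfrac{\max\overline V}{\min\overline\rho}-\tfrac{\min\overline V}{\max\overline\rho}=\overline V\bigl(\tfrac{1}{\min\overline\rho}-\tfrac{1}{\max\overline\rho}\bigr)$ does not vanish and cannot be ``absorbed into the oscillation terms,'' which are already fully spoken for by the conformal reduction.
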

\begin{proof}
    Let $u$ be the $g$-eigenfunction on $\Omega.$ Then, applying a conformal change procedure as before, we have that $ue^{\tfrac{N-2}{2}\varphi}$ satisfies the Schr\"odinger equation \eqref{eqn: Conformal eigenfunction equation, Schrodinger form}, where   $V = - \frac{N-2}{4(N-1)}e^{2\varphi}R_{\tilde g}+\frac{N-2}{4(N-1)}R_g.$ Thus, from the Raleigh quotient, we get that 
    \begin{align*}
        \Gamma(\Omega, \Delta_{g}) \geq \Gamma(\Omega, \Delta_{g_{\mathbb S^N_K}}, e^{2\varphi})-\frac{(N-2)}{4(N-1)}\textup{osc}_\Omega (R_g)-\frac{N(N-2)}{4}\textup{osc}_\Omega(\exp(-2\varphi)).
    \end{align*}
    Therefore, it suffices to estimate $\Gamma(\Omega, \Delta_{g_{\mathbb S^N_K}}, e^{2\varphi}).$ As before, we construct a modulus of concavity $\overline \rho $ given by 
    \begin{align*}
        \overline \rho(s) = \sigma_{\max} \tfrac{s^2}{2}+C,
    \end{align*}
    where $\sigma_{\max}$ is chosen as in the proof of Theorem \ref{thm: Gap-Estimate-for-conformally-flat-manifolds}. Note that in order to apply the comparison, we can choose $\varepsilon>0$ small enough such that for $C = \min _\Omega \rho-\tfrac{D^2}{8}\sigma_{\max}>0,$ $\overline \rho$ satisfies 
    \begin{align*}
        \overline \rho \leq \min \exp(2\varphi) \quad \& \quad \overline \rho' \leq 2\tn_K \overline \rho.
    \end{align*}
   To finish the proof, we need to verify that $\overline \lambda(\overline \rho) \leq \lambda_1(\Omega,  \rho),$ for $\rho = e^{2\varphi}.$ To do so, note that 
    \begin{align*}
        \lambda_1(\Omega, e^{2\varphi}) = \inf _{f \in H^1_0(\Omega) }\frac{\int_\Omega |\nabla f|^2\, dV_{g_{\mathbb S^N_K}}}{\int_\Omega \rho f^2\, dV_{g_{\mathbb S^N_K}}}\geq \frac{1}{\max _\Omega \rho}\lambda_1(\Omega) > \frac{1}{\max_ \Omega \rho}\frac{\pi^2}{D_{\mathbb S^N_K}^2},
    \end{align*}
    by Ling's estimate \cite{ling2006lower}, where $D_{\mathbb{S}^N_K}$ denotes the diameter with respect to $g_{\mathbb S^N_K}$. We now consider the problem \eqref{eqn: one-dimensional-model-weighted} on the interval $[-L/2,L/2]$ where we choose $ L = \tfrac{\sqrt{\max_\Omega}\rho}{\sqrt{\min \overline \rho}}D_{\mathbb S^N_K},$ so that 
    \begin{align*}
        \overline\lambda(\rho, L) \leq \frac{1}{\min \overline \rho}\frac{\pi^2}{L^2} \leq \lambda_1(\Omega, \rho). 
    \end{align*}
    We thus conclude from Proposition \ref{thm: super-log-concavity-general} that $(\log \overline \varphi_1)'+\tfrac{N-1}{2}\tn_K$ is a modulus of concavity. Thus by Theorem \ref{Gap-Comparison-Improved}, we conclude that 
    \begin{align*}
    \Gamma(\Omega, \Delta_{g_{\mathbb S^N_K}}, \rho) \geq \frac{\min \overline \rho}{\max_\Omega \rho}\Gamma(\overline \rho, L).
    \end{align*}
    In view of Theorem \ref{One-dimensional-fundamental-gap-theorem}, we conclude that 
    \begin{align*}
        \Gamma(\Omega, \Delta_{g_{\mathbb S^N_K}}, \rho) \geq \frac{(\min \overline \rho)^2}{(\max \overline \rho)^2}\frac{1}{\max_\Omega \rho}\frac{3\pi^2}{L^2} =  \frac{(\min \overline \rho)^3}{(\max \overline \rho)^2}\frac{1}{(\max_\Omega \rho)^2}\frac{3\pi^2}{D_{\mathbb S^N_K}^2}.
    \end{align*}
    Finally, since  $D_{\mathbb S^N_K} \leq  D/\min \exp(\varphi),$ we conclude 
    \begin{align*}
         \Gamma(\Omega, \Delta_{g_{\mathbb S^N_K}}, \rho) \geq  \frac{(\min_\Omega \rho-\sigma_{\max}\tfrac{D^2}{8})^3}{\min_\Omega \rho(\max_\Omega \rho)^2}\frac{3\pi^2}{D^2}.
    \end{align*}
\end{proof}

\bibliography{references}
\bibliographystyle{alpha}

\appendix

\section{Rectangles in the disk model of hyperbolic space}

\label{We need the inradius}

In this appendix, we construct domains in the Poincar\'e disk model of hyperbolic space which are convex with respect to this geometry but have arbitrarily small fundamental gap. This shows that our assumption that there is a lower bound on the inradius is necessary.

We define the domain $\Omega$ to be the Euclidean rectangle with vertices $(\pm L, \pm r)$ in the disk. Here, $L$ is some fixed value and $r$ will be very small.

For the most part, the proof is the same as the proof of Theorem 1 in \cite{khan2022negative}, with two differences.
\begin{enumerate}
    \item We can construct the domain to be symmetric, which eliminates the need for a continuity family at the end of the proof.
    \item In order to create the neck, we cannot use the growth of Jacobi fields since the domain is convex with respect to the Euclidean geometry, not the underlying hyperbolic geometry. However, we will make use of the fact that the conformal factor exceeds one near the left and right hand sides of the rectangle to replicate this effect.
\end{enumerate}

For $r$ small, we consider three smaller rectangles contained in $\Omega$.
\begin{eqnarray*}
    \Omega_{Left} &=& \{ (x,y) ~|~ -L<x<-L/2,  -r<y<r\} \\
    \Omega_{Neck} &=& \{ (x,y) ~|~ -L/4<x<L/4,  -r<y<r\} \\
    \Omega_{Right} &=& \{ (x,y) ~|~ L/2<x<L,  -r<y<r\} \\
\end{eqnarray*}

Now, we compute the heights of each of these rectangles with respect to the underlying hyperbolic metric. In other words, we consider the distance from the top of each rectangle to the bottom. Note that this will not be a constant value since the metric is non-Euclidean and will depend on the $x$-value. Then, a computation of the hyperbolic distances shows that for all points in $\Omega_{Left}$ and $\Omega_{Right}$, the height of the domain exceeds $\arcsinh\left( \frac{r}{4-L^2-4r^2}\right)$. Meanwhile, the height of points in $\Omega_{Neck}$ does not exceed
$\arcsinh\left(\frac{32r}{16-L^2-16r^2}\right)$. We then consider the ratio of these bounds to find the following:
\begin{equation}
   \textrm{height}(\Omega_{Left}) >  \frac{\arcsinh\left( \frac{r}{4-L^2-4r^2}\right)}{\arcsinh\left(\frac{32r}{16-L^2-16r^2}\right)} \textrm{height}(\Omega_{Neck}).
\end{equation}

Taking the limit as $r$ goes to zero, L'H\'opital's rule shows that
\begin{equation}
   \frac{\arcsinh\left( \frac{r}{4-L^2-4r^2}\right)}{\arcsinh\left(\frac{32r}{16-L^2-16r^2}\right)} \to \frac{ 16- L^2}{16-4L^2}.
\end{equation}
Therefore, we can find $r$ small enough so that
\begin{equation}
   \textrm{height}(\Omega_{Left}) >  \frac{ 16- L^2}{16-2L^2} \textrm{height}(\Omega_{Neck}).
\end{equation}
Fixing $L$ positive, $\frac{ 16- L^2}{16-2L^2} >1$, which suffices to repeat the proof from \cite{khan2022negative} verbatim. In particular, we can bound the eigenvalue of $\Omega$ from below by the eigenvalue of $\Omega_{Left}$, which is at most $\arcsinh^{-2}\left( \frac{r}{4-L^2-4r^2}\right) +l.o.t.$. On the other hand,  the eigenvalue of $\Omega_{Neck}$ exceeds $\arcsinh^{-2}\left(\frac{32r}{16-L^2-16r^2}\right)$, so the eigenvalue of the entire domain is much smaller (by a multiplicative factor) compared to the eigenvalue of the middle of the domain. Repeating the same doubling estimate from \cite{khan2022negative}, this forces the principle eigenfunction to become very small through $\Omega_{Neck}$ (roughly $O(\exp(-cr^{-1})$), and we can multiply the principle eigenfunction by a cut-off function to obtain a second function whose Rayleigh quotient is very similar to $\lambda_1$. Furthermore, since the domain and geometry are symmetric, so long as we choose this cut-off function to be odd, the resulting function is automatically orthogonal to the principle eigenfunction (eliminating the need for a continuity argument). The difference between the Rayleigh quotients of these two functions bounds the fundamental gap, so the fundamental gap of these rectangles gets arbitrarily small as the inradius (i.e. $r$) goes to zero.

\end{document}